\documentclass[12pt,a4paper]{article}

\usepackage{amssymb}
\usepackage{amsmath}
\usepackage{amsthm}
\usepackage{graphicx}
\usepackage{xcolor}
\usepackage[mathscr]{euscript}
\usepackage[pdftex,hidelinks,bookmarks=false]{hyperref}
\usepackage{tikzsymbols}
\usepackage{tikz}
\usepackage{float}
\usepackage{hyperref}

\newtheorem{thm}{Theorem}
\newtheorem{lema}[thm]{Lemma}
\newtheorem{prop}[thm]{Proposition}

\newtheorem{cor}[thm]{Corollary}
\newtheorem{stmt}[thm]{Statement}
\newtheorem*{bandrepeat}{Lemma \ref{band}}

\theoremstyle{definition}
\newtheorem{defn}[thm]{Definition}
\newtheorem{question}[thm]{Question}

\begin{document}

\title{A Density Method for Bounding Heesch Numbers in $\mathbb{E}^3$}
\author{Aleksa D\v{z}uklevski \\
{\small\itshape Department of Applied Mathematics, Faculty of Mathematics and Physics,} \\
{\small\itshape Charles University, Czech Republic} \\[0.4em]
{\small\ttfamily aleksa@kam.mff.cuni.cz}}
\maketitle
\begin{abstract}
    In this paper, we provide a novel approach to limiting Heesch numbers of a specific family of tiles. In particular, we show how a certain family of three--dimensional prisms has a finite Heesch number. In so doing, we initiate a structural study of ratio $q$ tilings by two prototiles, in which, loosely speaking, one tile appears $q$ times more often than the other. The method works as follows: we take planar intersections with coronas around the prism in two perpendicular directions and compare the frequencies of the resulting prototiles in those patches. If every tiling by these prototiles has limiting ratio at least $q >1$, then sufficiently large coronas of the prism would require incompatible tile counts in the two directions. We also provide some computational results and exhibit $6$ infinite families of notched prisms that have finite Heesch numbers greater or equal to $2$. Finally, we  provide some negative computational results about the non--existence of certain prisms with given Heesch numbers.
\end{abstract}

\par\noindent\textbf{Keywords:} Heesch number; tilings; three-dimensional tilings; tile frequencies; coronas.

\par\noindent\textbf{2020 Mathematics Subject Classification:}
Primary 52C22; Secondary 52C20, 05B45.

% \subsection{To Do}
%     Could I give a numeric value on the upper bound?

%     dodaj u notation ili preliminaries da je min area a max A, da je diameter D. dodaj da je t(patch) broj plocica u patchu

%     dodaj i keywords i klasifikaciju

%     yappuj i kako je dosta plocica u dve korone i poredjenje sa ravni i sesticom

%     make sure that the || roudn the union some lines before isn't that long

%     make statements and lemmas that are reeated have the same number

%     nadji novi font za graph balls iz statement 3 mislim

%     dodaj komentar za dobijanje ovih protoskupova koji su ratio q. i spomeni da zapravo ne znamo primere yikes. mogu se dobiti dibalansiranima ali za njih vec imamo dokaz pa nije toliko zanimljivo. medjutim za generalizaciju iz teoreme 7 nista nismo znali dosad.

\section{Introduction}

Heesch's tiling problem, while one of the most prominent open problems in the mathematical theory of tilings, concerns itself with tiles that \textit{do not} tile the plane. In those cases, we are interested in measuring how close to a tiling a given tile can get, and as one measure of this notion introduce the Heesch number of a tile. Loosely speaking, it measures how many layers we can create around a tile. Crucial, therefore, is the concept of a \textit{corona} or layers around the tile. Formally, we define it as follows:

\begin{defn}
    We say that a tile $T$ can be \emph{surrounded $n$ times} if there exist finite collections of figures $\mathcal C_1,\mathcal C_2,\dots,\mathcal C_n$ such that:
\begin{itemize}
\item for each $i$, $1\leqslant i\leqslant n$, each figure from $\mathcal C_i$ is congruent to $T$;
\item every two different figures from $\{T\}\cup\bigcup_{i=1}^n\mathcal C_i$ have disjoint interiors;
\item for each $i$, $1\leqslant i\leqslant n$, each figure from $\mathcal C_i$ has a common boundary point with some figure from $\mathcal C_{i-1}$ (where, by convention, we let $\mathcal C_0=\{T\}$);
\item for each $i$, $1\leqslant i\leqslant n$, $\bigcup\left(\bigcup_{j=0}^i{\mathcal C_j}\right)$ contains $\bigcup\left(\bigcup_{j=0}^{i-1}{\mathcal C_j}\right)$ completely in its interior.
\end{itemize}

The collection $\mathcal C_i$ is called the \emph{$i^{\text{th}}$ corona}.

\end{defn}

An intuitive way of thinking about the $i^{th}$ corona is to think of it as the $i^{th}$ layer one can create around the tile. Since we are trying to make as big a patch of the tiling as possible, we want to locally mimic the restrictions that are required in a tiling (that is, that the interiors of distinct tiles are disjoined and that tiles exactly cover the whole plane). We also want to keep the coronas minimal, in the sense that we are not using any tiles that are not required in their formation. These ideas are mirrored in the previous definition; see Figure \ref{hes6} for an example. Observe also that the previous definition easily generalizes to sets of tiles, usually called \textit{protosets}.

It is worth noting that coronas are sometimes required in the literature to also be topological disks. This turns out to produce pathological examples of protosets with finite Heesch numbers that tile the plane \cite{basicdzuksliv2023}, and recently the same behavior has been reported by Ba\v si\' c, Kaplan and Mann \cite{tilinglist} for single tiles. We thus consider the stronger notion of a corona, in which we allow holes that could potentially be filled in, or at least surrounded at a later stage. All the proofs in the paper will work with this stronger notion of a corona, and will therefore be valid also for the weaker one.

We move on to define the Heesch number of a tile.

\begin{defn}
    The Heesch number of a tile $T$ is the biggest number of coronas that one can create around $T.$ If no such number exist, we say that the Heesch number of the tile is infinite.
\end{defn}

It is important to note that a Heesch number of a tile is infinite if and only if it tiles the plane. The first direction of this implication relies on the Extension theorem of Gr\"{u}nbaum and Shephard and is far from obvious, albeit intuitively easy to believe.

Heesch's tiling problem concerns itself with the following question, posed by Heesch in 1968 \cite{heesch1968}:

\begin{question}[Heesch's tiling problem]
    Is there an upper bound to all the finite values that can appear as Heesch numbers of tiles?
\end{question}

The record holders for the planar case, all obtaining Heesch number 6, are a family of tiles first discovered by Dave Smith \cite{smith2019}, as was later established by Ba\v si\' c and Bakker \cite{basic6, tilinglist}, together with an additional tile found in a search backed by AI; see Figure \ref{hes6}. In the same search, Bakker announced that another tile found by Smith has Heesch number 6.

\begin{figure}[ht]
  \center
   \includegraphics [width = 0.9\linewidth]{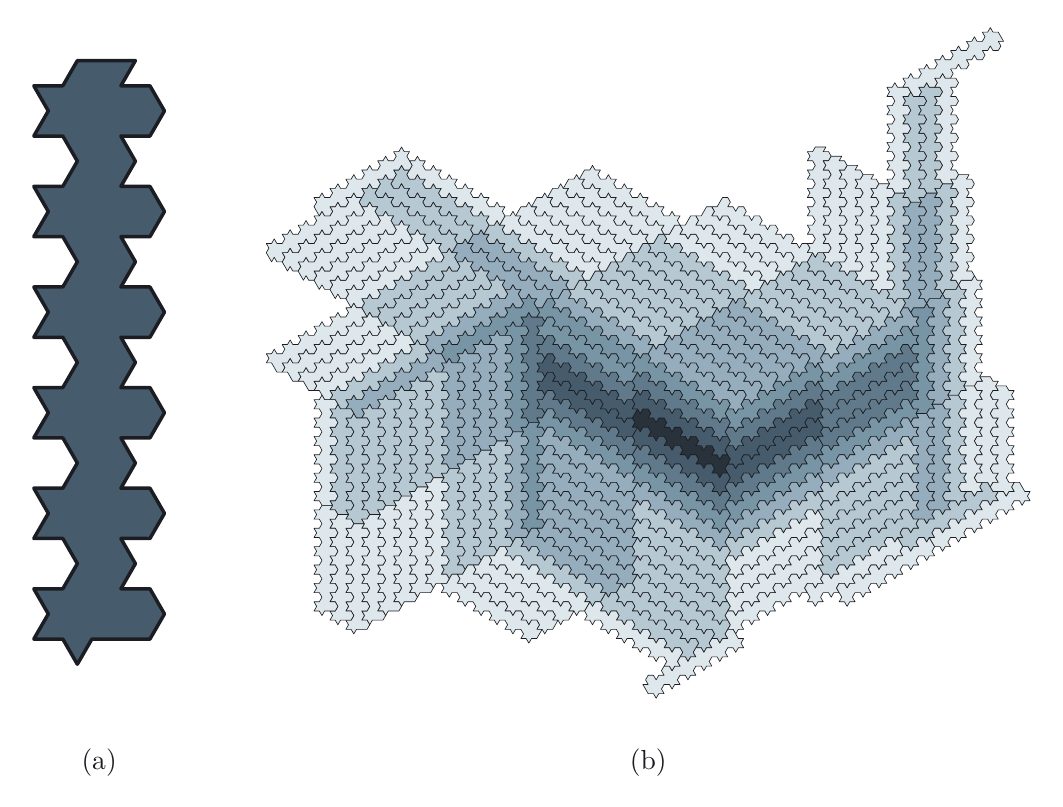} 
   \caption{A tile with Heesch number $6$ (a), together with its surrounding (b). Courtesy of Anton Bakker.}
   \label{hes6}
\end{figure}

It is often the case in tilings, and in other areas of mathematics and human endeavors in science generally, that relaxing one of the criteria of the problem leads to an easier version of that problem, which can sometimes yield to existing techniques or approaches, or new ones that would not be sufficiently strong for the original problem. One such weakening of the original Heesch's problem is to consider higher dimensional spaces and in particular the lowest example of $d=3.$ Additional dimensions supply more space for manoeuvring, as is seen, for instance, from Socolar's construction of tiles with arbitrarily large isohedral numbers, or by the constructions of sigma morphic and polymorphic tiles of Ba\v si\' c, D\v zuklevski and Slivkov\' a.

That said, not much work was done on Heesch's problem in $\mathbb{E}^3$ prior to this paper. Mann noted that a notched rhombic dodecahedron could produce tiles with high Heesch numbers \cite{mann2004}, and Bakker in his search produced examples with Heesch number 4. A notched cube with Heesch number 2 was also reported by Ba\v si\' c and Slivkov\' a \cite{basicsliv2022}, and in the same paper, they showed that taking notched cubes in arbitrarily large dimensions results in arbitrarily large Heesch numbers. This behavior is somewhat expected, but unfortunately doesn't solve the problem in any fixed dimension.

In this paper, we present a novel approach to the problem in $\mathbb{E}^3$, introducing a new method for bounding the Heesch number of a family of tiles. In doing so, we initiate the study of protosets that tile the plane in an unbalanced way -- meaning, intuitively, that one of the tiles is at least $q$ times more present than the other. We then discuss how this method could be used to potentially yield a disconnected tile in $\mathbb{E}^2$ with an arbitrarily large Heesch number and give some computational results.

During the preparation of this article, the construction of three-dimensional tiles with arbitrarily large Heesch numbers was announced by Ba\v si\' c \cite{tilinglist}. Despite that, we still find the results of this paper valuable as they present a novel approach to Heesch's problem that may yield future applications.

\section{Overview of the main ideas from the literature}\label{overview}

In this section we give a brief overview of all the main ideas that have arisen in the literature of limiting the Heesch number of a protoset (which often consists of just a single prototile).

Fontaine \cite{fontaine1991} was the first to discover a tile with a finite Heesch number bigger than 1. Her construction was somewhat ad hock but the main idea is to force a spiral arrangement of tiles that would lead to an intersection in the third corona. This approach doesn't seem to generalize, but is worth noting.

The first general method of finding a family of tiles that have finite Heesch numbers was introduced by Robert Amman \cite{amman1991} and can be showcased by Figure \ref{ammanbre}. Observe that, and this is the main point, this hexagon has an uneven number of protrusions (called \textit{bumps}) and intrusions (called \textit{nicks}). Observe further that if two edges are touching, then they are either both flat or one has a bump and the other a nick -- in both cases having the same number of bumps and nicks on that edge. Thus, the excess nicks that accumulate during the formation of the coronas have to be situated along the boundaries. However, since the base shape of this tile is a regular hexagon, the coronas will follow the underlying hexagonal grid; c.f. Figure \ref{ammanbre}. The main insight is that the boundary of this configuration is linear in the number of coronas, whereas the number of tiles inside those configurations is quadratic. Since each tile contributes one excess nick, this will result in a quadratic number of excess nicks, and only linear space to accommodate them, which is going to break at the first moment the quadratic function overcomes the linear one. Doing the exact calculations gives that the Heesch number of this tile is at most 3, with a matching lower bound evident from the Figure \ref{ammanbre}.\footnote{Technically, since we allow for holes in coronas, the Heesch number of the Amman hexagon should be $4$. The author, however, finds this extremely ugly so we postpone this discussion for some other time. Holes in the $4^{th}$ corona are very small and could never be accommodated by the whole tiles, and perhaps rendering such coronas as invalid would allow for the best definition.}

\begin{figure}[ht]
  \center
   \includegraphics [width = 0.5\linewidth]{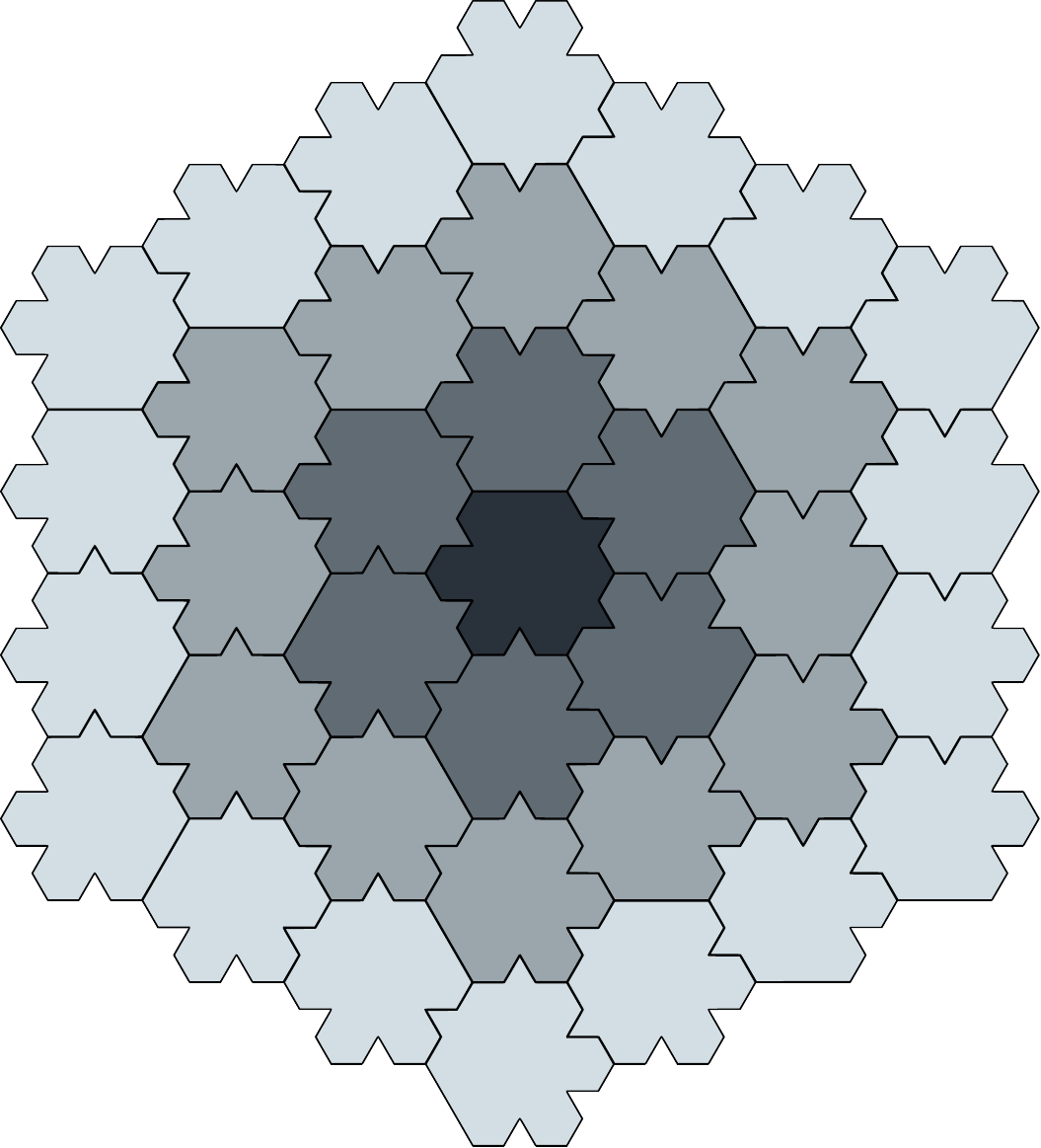} 
   \caption{Amman's hexagon surrounded three times.}
   \label{ammanbre}
\end{figure}

This very elegant method was later expanded upon by Casey Mann, who considered bigger polyhexes (specifically, stick tiles that consist of many hexagons that are glued together in a straight line) with notched edges. This resulted in his famous discovery of a tile with Heesch number 5 \cite{mannthesis,mann2004}.

Ba\v si\' c came up with a very nice observation \cite{basic6} that a tile living inside a vertex--transitive tiling by two regular polygons -- the Archimedean tiling of type $3.6.3.6$ (meaning that it has a triangle, hexagon, triangle and a hexagon again, meeting at each vertex in this cyclic order) -- and such that it has a disproportionate number of tiles of one kind to those of the other, has to have a finite Heesch number. The reason for this is that the tiles in the coronas have to follow the grid made by this tiling, but on average the tiling has twice as many triangles as hexagons. However, if a sufficiently big patch were to be covered by the observed tile, each tile would bring its own imbalance to the count, resulting in a big accumulated imbalance of one tile type to the other. This patch also has twice as many triangles as hexagons, which gives a contradiction, and we deduce that it cannot be covered.

% Kaplan reports same findings but without bigger Heesch numbers.

Ba\v si\' c, D\v zuklevski and Slivkov\'a \cite{basicdzuksliv2023} constructed a disconnected tile that showcases another idea: in it, two parts are at a distance of $2n+\frac{1}{2}$ and each corona brings two edges of those parts closer by $2$ units (one unit from each side). This, while allowing for the formation of $2n$ coronas, at the same time prohibits the formation of $2n+1$ coronas, due to non--integer distance between the starting pieces. This idea was later applied in the same paper to the sets of tiles.

A variation of this idea, yet somehow slightly of a different feel, is shown in Figure \ref{slicnen}. The goal here was to force a sort of circular structure that would end up overlapping with itself (perhaps somewhat reminiscent of Fontain's proof but not inspired by it). This structure can be realized by a protoset of two tiles, but doing so with a single tile is considerably harder. In Figure \ref{slicnen} we show a tile that can be surrounded $n$ times with copies of itself that are similar to it rather than congruent, and cannot be surrounded $n+1$ times. The apex angle, $\alpha$, made by extending the lateral edges of the underlying trapezoid, is chosen so that $\frac{2\pi}{2n+3}<\alpha <\frac{2\pi}{2n+1},$ and $\alpha \neq \frac{2\pi}{2n+2}.$

\begin{figure}[ht]
  \center
   \includegraphics [width = 8cm]{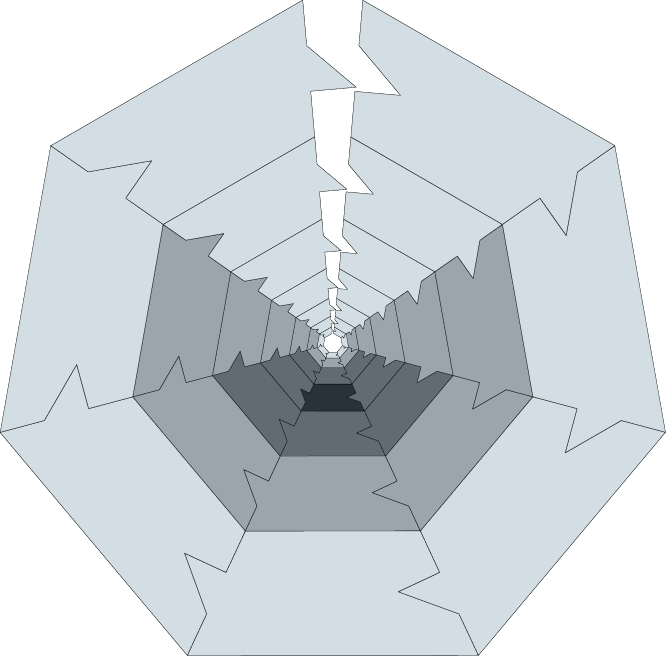} 
   \caption{A tile that can be surrounded $n$ times by tiles similar to it; illustration of the example $n=3.$}
   \label{slicnen}
\end{figure}

One idea of solving Heesch's problem in some higher, fixed dimension, would be to connect these two pieces using additional dimensions, and indeed this is what Ba\v si\' c announced \cite{tilinglist}. Not building upon such a set of two disjoint pieces, but from a different set of 4, with the same main idea.

In the hyperbolic space, Tarasov constructed tiles with arbitrarily large Heesch numbers \cite{tarasov2010}, and the idea was to force a chain of tiles that would all require the next tile to have a specific ordering of bumps and nicks along its boundary, which ultimately cannot be extended once we reach the $(n+1)^{st}$ corona.

This is, to the author's knowledge, a complete list of ideas that have so far been used. In what follows we exhibit a novel approach of limiting the Heesch number of a family of tiles.

\section{Motivation}

We will briefly explain the motivation behind considering the notched prisms in the first place.

In what follows, by a notched prisms we will consider a prism that is obtained by stacking cubes with bumps and nicks on their lateral faces, see Figure \ref{fig:perpendicular-sections} for an example. Note that while in the plane (i.e. when dealing with notched squares) we are restricted to using just a single shape for surrounding our tile, in space we could use several distinct shapes in each layer (along the $z$-axis), which would come from different layers of the prism, potentially ultimately allowing for a bigger number of coronas. Of course, the main obstacle to this approach is that layers are influenced by each other, stemming from the particular stacking of notched prisms of height one that comprise the big prism we are surrounding, but we were hopeful that clever ordering of the height one prisms could be found which would result in tiles with high Heesch numbers.

To support the claim that using several distinct types of notched hexagons (or squares) gives higher Heesch numbers (in a single layer), we found that a hexagon with a nick on each of its sides can have (at least) 4 coronas around itself made by Amman's hexagon, while using only Amman's hexagon results in just three coronas, as discussed. As in the plane, notched hexagons behave better with respect to Heesch numbers (reaching up to a Heesch number of 3, whereas the biggest Heesch number of a notched square is just 1), it was expected that they would also behave this way in three dimensions. Due to it being computationally more challenging to search for such tiles, we resorted to considering square--based prisms with notches on sides.

One way of looking at a notched prism with high Heesch number is to slice it along one of the axes parallel to its sides. This results in a surrounding of one of the notched rectangles using both tiles that are congruent to it, and those obtained by intersecting the prism with the plane parallel to the other vertical side of the prism. Doing the same gives a surrounding of the other rectangle. 

It is tempting to believe that therefore the protoset whose tiles are obtained by slicing the prism with these two planes has a finite Heesch number (even for the stronger definition of the Heesch number of a protoset, but since this is not the main topic of this paper we postpone this discussion to a future paper). However, and this is the crucial insight on which the main approach of this paper rests, this need not be the case -- in fact, the observed protoset could even tile the plane! The crucial fact that allows this behavior is that the layers parallel to the $x$--axis and the ones parallel to the $y$--axis are intertwined by the sheer fact that tiles in them share the base prism whose intersections they represent. And this is where the main idea comes from, as it represents one way of utilizing this interconnectedness between orthogonal layers. In particular, if the observed protoset were such that in any tiling admitted by it, the ratio of tiles  $\mathcal{P}_1$ to $\mathcal{P}_2$ is at least $q$, for some $q>1,$ then it would have a finite Heesch number. Think of notched rectangles $\mathcal{P}_1$ as yellow and of $\mathcal{P}_2$ as red (see Figure \ref{fig:perpendicular-sections}). If this were the case, and if we have sufficiently many coronas around the starting prism, then in each layer parallel to the $x$--axis (call them $x$--layers), we would have to have at least $q$ times more yellow tiles than the red ones. Thus, in total, looking at all the $x$--layers together, we would have at least $q$ times more yellow tiles than the red ones. The same has to hold for the $y$--layers. However, each yellow tile from an $x$--layer corresponds to a red tile in a $y$--layer! This is true since the prism they belong to has a red tile in its other direction. Thus, all the yellow tiles that we see in $x$--layers would be transformed into red tiles in $y$--layers, and therefore the majority of the tiles in the $y$--layers would have to be red; see Figure \ref{glavna} for a schematic representation. But this is impossible, as we cannot guarantee to have arbitrarily large patches in which red tiles would be predominant (if yellow is the one in surplus in all the tilings). This last statement needs a rigorous proof (see Lemma \ref{lem}), and we give a formal exposition of the whole idea in the next section.

As will be mentioned at the end of the next section, this approach will yield finite Heesch numbers to a broader family of tiles that behaves in this manner (Theorem \ref{generalizacija}), not just notched prisms.

\section{The Density Method}

Given a notched prism $\mathcal{P}$ we associate with it a set of tiles $\{\mathcal{P}_1,\mathcal{P}_2 \}$ where both $\mathcal{P}_1$ and $\mathcal{P}_2$ are obtained by intersecting $\mathcal{P}$ with planes parallel to its lateral sides, and orthogonal to the base of the bumps and nicks, passing exactly through the middle of each one they intersect; see Figure \ref{fig:perpendicular-sections} for an illustration. In the following we will consider only those prisms $\mathcal{P}$ for which the associated set $\{ \mathcal{P}_1, \mathcal{P}_2\}$ admits only tilings in which one of the tiles is "$q$ times more present", a notion that we shall formally define in Definition \ref{ratio}. Observe that if tile $T$ is $q$ times more present than tile $S$ in a tiling $\mathcal{T}$ then $S$ is $\frac{1}{q}$ times more present than tile $T$. In particular, unless they have equal ratio, at least one of the tiles is $l$ times more present in a tiling than the other, for some $l>1.$ We will call that tile tile $\mathcal{P}_1$ and the other one $\mathcal{P}_2$.

\begin{figure}[htbp]
  \centering
  \includegraphics[width=\linewidth]{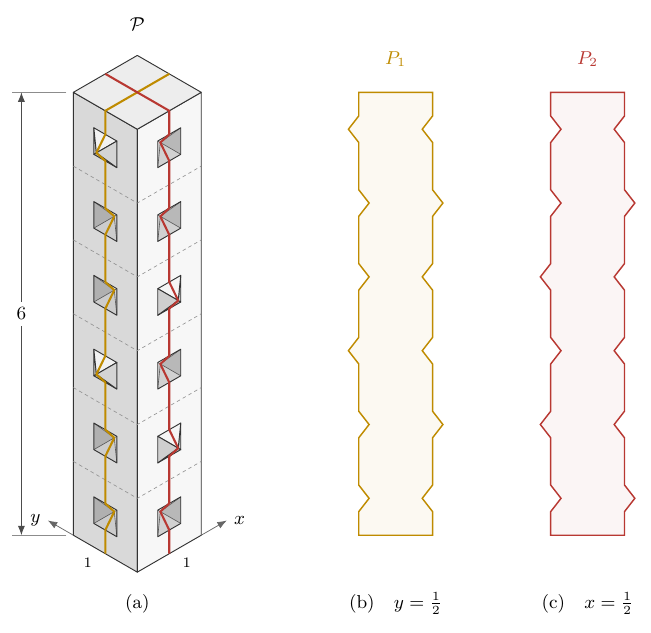}
  \caption{A notched prism and its two perpendicular central sections. The yellow and red intersections produce tiles $\mathcal{P}_1$ and $\mathcal{P}_2$.}
  \label{fig:perpendicular-sections}
\end{figure}

We begin with the definition of the ratio of tiles in a tiling.

\begin{defn}\label{ratio}
We say that a tiling $\mathcal{T}$ by two prototiles $\mathcal{P}_1$ and $\mathcal{P}_2$ is of ratio $q$ (of $\mathcal{P}_1$ w.r.t. $\mathcal{P}_2$), if $\lim_{r \rightarrow\infty} \frac{t_1(r,P)}{t_2(r,P)} = q$, for every point $P$. By $t_i(r,P)$ we denote the number of tiles congruent to $\mathcal{P}_i$ in the patch generated by the disk of radius $r$ centered at $P$. We further say that a protoset is of ratio $q$ if every tiling it admits has ratio $q$, or that it is of ratio at least $q$ if each tiling it admits has ratio at least $q.$
\end{defn}

We will show in Lemma \ref{welldef} that if the said limit exists for some point $P,$ then it exists for any other point $Q$, and is equal to it, but to prove that we will need Lemma \ref{lem}. Thus the definition could (and should be thought of as in this weaker version, for from it follows the stronger one) be restated with \textit{some} instead of \textit{every}. For now, however, we keep to Definition \ref{ratio}.

We state our main theorem:

\begin{thm}\label{thm1}
    Let $\mathcal{P}$ be a prism such that protoset $\{ \mathcal{P}_1, \mathcal{P}_2\}$ has ratio at least $q$, for $q > 1$. Then $\mathcal{P}$ has a finite Heesch number.
\end{thm}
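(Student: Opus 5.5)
We argue by contradiction. Suppose $\mathcal{P}$ has infinite Heesch number, so for every $n$ it admits $n$ coronas. First we reduce to a lattice-like structure. The prism is a stack of unit notched cubes, and the bumps and nicks force lateral faces to meet face-to-face along unit squares. We would show that in a sufficiently deep corona configuration, all prisms in a large central region are vertical (parallel to the $z$-axis) and aligned with the integer grid. If some orientations are mixed, the argument is run within a large monochromatic region, whose existence follows by pigeonhole on orientations in a ball of radius $\sim n$. Inside the ball $B_R$ of radius $R=cn$ around the central prism, every horizontal plane $z=k+\tfrac12$ is irrelevant. What matters are the vertical planes $x=i+\tfrac12$ and $y=j+\tfrac12$ passing through centres of bumps and nicks. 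Each prism meets exactly one plane of each family through its centre, and the section is congruent to $\mathcal{P}_1$ in one direction and to $\mathcal{P}_2$ in the other. Call a prism \emph{$x$-yellow} if its section by the $x$-plane is a copy of $\mathcal{P}_1$. Then its $y$-section is a copy of $\mathcal{P}_2$, so it is \emph{$y$-red}, and vice versa.

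The key input is Lemma~\ref{lem}. We take it in the form: if a protoset has ratio at least $q>1$, then for every $\varepsilon>0$ there is $r_0$ such that every patch (not necessarily extendable to a tiling) covering a disk of radius $r\ge r_0$ contains at least $(q-\varepsilon)$ times as many $\mathcal{P}_1$ tiles as $\mathcal{P}_2$ tiles, up to an error of order $r$. This is the compactness step. Otherwise, arbitrarily large patches with ratio below $q-\varepsilon$ would exist. Using the extension-theorem argument mentioned in the introduction, i.e.\ taking a limit of translated patches, we would build a tiling whose ratio along some sequence of disks is below $q$. Passing to a subsequence and arguing as in Lemma~\ref{welldef}, this contradicts the hypothesis. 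Each $x$-plane section of the coronas is a planar patch by $\{\mathcal{P}_1,\mathcal{P}_2\}$ covering a disk of radius comparable to $R$, at least for the planes near the centre. Applying the lemma slab by slab, summing over the $\Theta(R)$ central $x$-planes, and discarding boundary contributions of order $R\cdot R$ against a volume of order $R^3$, we get
\[ Y_x \ge (q-\varepsilon) R_x - O(R^2), \]
where $Y_x,R_x$ count $x$-yellow and $x$-red prisms in a fixed cylinder $C$ around the centre. The same holds for $y$-planes, applied to the same cylinder $C$ (we choose $C$ square so it is symmetric in $x$ and $y$).

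Since $x$-yellow means $y$-red, we have $Y_x=R_y$ and $R_x=Y_y$. Hence $Y_x\ge (q-\varepsilon)Y_y-O(R^2)$ and $Y_y\ge (q-\varepsilon)Y_x-O(R^2)$. Both $Y_x,Y_y$ are $\Theta(R^3)$, because the total count is $\Theta(R^3)$ and each is at least a fixed fraction of it. Multiplying the two inequalities gives $(q-\varepsilon)^2\le 1+O(1/R)$. With $\varepsilon$ small this is impossible once $R$, hence $n$, is large. So the number of coronas is bounded.

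The main obstacle is the first step: showing that the coronas restricted to a big region really decompose into planar slices that are genuine patches by $\{\mathcal{P}_1,\mathcal{P}_2\}$ covering disks. This needs all prisms in the region to be vertical and grid-aligned, and the corona condition (each corona contains the previous union in its interior, with holes allowed) to yield full coverage of a ball of radius linear in $n$. I would first try to prove linear growth of the covered radius from the interior condition and the bounded diameter of $\mathcal{P}$. I would then prove alignment by a local argument, namely that notches force face-to-face contact. Holes, if any, are handled by restricting to the union of the first $n-1$ coronas, which by definition lies in the interior of the union of the first $n$ coronas.
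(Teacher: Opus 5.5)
Your architecture is the paper's: slice by the vertical planes through prism centres, apply a density lemma in each slice, and double count using that a prism whose $x$-section is $\mathcal{P}_1$ has $\mathcal{P}_2$ as its $y$-section. The final algebra is fine; adding your two inequalities gives $(q-\varepsilon-1)(Y_x+Y_y)\le O(R^2)$ directly. Restricting to a central cylinder on which every slice is fully covered is a sensible refinement, since the density lemma is then only applied to large, fully covered regions (the paper instead applies Lemma~\ref{lem} to every layer, treating each as an $n$-corona patch).

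The genuine gap is your justification of that density lemma. A limit of translated bad patches is \emph{not} a bad tiling. The compactness argument of the Extension theorem only remembers the patches on bounded windows around the chosen centres. A deficit over a disk of radius $r_j\to\infty$, however, can be spread over the whole disk and be invisible in every bounded window around the centre. The paper records exactly this failure after the proof of Lemma~\ref{band}: fill a central disk of radius $R_j/10$ with one tile type and the rest with the other. Each patch is biased one way, yet the centred limit tiling contains only the other type. Choosing the centres more cleverly does not rescue the argument unless you already know that a globally bad patch contains a point at which it is bad at every scale in a growing range. That is precisely the local-to-global statement that has to be proved.

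The paper proves it in two steps, and both adapt directly to your formulation. First, compactness is applied only to a bounded-window statement (Statement~\ref{stmt1}): for each $L>2U$ there is $T$ such that every patch covering $\mathcal{B}_T(x)$ contains a good disk centred at $x$ with radius in $[L,T-U]$. Second, a Vitali-type greedy selection with fourfold enlargements captures a $1/16$ fraction of the eligible uncovered area, and is bootstrapped through the supremum $p$ (Statement~\ref{stmt2}). This shows that all but an $\eta$-fraction of the region is covered by $D$-separated good disks, and counting the tiles assigned to these disks gives the ratio bound. In your disk/rectangle formulation the excluded boundary strip trivially has area $O(rT)$, so you could even bypass Lemma~\ref{band}. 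You also only need ratio at least $q-\varepsilon$ per slice, so the unproved ``error of order $r$'' is unnecessary.

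Two smaller points concern your first step. Pigeonhole yields many equally oriented prisms, not a large monochromatic \emph{region}. None is needed, though: under any symmetry of the square about the vertical axis, the two central sections of a prism are still one copy of each prototile. Only verticality and grid alignment matter. The paper secures verticality by the design of the prism (irrational or very small layer height) and takes grid placement as a hypothesis in Theorem~\ref{generalizacija}. Also, the union of the first $n-1$ coronas can itself have holes, so restricting to it does not remove them. Coverage of a ball of radius linear in $n$ comes instead from the graph-distance argument of Statement~\ref{stmt3} and the corollary after it, which is the linear-growth plan you sketch.
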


We prove this theorem using the following Lemma, whose proof we postpone for later.

\begin{lema}\label{lem}
    For each $\delta > 0$ there exists an $n_{\delta}$ such that for all $n\geqslant n_{\delta}$ all the patches of $n$ coronas of the protoset $\{ \mathcal{P}_1, \mathcal{P}_2 \}$ have the ratio of $\mathcal{P}_1$ to $\mathcal{P}_2$ in $(q-\delta, +\infty).$
\end{lema}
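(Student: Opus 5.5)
Assume the statement fails. Then there are $\delta>0$ and patches $\mathcal{C}^{(n_k)}$, with $n_k\to\infty$, each consisting of $n_k$ coronas of the protoset $\{\mathcal{P}_1,\mathcal{P}_2\}$ around some central tile, in which the ratio of $\mathcal{P}_1$ to $\mathcal{P}_2$ is at most $q-\delta$. The plan is to extract from these patches a tiling of the whole plane whose ratio is at most $q-\delta$, or at least has $\liminf$ of the ratio below $q$. This contradicts the hypothesis that the protoset has ratio at least $q$. The compactness is the standard Extension-theorem argument of Gr\"unbaum and Shephard, but tile counts must be carried along it.

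First I record the geometric facts used throughout. The tiles have bounded diameter $D$ and area bounded below by $a>0$. Hence a disk of radius $r$ meets at most $O(r^2)$ tiles. Each corona advances the boundary by at least a fixed positive distance, so the union of $n$ coronas contains a disk of radius $cn$ about the centre and is contained in a disk of radius $Dn$. The number of tiles within distance $D$ of the boundary of a disk of radius $r$ is $O(r)$, so boundary effects are $o(r^2)$.

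The main obstacle is that the limiting tiling only remembers what happens in fixed-radius windows. The global bad ratio of a patch might be caused entirely by its outer region, which could escape to infinity, while near the centre everything is fine. I would handle this by an averaging or pigeonhole step. Cover the patch $\mathcal{C}^{(n_k)}$, or rather the disk of radius $cn_k$ inside it, by disjoint translated squares of a fixed large side $R$. The tiles cut by square boundaries and the tiles in the annulus between radius $cn_k$ and the outer boundary must be controlled. The first contribution is $O(n_k^2/R)$, which is small relative to $n_k^2$. For the annulus, I would instead choose the inner disk as the whole patch minus a $D$-neighbourhood of its boundary; the troublesome part is then the boundary layer, which should have size $O(n_k)$. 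I expect this boundary estimate for an arbitrary corona patch to need care, since coronas may have holes. I would bound the measure of the patch's boundary neighbourhood via the number of tiles of the last corona, and I would try to show this is $o(\text{total})$, perhaps by passing to a subsequence of corona indices where growth is controlled. Granting this, some square $Q_k$ of side $R$ lying inside the patch has ratio at most $q-\delta/2$, since a weighted average of ratios is bounded by the maximum.

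Now take $R=R_j\to\infty$ and do a diagonal argument. For each $j$, choose $k$ large enough that the square $Q_{k}$ of side $R_j$ with ratio at most $q-\delta/2$ sits deep inside the patch, at distance greater than $R_{j}$ from its boundary. Translate its centre to the origin. Only finitely many local configurations by the two prototiles fit in a bounded window, up to the relevant congruences, so by compactness a subsequence converges on every ball to a tiling $\mathcal{T}$ of the plane. To keep the bad ratio in the limit, I would instead nest the choices: choose a square of side $R_{j+1}$ whose bad ratio forces, by the same averaging inside it, bad sub-squares of side $R_j$ near its centre, and take the limit along such nested choices. Then $\mathcal{T}$ has ratio at most $q-\delta/2$ on disks of radius $R_j$ about the origin for all $j$. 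This gives $\liminf_r t_1/t_2\le q-\delta/2<q$, contradicting that every tiling has ratio at least $q$. Arranging this nesting so that the bad windows are centred is the delicate point, and I would first try to achieve it by choosing, at each scale, the sub-square with the worst ratio.
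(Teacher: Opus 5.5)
There is a genuine gap, and it sits at the step you flag as delicate: getting a bad window centred at a fixed point. Compactness only transports the statistics of fixed windows about the chosen centre. Badness of a square of side $R_{j'}$ says nothing about the concentric sub-square of side $R_j$, because the deficit can sit entirely in the outer part of the window and escape to infinity in the limit. This is exactly the phenomenon in the two-squares example the paper gives after the proof of Lemma \ref{band}.

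Your nesting does not repair this. Averaging inside a bad square of side $R_{j+1}$ yields a bad sub-square of side $R_j$ at an uncontrolled position; a bad square can perfectly well have a good central region. Choosing the worst sub-square does not centre it either. Iterating gives nested bad squares that share a common point but whose centres drift. In a limit tiling, ``large bad squares containing the origin'' does not contradict Definition \ref{ratio}, which only constrains disks centred at a fixed point. Uniform control of off-centre windows is essentially what the lemma itself asserts.

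The missing idea is a multi-scale covering argument, with compactness applied to a centred, all-scales property rather than to global badness.

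\textbf{Compactness (Statement \ref{stmt1}).} Suppose arbitrarily large covered disks $\mathcal{B}_T(x)$ had no good disk about $x$ with radius in $[L,T-U]$. Then the limit tiling would have no good disk about the origin of any radius at least $L$, contradicting the ratio hypothesis. Hence every point $x$ with $\mathcal{B}_T(x)$ covered is the centre of a good disk of bounded radius.

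\textbf{Covering (Statement \ref{stmt2}).} A greedy Vitali-type selection covers all but an $\eta$-fraction of the patch by disjoint, $D$-separated good disks. One greedy pass only captures $\frac{1}{16}$ of the eligible area, so the argument iterates over well-separated scales ($M\gg T+D$); this is what the supremum $p$ encodes. Counting tiles in these disks then shows the patch is good. Read contrapositively, this is the only route to the centred object you want: a point all of whose concentric disks with radii in $[L,T]$ are bad.

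\textbf{Other gaps.} You also leave the boundary estimate open, and a subsequence of corona indices cannot supply it. Badness of an $n_k$-corona patch is not inherited by its sub-patches with fewer coronas. The paper instead proves, for \emph{every} $n$-corona patch, that the last corona is a vanishing fraction of the tiles (Statements \ref{stmt3} and \ref{stmt4}, via quadratic growth of contact-graph balls). Likewise, ``each corona advances the boundary by a fixed distance'' is not justified as stated. Linear containment comes from the contact-graph segment argument in Statement \ref{stmt3}.
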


Since the proof of this Lemma rather long and spans several Statements and Lemmas, we first show how it implies the statement of the Theorem.

\begin{proof}[Proof of Theorem \ref{thm1}.]

Suppose the contrary, that $\mathcal{P}$ can tessellate the space. That, in particular, means that we can create arbitrarily many coronas around $\mathcal{P}.$ We shall see how to choose $\delta$ later (depending only on $q$) but assume for now that we have chosen it to be sufficiently small. Lemma \ref{lem} gives us an integer $n_{\delta}$ and we take an arbitrary $n \geqslant n_{\delta}$. Since $\mathcal{P}$ tessellates the space, we can create $n$ coronas around it.

Taking the original prism, $\mathcal{P}$, to have its edges parallel to the $x$-- and $y$--axes, and its centroid (not taking into account the bumps and nicks) at $(0,0,0)$, we slice the structure of $n$ coronas by the planes that are orthogonal to and cut the $x$-- and $y$--axes at integer coordinates, thus obtaining $2(2n+1)$ planes with a nonempty intersection with $\mathcal{P}$. The intersections with the planes that cut the $y$--axis at integer points we will call the $x$--layers, and similarly for the $y$--layers. Observe that each layer in fact represents the surrounding of the prototiles $\mathcal{P}_1$ or $\mathcal{P}_2$ (in particular, $n$ coronas around them -- this is easily proven by induction; the important thing to note is that we have to force the tiles to all be vertically oriented. This is somewhat obvious, but we can make the height of each level of the prism, say, irrational, which would prohibit valid surroundings if one of the tiles was in horizontal orientation. Perhaps easier, the height of each layer could be made so small compared to the base of the prism that no tile could be oriented in a horizontal way). Consider, for instance, the $x$--layers. Due to the choice of $n$, each of the layers has the ratio of tiles $\mathcal{P}_1$ to $\mathcal{P}_2$ at least $q-\delta$. Looking at all the $x$--layers together, we obtain that in all of them jointly there are at least $q-\delta$ times more tiles $\mathcal{P}_1$ than tiles $\mathcal{P}_2.$ Notice that each prism in these $n$ coronas is cut into a $\mathcal{P}_1$ by a plane from one set (that is to say, in an $x$--layer or $y$--layer) and into $\mathcal{P}_2$ by a plane from the other set. What this in turn means is that in all the $y$--layers there will be at least $q-\delta$ times more tiles $\mathcal{P}_2$ than tiles $\mathcal{P}_1$, i.e. at most $\frac{1}{q-\delta}$ times more tiles $\mathcal{P}_1$ than tiles $\mathcal{P}_2$. Recall that we took the rectangle $\mathcal{P}_1$ to be one of the two possible, such that $q > 1.$ For $\delta < q-1$, we have that $q-\delta > 1$ and therefore $\frac{1}{q-\delta} < 1,$ and, in particular, $\frac{1}{q-\delta} < q-\delta$. Recall that jointly, in all the $y$--layers, there are at most $\frac{1}{q-\delta}$ times more tiles $\mathcal{P}_1$ than tiles $\mathcal{P}_2$. Therefore there exists a layer in which the ratio is smaller or equal than $\frac{1}{q-\delta}$ (for otherwise, in each layer we would have more than $\frac{1}{q-\delta}$ times many tiles $\mathcal{P}_1$ than tiles $\mathcal{P}_2$, which would, in turn, mean that in all the $y$--layers, looking at them jointly, there would be more than $\frac{1}{q-\delta}$ times more tiles $\mathcal{P}_1$ than tiles $\mathcal{P}_2$). Since $\frac{1}{q-\delta} < q-\delta$ by our choice of $\delta$, this is in contradiction with the choice of $n$ and this finishes the proof.

\end{proof}

\begin{figure}[!p]
  \vspace*{-1.5cm}
  \centering
  \makebox[\linewidth][c]{%
    \includegraphics[width=1.1\linewidth]{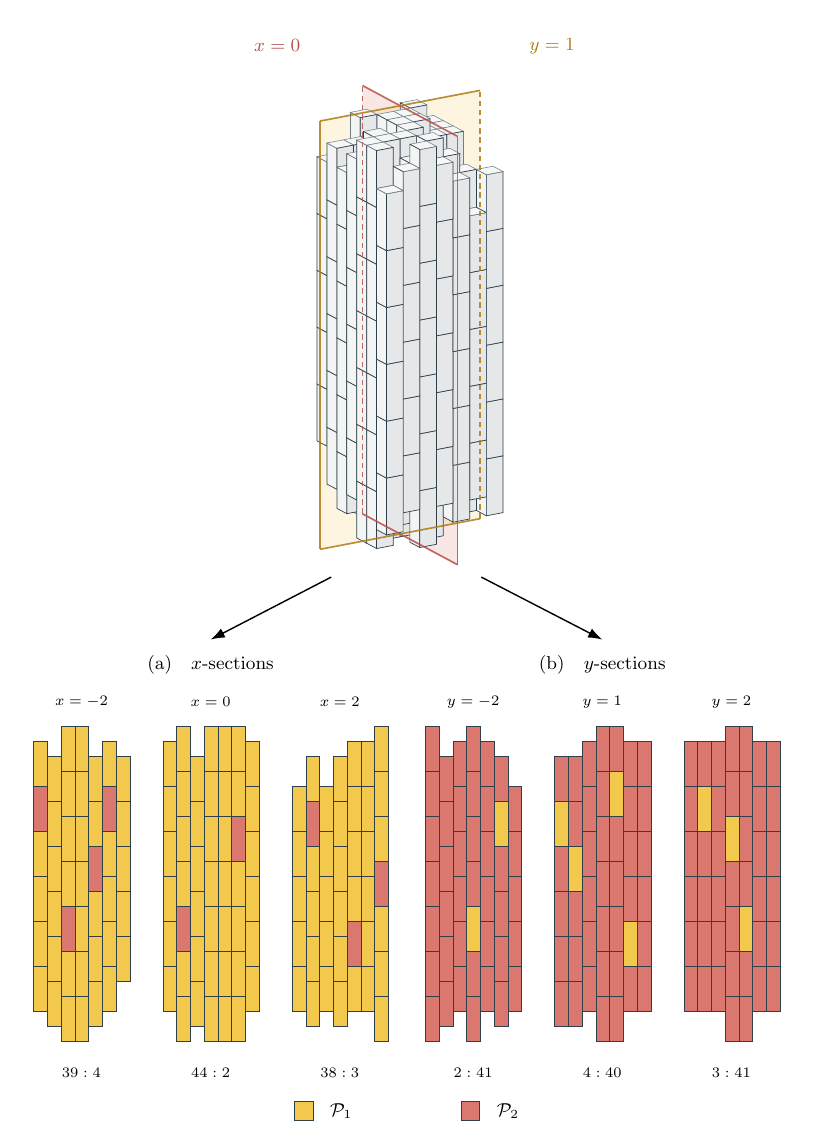}%
  }
  \caption{A schematic representation of the slicing argument.}
  \label{glavna}
\end{figure}

Note that Theorem \ref{thm1} readily generalizes. Indeed, for the same proof to work, we just need that the protoset obtained by intersecting the tile in both directions is disbalanced, and that the tiles in coronas are all vertical and follow a grid placement. Thus we get the following generalization:

\begin{thm}\label{generalizacija}
    Let $\mathcal{P}$ be a figure obtained by taking a square--based prism and gluing or subtracting some shapes along its four vertical sides. Let also the protoset $\left \{\mathcal{P}_1, \mathcal{P}_2 \right \}$ be obtained by intersecting $\mathcal{P}$ with planes parallel to the sides of the starting prism, and passing through the center of the original prism. Assume, furthermore, that the only way in which $\mathcal{P}$ can be surrounded is by copies of the form $g(\mathcal{P}) + (m\ell, n\ell, t),$ for $m,n\in \mathbb{Z}, t\in \mathbb{R},$ where $\ell$ is the length of the side of the base and $g$ preserves the vertical axis and acts on the horizontal plane as a symmetry of the square. Finally, assume that $\left \{ \mathcal{P}_1, \mathcal{P}_2 \right \}$ has ratio at least $q >1.$
    
    Then $\mathcal{P}$ has a finite Heesch number.
\end{thm}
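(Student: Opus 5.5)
The plan is to repeat the proof of Theorem \ref{thm1} almost verbatim. The work is in checking that the hypotheses of Theorem \ref{generalizacija} supply exactly the structural facts that proof used for notched prisms. There are two such facts. First, every planar slice of a corona structure around $\mathcal{P}$, taken by a plane orthogonal to the $x$-- or $y$--axis at an integer multiple of $\ell$, is a patch of $n$ coronas of the protoset $\{\mathcal{P}_1,\mathcal{P}_2\}$. Second, every prism in the structure contributes a copy of $\mathcal{P}_1$ to one family of slices and a copy of $\mathcal{P}_2$ to the other. Once both are in place, Lemma \ref{lem} (which we assume, as in the paper) and the double-counting argument finish the proof.

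We argue by contradiction. Assume $\mathcal{P}$ has infinite Heesch number. Fix $\delta<q-1$, take $n\ge n_\delta$ from Lemma \ref{lem}, and take $n$ coronas around $\mathcal{P}$.

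By hypothesis every tile is of the form $g(\mathcal{P})+(m\ell,n\ell,t)$. So its base square occupies a cell $[m\ell,(m+1)\ell]\times[n\ell,(n+1)\ell]$ of the grid, shifted by the central offset, and its vertical axis is vertical. Slice by the planes $y=j\ell$ (the $x$--layers) and $x=i\ell$ (the $y$--layers), passing through the centres of the cells. Since $g$ is a symmetry of the square fixing the vertical axis, each cut of a copy through its centre, parallel to a side, is congruent to $\mathcal{P}_1$ or $\mathcal{P}_2$. Moreover, the two perpendicular central sections of a single copy are one $\mathcal{P}_1$ and one $\mathcal{P}_2$. The reason is that $g$ either preserves or swaps the two axis directions, and it swaps the roles of the two sections exactly when it swaps the directions.

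Next we check by induction on $i$ that the $i$th corona of the solid meets each slicing plane in the $i$th corona of the planar slice. Disjointness of interiors passes to sections. A common boundary point with a tile of the previous corona can be taken inside the plane, because all tiles are vertical prisms over grid squares: adjacency is detected in the horizontal projection and is inherited by the central sections of grid neighbours. Containment in the interior also restricts to the plane.

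This step is the main obstacle. One subtlety is that slices far from $\mathcal{P}$ might not be \emph{coronas around} a tile in the planar sense. I would handle this by restricting to the layers that actually meet $\mathcal{P}$, exactly as Theorem \ref{thm1} did; there are $2\cdot 1+\dots$ such planes, one through the centre in each direction for the single column. If needed, I would widen this to the layers through columns within distance $n/2$, which are centred in at least $n/2$ coronas, and apply Lemma \ref{lem} with $n/2$. In this version the counting is done over a fixed finite set of columns, chosen symmetrically so that the same prisms appear in both directions. The second subtlety is that the arbitrary glued or subtracted shapes might let adjacency occur only off the central planes. I would rule this out by noting that the hypothesis forces grid placement, so neighbours share a full lateral face whose centre lies on the central plane.

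Finally we double-count. Let $N_1,N_2$ be the numbers of prisms in the chosen column set whose $x$--section is $\mathcal{P}_1$ or $\mathcal{P}_2$, respectively. By Lemma \ref{lem}, each $x$--layer has ratio greater than $q-\delta$, so summing gives $N_1/N_2>q-\delta$. In the $y$--layers the roles are swapped, so their joint ratio is $N_2/N_1<1/(q-\delta)<1<q-\delta$. Hence some $y$--layer has ratio below $q-\delta$, contradicting Lemma \ref{lem}. Therefore the Heesch number of $\mathcal{P}$ is finite.
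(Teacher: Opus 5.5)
There is a genuine gap in your final double count. Converting the $x$--layer tally into the $y$--layer tally (each $\mathcal{P}_1$ seen in an $x$--layer is a $\mathcal{P}_2$ in a $y$--layer) requires the chosen $x$--layers and the chosen $y$--layers to contain exactly the same prisms. That is why the paper's proof of Theorem \ref{thm1}, which Theorem \ref{generalizacija} reuses, takes \emph{all} $2n+1$ planes in each direction that meet the structure; the count $2(2n+1)$ there refers to these, not to planes through $\mathcal{P}$ alone.

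Your first repair compares the prisms in columns $(m,0)$, cut by $y=0$, with the prisms in columns $(0,m)$, cut by $x=0$. These sets share only the central column, so both slices can have ratio above $q-\delta$ and nothing is contradicted.

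Your second repair fails for the same reason. If you keep whole layers with $|j|\le n/2$ and $|i|\le n/2$, you are comparing a horizontal strip of columns with a vertical strip. If instead you truncate each layer to a square block of columns so that the prism sets agree, the truncated pieces are no longer patches of coronas, and Lemma \ref{lem} says nothing about them. Either way, the step ``each $x$--layer has ratio greater than $q-\delta$, so summing gives $N_1/N_2>q-\delta$'' is unsupported.

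You were right to distrust the assertion that every slice is an $n$--corona patch, but the trouble is not limited to far layers, and your inheritance argument does not hold. A prism of $\mathcal{C}_i$ may touch $\mathcal{C}_{i-1}$ only through a prism in a neighbouring row. Because the offsets $t$ are free, graph distance inside a slice can exceed the three--dimensional graph distance: a detour through an adjacent row can gain almost a full prism height per step, while the in--row neighbours gain almost nothing. So even the slices through $\mathcal{P}$ need not be $n$ coronas around its sections.

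The missing idea is to stop applying Lemma \ref{lem} as a black box layer by layer and to reuse its proof instead. That proof (Statements \ref{stmt1}, \ref{stmt2} and the final tile count) uses the corona structure only through Lemma \ref{band}, i.e.\ that the $w$--boundary band has negligible area; Statement \ref{stmt1} itself holds for any patch covering a disk. So the argument runs as follows.
\begin{enumerate}
\item Apply the good-disk argument to the disjoint union of all $x$--layers, and separately to the disjoint union of all $y$--layers.
\item A point of a layer within in--plane distance $w$ of that layer's boundary is within distance $w$ of the boundary of the solid structure. By the segment argument of Statement \ref{stmt3}, it therefore lies in a prism of the last $O(w+1)$ coronas.
\item A three--dimensional version of Statements \ref{stmt3} and \ref{stmt4} shows that these prisms form a vanishing fraction of all prisms, so the boundary bands are negligible.
\item This yields joint ratio above $q-\delta$ over all $x$--layers and over all $y$--layers. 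Each of the two families contains every prism exactly once, so your final contradiction then goes through.
\end{enumerate}
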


We also note that an analogous theorem for the hexagon--based prisms holds.

Prior to proving Lemma \ref{lem}, we introduce some necessary notation.

For a fixed $q>1$ and $\delta >0$, choose $ q -\delta < q' < q$ which will present our lower bound on the ratio -- now every ratio $q$ tiling for some $r$ starts having ratio bigger than $q'.$ We say that a disk $\mathcal{B}$ is \textit{good} if $N_1 >  q'N_2$, and \textit{bad} otherwise, where $N_1$ is the number of tiles $\mathcal{P}_1$ that meet $\mathcal{B}$ and $N_2$ is the number of tiles $\mathcal{P}_2.$

Next, for a patch $\mathcal{A}_j$ we will denote by $\Omega_j$ the union of tiles in $\mathcal{A}_j$, which we will call its \textit{support} and by $|\Omega_j|$ their joint area.

By $u$ we will denote the \textit{inparameter} of $\left \{ \mathcal{P}_1, \mathcal{P}_2 \right \}$, meaning the biggest radius of a disk that can be inscribed in both $\mathcal{P}_1$ and $\mathcal{P}_2$, and by $U$ the \textit{circumparameter} -- the radius of a smallest disk in which both $\mathcal{P}_1$ and $\mathcal{P}_2$ can be inscribed. The smaller of the two areas of tiles in ${\mathcal{P}_1, \mathcal{P}_2}$, will be denoted by $a$, and the larger of the two areas by $A.$ Obviously, $a \geq \pi u^2$ and $A \leq \pi U^2.$ $D$ will stand for the bigger of the two diameters of tiles. The number of tiles in a patch $\mathcal{A}$ will be denoted by $t(\mathcal{A}).$

\begin{proof}[Proof of Lemma \ref{lem}]

We argue by contradiction. Suppose that we can find arbitrarily large bad patches $\mathcal{A}_j$ with $N_1 \leq bN_2$. The general proof strategy is the following: we will first show that any sufficiently big patch generated by a disk has to contain a good disk concentric to it. Then we show that for arbitrarily large patches, we can cover almost all of their area with good disks (meaning that we can find such disks inside of the patches). However, counting the tiles in those disks gives us that the original patch has good ratio, which is a contradiction. Along the way, we prove many technical lemmas and statements which have their own merit.

We first prove the following statement, which essentially tells us that we can always find a good small disk inside a big enough disk. The proof relies on the knowledge of the Extension theorem so we refer the reader to an exposition of it in \cite{gs1987,adams2022}

\begin{stmt}\label{stmt1}
    For each prescribed radius $L > 2U,$ there is a finite $T > L + U,$ s.t. every patch covering $\mathcal{B}_T(x)$ has a good disk centered at $x$ with radius between $L$ and $T-U.$
\end{stmt}

\begin{proof}[Proof of Statement \ref{stmt1}]

Assume the contrary, that no $T$ works. Then we can cover larger and larger disks about the origin but with no good concentric disks of the indicated size. Compactness argument from the Extension theorem of Gr\:{u}nbaum and Shepahard \cite{gs1987,adams2022} gives us a tiling from these patches. Since this is a tiling of the whole plane, it's ratio is at least $q$, so any sufficiently large disk will be good. Take one such disk centered at $0,$ $\mathcal{B}_r(0).$ Choose $r > L$ so that $\mathcal{B}_r(0)$ is good and no tile in the limit tiling is tangent to $\mathcal{B}_r(0)$. The reason for the latter requirement is that a tile just touching the boundary could result in it potentially not being counted in some of the patches from the converging subsequence. And indeed, the tiles in patches only converge to the ones in the tiling, and as such could move both away from and towards $\mathcal{B}_r(0),$ which could affect the count of the number of tiles in the patch. Take $R > r + D$. Now every tile touching $\mathcal{B}_r(0)$ is wholly contained in $\mathcal{B}_R(0).$ Thus by convergence on this larger disk $\mathcal{B}_R(0),$ we have that all the tiles touching $\mathcal{B}_r(0)$ in sufficiently late patches will have the same type and count as the tiles in the limit tiling.
However, for such patches we get a contradiction since a disk with the appropriate ratio inside them is good. And indeed, we can take $r > L$ in addition to the previous constraint, and choose $T$ such that the radius of the good disk, $r$,  is smaller than $T - U.$ This gives us the desired contradiction.
    
\end{proof}

Next we show that almost the whole patch can be covered by good disks.

To that end, we introduce $p$ as follows. For any disk size $L,$ we look at how much of the area of the whole patch $\mathcal{A}_j$, for $j$ sufficiently big, can be covered by good disks of radius at least $L.$ If for all sufficiently big $j$ we can cover at least the area of $a |\Omega_j|,$ then to $L$ we associate $a.$ Among all such values of $a,$ for all distinct $L,$ we take the supremum, and denote it by $p.$ Thus $p$ is the supremum of all $a\in [0,1]$ such that, for every $L>2U$, all sufficiently late patches $\mathcal A_j$ contain disjoint good disks of radii at least $L$ covering at least $a|\Omega_j|$.
% Described somewhat cumbersomely in plain language, $p$ is the greatest fraction of $|\Omega_j|$ that we can guarantee covering in every sufficiently late patch, using only disks whose radii exceed any prescribed lower bound.

Clearly, $0 \leq p \leq 1$, and we will show $p=1.$

Our ultimate goal is to count the tiles in these good disks to show that the whole patch is good. However, patches generated by disks can contain tiles not fully contained in their interior. Thus, a tile could be shared between several disks and counted with multiplicity in this way, which we have to take care of. One solution would be to not take those tiles into the final count for the starting patch, if we could show that the ratio of tiles in each shrunken disk remains relatively close to what it was before. This can indeed give us the adequate density of tiles in the final patch, but we find the following solution to be more elegant. What we will prove is that we can additionally require the disks to be at least $D$ apart (consider this to also be a part of the definition of $p$), which will make each tile belonging to one of the patches generated by one of them not belong to any other.

\begin{stmt}\label{stmt2}
    For any fixed $L > 2U,$ and $\eta > 0,$ every sufficiently late patch $\mathcal{A}_j$ has a finite collection of disks $\mathscr{B}_j$ such that:
    \begin{itemize}
        \item The disks lie in the patch and have pairwise distance greater than $D$;
        \item Every radius is at least $L;$
        \item Each disk is good;
        \item Together they occupy at least $(1-\eta)|\Omega_j|$ of the area of the patch.
    \end{itemize}
    In other words, $p=1.$
\end{stmt}

\begin{proof}[Proof of the Statement \ref{stmt2}]
We will prove that $p=1$ by improving any fraction below $1.$

To start, fix $L$ and take $T$ given by Statement \ref{stmt1}. By the definition of $p,$ sufficiently late patches have disjoint good disks of radius $M \gg T+D$ covering a fraction of $p-\varepsilon$ of their area. If $p=0$ we start with no disks.

Next we find new disks that will increase the total fraction of the good disks in the patch. As per Statement \ref{stmt1}, every uncovered point at distance at least $T$ from $\partial\Omega_j$ and at least $T+D$ from selected disks is the center of another good disk of radius between $L$ and $T-U.$ Note that this is not enough, as we have to prove that the fraction of the total area has increased, for which, for instance, taking a single additional good disk is not enough. Note also that $\mathcal{A}$ may contain holes, in which case $\partial \Omega$ contains both the outer boundary and the boundaries of those holes.

For the next step we need an obvious-looking geometric statement, whose proof we postpone for after this Lemma's. It essentially tells us that a boundary band of any fixed width occupies a vanishing fraction of the patch's area, for all patches of $n$ coronas. Formally,

\begin{lema}\label{band}
    For every fixed $w >0$ and every $\varepsilon > 0$, there is an $N$ such that, for every $n \geq N$ and every patch of $n$ coronas, $\mathcal{A},$ with support $\Omega,$ $$|\left \{ x: \text{dist}(x, \partial \Omega) < w \right \}| <\varepsilon \cdot |\Omega|. $$
 
\end{lema}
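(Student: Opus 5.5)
We want: the band of width $w$ around $\partial\Omega$ has area $o(|\Omega|)$ uniformly over patches of $n$ coronas. The plan is to compare two counts. The area of $\Omega$ is at least quadratic in $n$. The band area is controlled by the number of tiles lying near the boundary, and that number is at most linear in $n$.

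\textbf{Step 1 (area lower bound).} Every tile in corona $\mathcal C_i$ touches a tile of $\mathcal C_{i-1}$. Hence all tiles lie within distance $(2n+1)D$ of the starting tile, so $\Omega\subseteq \mathcal B_{(2n+2)D}(c)$, where $c$ is the centre of the central tile. Conversely, the last condition of the corona definition says $\bigcup_{j\le i-1}\mathcal C_j$ lies in the interior of $\bigcup_{j\le i}\mathcal C_j$. So any segment from the central tile to $\partial\Omega$ must cross every corona. Each tile it crosses has in-radius at least $u$, but a segment could cross a tile near a corner in a short piece. To get a genuine metric bound, I would instead show $\mathcal B_{cn}(c)\subseteq\Omega$ for some constant $c>0$, as follows. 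In the situation at hand (grid placement, Theorem \ref{generalizacija}), tiles sit on the lattice $\ell\mathbb Z^2$ up to bounded perturbation. The $i$-th corona must cover a closed neighbourhood of the previous union, so it contains all lattice cells adjacent to that union. Inducting gives the lattice ball of radius about $i$. Therefore $|\Omega|\ge \kappa n^2$ for some $\kappa>0$ depending only on the protoset.

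\textbf{Step 2 (band upper bound).} Any point $x$ with $\mathrm{dist}(x,\partial\Omega)<w$ lies within $w+D$ of some tile that meets $\partial\Omega$, that is, a tile not surrounded by others. So the band is covered by $\bigcup_{T\in\mathcal E}\mathcal B_{w+D}(c_T)$, where $\mathcal E$ is the set of boundary tiles, giving $|\text{band}|\le |\mathcal E|\,\pi(w+D)^2$. Every boundary tile must lie in the last corona $\mathcal C_n$: a tile of $\mathcal C_i$ with $i<n$ is in the interior of the union up to level $i+1$, so it does not reach $\partial\Omega$. This includes hole boundaries, since those holes also avoid the earlier union's interior containment. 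Thus $|\mathcal E|\le|\mathcal C_n|$.

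\textbf{Step 3 (bounding $|\mathcal C_n|$ linearly) — the main obstacle.} By Step 1, all tiles of $\mathcal C_n$ touch $\bigcup_{j<n}\mathcal C_j$ and lie in an annular region. A naive area bound gives $|\mathcal C_n|\le \pi((2n+2)D)^2/a$, which is only quadratic and useless. So I will use minimality and the grid structure. Each tile of $\mathcal C_n$ touches a tile of $\mathcal C_{n-1}$ that in turn reaches the boundary of $\bigcup_{j<n}\mathcal C_j$. It therefore suffices to bound, by induction, the number of tiles of $\bigcup_{j<n}$ meeting its own boundary. Each such tile has boundedly many neighbours, at most $K=\pi(2D)^2/a$ by disjointness. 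In the lattice setting, the union after $i$ coronas is contained in the lattice ball of radius $Ci$, which has $O(i)$ boundary cells. I would therefore combine the containment $\Omega\subseteq\mathcal B_{(2n+2)D}$ with the lattice structure: tiles of $\mathcal C_n$ occupy cells at lattice distance at least $c n$ from the centre, by Step 1 applied to level $n-1$, and at most $Cn$. If $c$ and $C$ agree up to an additive constant this gives a linear bound. Securing this matching (a corona adds a layer of bounded thickness) is where the hypothesis that coronas follow the grid must be used essentially. If it fails, I would fall back on a Steiner-type perimeter argument.

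\textbf{Conclusion.} Once $|\mathcal C_n|\le C'n$, we get
$$|\text{band}|\le C'n\,\pi(w+D)^2<\varepsilon\kappa n^2\le\varepsilon|\Omega|$$
for all $n\ge N:=\lceil C'\pi(w+D)^2/(\varepsilon\kappa)\rceil+1$.
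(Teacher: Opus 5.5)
Your Step 2 is the same as the paper's final step, so everything rests on Step 3, and there the proposal has a genuine gap: the bound $|\mathcal{C}_n|\le C'n$ is never proved.

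\begin{itemize}
\item The bounded-degree induction only yields $|\mathcal{C}_n|\le K|\mathcal{C}_{n-1}|$, which is exponential growth.
\item Knowing that $\bigcup_{j<n}\mathcal{C}_j$ lies inside a lattice ball of radius $Cn$ says nothing about how many of \emph{its} tiles meet \emph{its} boundary. A subset of a ball, especially one allowed to have holes as coronas here are, can have quadratically many boundary cells.
\item The decisive claim, that inner and outer radii differ only by an additive constant, is left as a hope, with an unspecified ``Steiner-type'' fallback. If $c<C$, the annulus between radii $cn$ and $Cn$ holds order $n^2$ cells and nothing is gained.
\end{itemize}

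Moreover, the lattice structure you invoke is not available for this lemma. It concerns arbitrary $n$-corona patches of the planar protoset $\{\mathcal{P}_1,\mathcal{P}_2\}$, used in Lemma \ref{lem} for Theorem \ref{thm1}. These notched rectangles have flat top and bottom edges, so nothing forces placement on $\ell\mathbb{Z}^2$. Even under the hypothesis of Theorem \ref{generalizacija}, the planar layers are columns over $\ell\mathbb{Z}$ with arbitrary real vertical offsets. Your scheme does work for the idealized square lattice, where the $i$-th union is forced to be the $(2i+1)\times(2i+1)$ block, but not beyond it without a real argument.

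The paper never proves, or needs, a linear bound. The missing idea is that $t(\mathcal{C}_n)=o(t(\mathcal{A}))$ already suffices, since $|\Omega|\ge a\,t(\mathcal{A})$, and that this follows from quadratic growth of contact-graph balls alone.

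Statement \ref{stmt3} obtains that growth with no grid assumption. A segment of length $\ell$ meets at most $(2D\ell+\pi D^2)/a$ tiles, and these tiles form a path in the contact graph. A segment starting in a tile $T$ with $d_G(T,T_0)\le n-s$ that exits $\Omega$ must end in a tile of $\mathcal{C}_n$, which is at graph distance $n$ from $T_0$. Hence a disk of radius $\alpha s$ about a point of $T$ lies in $\Omega$. This also repairs your Step 1: bound from above, by area, the number of tiles a segment meets, rather than bounding from below the length of each crossing.

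Statement \ref{stmt4} then runs a multiscale packing argument. For $t=s,8s,8^2s,\dots$, take a maximal $20t$-separated set of last-corona tiles, move each $5t$ steps toward $T_0$, and take the resulting disjoint radius-$t$ graph balls. By the two-sided ball estimates, each scale places at least $\beta\,t(\mathcal{C}_n)$ tiles in the depth band $[4t,7t)$. These bands are disjoint across scales, and there are about $\log_8 n$ scales, giving $t(\mathcal{C}_n)=O(t(\mathcal{A})/\log n)$. That is weaker than your target, but it is exactly what your Step 2 needs.
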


Call a point \textit{eligible} if it satisfies the above requirement of being sufficiently far away from both the boundary and existing disks. Then the set of all eligible points $E,$ is defined by $$E = \left \{ x \in \Omega_j: \text{dist}(x,\partial\Omega_j) > T, \, \,  \text{dist}(x, \text{old disks}) > T + D  \right \}.$$ As distance to a fixed set is a continuous function, for any point $x\in E$ we can find an open ball around it, meaning that $E$ is open. $E$ is thus measurable (we can represent it as a countable union of squares whose closures belong to $\text{int }E$ and since squares are measurable, and a countable union of measurable sets is still measurable, $E$ is as well).

Next we claim that all but at most $(O((T+D)/M) + o_j(1)) \cdot |\Omega_j|$ of the uncovered area belongs to $E.$  Indeed, the only parts of patches that are not eligible are the $T$-wide band around the boundary of the patch (which, according to Lemma \ref{band}, has a negligible area compared to the total, $o_j(\Omega_j)$), and the $T$-neighbourhoods outside of the disks. Each such annulus has an area at most $O((T+D)/M)$ times the area of the disk it is circumscribing, which gives us a total area of $O((T+D)/M)|\Omega_j|.$ Recall that we took $M \gg T+D$, so both types of non--eligible areas are indeed arbitrarily small. If we denote by $a_j$ the fraction of the patch $\mathcal{A}_j$ that our starting set of good disks, $\mathscr{B}_j$ has covered, we have $a_j \geq p-\varepsilon,$ so the lower bound on $|E|$ is given by 

\begin{equation}\label{eq1}
|E| \geq (1-a_j -O((T+D)/M) - o_j(1)) \cdot |\Omega_j|.
\end{equation} This bound will come in handy slightly later in the proof.

Now we choose disjoint disks from these candidates greedily. While we cannot guarantee the existence of a good disk with the largest possible radius (they could be approaching a supremum without ever reaching it), we can always choose a disk whose radius exceeds half of the supremum of the largest possible radii of the good circles that can still be added to our set $\mathscr{B}_j.$ We add that disk to our collection and discard all remaining candidate disks whose distance from it is at most $D$ -- we are keeping track of all the possible disks that can be added to our collection and just discard these from it (note, in particular, that we do not change $E$ in this process). We claim that the center of every discarded disk lies in the fourfold enlargement of the selected disk. Here is why. Assume that our greedy selection procedure selects a disk of radius $r$ and discards a candidate disk of radius $s$ because they intersect. Since $r > \frac{R}{2},$ and since the disk of radius $s$ was a candidate, it satisfied $s \leq R < 2r$. As the disks intersect, their centers are at a distance of at most $r + s + D\leq 3r +D < 4r$, since we took $r \geq L > 2U \geq D,$ which is what we wanted. Observe also that this greedy algorithm terminates in finite time since the patch is bounded and each disk has radius at least $L.$

Thus we have that $E \subseteq \bigcup_i 4D_i,$ where $D_i$ is the new collection of selected disks that we obtained from the algorithm. This gives: $$|E| \leq \sum_i |4 D_i| = 16 \sum_i |D_i| = 16\Bigl|\bigcup_i D_i\Bigr|,$$ where the last equality comes from disks being disjoint. Thus, using (\ref{eq1}), the new disks cover an area of at least $\frac{1}{16}|E| \geq \frac{1}{16}(1-a_j -O((T+D)/M) - o_j(1)) \cdot |\Omega_j|$, which will be almost $\frac{1}{16}(1-p).$ Together with the starting collection of disks, $\mathscr{B}_j,$ we would have a total percentage of almost $p + \frac{1}{16}(1-p)$ covered by good disks, which will be an increase, and a contradiction. However, there are a few error terms that we have to take care of.

Going back to the calculation, together with the starting good disks from $\mathscr{B}_j$, which cover an area of $a_j |\Omega_j|,$ and using that $a_j \geq p-\varepsilon,$ we have that at least the following percentage of $|\Omega_j|$ is covered by good disks: $a_j + \frac{1}{16} (1-a_j -O((T+D)/M) - o_j(1)) \geq p + \frac{1-p}{16} - \frac{15\varepsilon + O((T+D)/M) + o_j(1)}{16}.$ Term $\frac{1-p}{16}$ is a constant, and we can make the last term, $\frac{15\varepsilon + O((T+D)/M) + o_j(1)}{16}$ be smaller than it by choosing first a small enough epsilon, then taking $M \gg T+D$ and taking $j$ sufficiently large so that all the sufficiently large patches will have this term be smaller than $\frac{1-p}{16}.$ This ultimately gives us a contradiction since we have shown that we can cover a percentage larger than $p$ with good disks of any sufficiently large patch, which goes against the definition of $p$.

\end{proof}

Now we are ready to finish the proof. The main idea left is to take the area estimate of the good disks in the patch and transform it into tile count, which will ultimately give that the patch has a good ratio of tiles, a contradiction.

Choose the good disks from Statement \ref{stmt2}, $\mathcal{B}_{r_i}(x_i)$, with $r_i > L > D$, covering at least $(1 -\eta)|\Omega_j|$ area. Assign each tile that touches any of the disks to the disk it touches. Since we required in the statement of Statement \ref{stmt2} that disks be at a pairwise distance at least $D,$ this assignment is unique.
% Shrinking the radius of every such disk by $D,$ we obtain a collection of disks such that every tile touching any of them is fully contained in one of the original disks. Assign each such tile to the appropriate original disk it touches. By shrinking disks, we lose at most $\frac{2D}{L}$ of the total area: $\sum_i \pi (r_i^2 - (r_i^2 - 2r_iD + D^2)) = \sum_i \pi (2r_iD - D^2) \leq \frac{2D}{L}\sum_i \pi r_i^2 \leq \frac{2D}{L}|\Omega_j|.$

% Every unassigned tile lies outside all the shrunken disks. Therefore, the area occupied by unassigned tiles is at most $|\Omega_j| - \sum_i \pi (r_i - D)^2 = |\Omega_j| - \underbrace{\sum \pi r_i^2}_{\leq (1-\eta)|\Omega_j|} + \underbrace{\sum2\pi Dr_i}_{\leq \frac{2D}{L}|\Omega_j|} - \sum\pi D^2 \leq (\eta + \frac{2D}{L})\cdot |\Omega_j|.$

Next, let $N$ be the total number of tiles and $R$ the number of tiles left unassigned. Then the area estimate gives us: $R \cdot a \leq \eta|\Omega_j| \leq \eta\cdot N \cdot A$ and ultimately \begin{equation}\label{eq3}\frac{R}{N} \leq \frac{A}{a}\cdot \eta.\end{equation} A crucial observation is that we can make this fraction as small as needed by choosing some $L$ and a sufficiently small $\eta$. Then Statement \ref{stmt2} gives us $j$ large enough so that the area estimate we rely on here really holds. We thus make the fraction of unassigned tiles arbitrarily small.

Since each assigned collection is good, we have $N_{1,i} > q' N_{2,i},$ for a collection corresponding to $\mathcal{B}_{r_i}(x_i).$ Denote $m = N_{1,i} + N_{2,i}.$ Then $N_{2,i} < \frac{m}{q'+1}$, so we calculate $N_{1,i} - b N_{2,i} = m - (b+1)N_{2,i} > m(1 - \frac{b+1}{q'+1}) = \frac{q' - b}{q' + 1}\cdot m \equiv \gamma (N_{1,i} + N_{2,i}),$ for $\gamma = \frac{q'-b}{q'+1} > 0.$

Now we estimate $N_1 - b N_2.$ For assigned tiles, we have the previous bound, and each unassigned tile can contribute at worst $-b$ to $N_1 - bN_2$. Adding all the contributions, we get $$N_1 - bN_2 > \gamma (N-R) - bR = \gamma N - (\gamma + b)R.$$ We want the right-hand side to be positive to get a contradiction, so, equivalently, we want $\frac{R}{N} < \frac{\gamma}{\gamma + b}.$ Now choosing the fraction of the unassigned tiles to be below $\frac{\gamma}{\gamma + b},$ which we can do per (\ref{eq3}), we get a contradiction with $N_1 \leq bN_2.$ This finishes the proof.
\end{proof}

Finally, we prove Lemma \ref{band}:

\begin{bandrepeat}
    For every fixed $w >0$ and every $\varepsilon > 0$, there is an $N$ such that, for every $n \geq N$ and every patch of $n$ coronas, $\mathcal{A},$ with support $\Omega,$ $$|\left \{ x: \text{dist}(x, \partial \Omega) < w \right \}| <\varepsilon \cdot |\Omega|. $$ 
 
\end{bandrepeat}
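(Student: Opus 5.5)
The plan is to compare the area of the boundary band, which grows at most linearly in $n$ (it is covered by boundedly many tiles per unit of boundary), with $|\Omega|$, which grows quadratically in $n$. The setting is planar: patches of $n$ coronas of the protoset $\{\mathcal{P}_1,\mathcal{P}_2\}$, where each tile has area in $[a,A]$ and diameter at most $D$.

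First step: a lower bound on the tile count. Each corona $\mathcal{C}_i$ must contain the previous union in its interior. So the union $\Omega^{(i)}$ of the first $i$ coronas contains the disk of radius roughly $i\cdot c$ around the central tile, for some $c>0$ depending on the protoset. I would argue this by induction: $\Omega^{(i-1)}$ lies in the interior of $\Omega^{(i)}$, and every tile of $\mathcal{C}_i$ touches $\Omega^{(i-1)}$. Hence every point of $\partial\Omega^{(i)}$ is separated from $\Omega^{(i-1)}$ by at least one tile of $\mathcal{C}_i$, giving distance at least $u$ (the inparameter). I will be honest that this is the delicate point: a thin tile could cover a boundary point while being traversed in a short direction. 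A cleaner fallback is to use only the counting fact $t(\mathcal{C}_i)\geq 1$ together with the outer boundary of $\Omega^{(i-1)}$, which has length $\ell_{i-1}$. Covering it requires at least $\ell_{i-1}/(\pi D)$ tiles, since each tile has perimeter at most $\pi D$ by convexity of the hull bound. Either way one gets $|\Omega|\geq a\, t(\mathcal{A})\geq c' n^2$.

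Second step: an upper bound on the band in terms of the boundary. Every point $x$ with $\operatorname{dist}(x,\partial\Omega)<w$ lies within $w$ of a point of $\partial\Omega$, and $\partial\Omega$ is contained in the union of boundaries of tiles that touch $\partial\Omega$. So the band lies in the union of $(w+D)$-neighbourhoods of the boundary tiles. This gives $|\text{band}|\leq \pi(w+D)^2\cdot b(\mathcal{A})$, where $b(\mathcal{A})$ is the number of tiles meeting $\partial\Omega$. It remains to show $b(\mathcal{A})=o(t(\mathcal{A}))$ uniformly over patches of $n$ coronas.

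Third step, the key one. Every tile meeting $\partial\Omega$ (outer boundary or hole boundary) must lie in the last corona or be near a hole. For holes, I would use the interior condition: $\Omega^{(n-1)}$ lies in the interior of $\Omega$, so all hole boundaries and the outer boundary avoid $\Omega^{(n-1)}$. Hence boundary tiles belong to $\mathcal{C}_n$, since a tile of $\mathcal{C}_k$ with $k<n$ lies in $\Omega^{(n-1)}\subseteq\operatorname{int}\Omega$. Thus $b(\mathcal{A})\leq t(\mathcal{C}_n)$.

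Fourth step: ratio of the last corona to the whole. We need $t(\mathcal{C}_n)/\sum_{i\le n}t(\mathcal{C}_i)\to 0$ uniformly. This is the main obstacle, since an adversarial sequence of coronas could have the last one huge. I would bound $t(\mathcal{C}_n)$ by a packing argument. Every tile of $\mathcal{C}_n$ touches a tile of $\mathcal{C}_{n-1}$, and tiles have disjoint interiors and area at least $a$. So $t(\mathcal{C}_n)\le K\,t(\mathcal{C}_{n-1})$ with $K=\pi(2D)^2/a$, as each tile has boundedly many neighbours. Iterating gives only exponential control, which is insufficient in general.

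So instead I would exploit the actual setting of Theorem \ref{generalizacija}, where tiles are placed on the grid $\ell\mathbb{Z}^2$ with bounded shapes. There, $\Omega^{(i)}$ is essentially a union of unit cells. The condition that each corona touches the previous one and encloses it forces $\Omega^{(n)}\subseteq$ the $O(n)$-ball and $\Omega^{(n)}\supseteq$ the $\Omega(n)$-ball in grid distance. This yields $t(\mathcal{C}_n)\le$ (number of cells at grid distance about $n$ from the centre) $=O(n)$, while $t(\mathcal{A})\ge c'n^2$. Hence the band has area $O(n)$ against $|\Omega|=\Omega(n^2)$, and choosing $N$ with $C n/(c'n^2)<\varepsilon$ finishes the proof.
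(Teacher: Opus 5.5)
\textbf{The gap is in Step 4.} Your Steps 2 and 3 match the final step of the paper's proof. The band lies in the union of the disks $\mathcal{B}_{w+D}(z_T)$, $T\in\mathcal{C}_n$, and every tile meeting $\partial\Omega$ lies in the last corona. So everything reduces to showing $t(\mathcal{C}_n)=o(t(\mathcal{A}))$ uniformly, and this is where the proposal breaks.

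The two-sided bound $\mathcal{B}_{c(n-1)}\subseteq\Omega^{(n-1)}$, $\Omega^{(n)}\subseteq\mathcal{B}_{Cn}$ only confines $\mathcal{C}_n$ to an annulus of area about $\pi(C^2-c^2)n^2$. That annulus can hold $\Theta(n^2)$ tiles. The cells at distance ``about $n$'' (between $cn$ and $Cn$) number $\Theta(n^2)$, not $O(n)$, so nothing in the sandwich stops the last corona from being a fixed fraction of the patch. An $O(n)$ bound would need $\mathcal{C}_n$ to be an exact sphere of a fixed lattice metric, such as the Chebyshev sphere of radius $n$ in a rigid square lattice. Even that requires an argument that coronas are contact-graph spheres and that the patch has no interior holes.

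The rigidity you invoke is also not available:
\begin{itemize}
\item In Theorem \ref{generalizacija} the copies are $g(\mathcal{P})+(m\ell,n\ell,t)$ with $t\in\mathbb{R}$ arbitrary. A planar section therefore consists of columns with continuous vertical offsets, not the grid $\ell\mathbb{Z}^2$.
\item Lemma \ref{band} is stated for, and applied in the proof of Lemma \ref{lem} to, arbitrary $n$-corona patches of the planar protoset.
\end{itemize}
Restricting to a grid changes the statement without closing the argument.

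\textbf{The missing idea is the paper's multiscale counting.} It gives $o(n^2)$ (in fact $O(n^2/\log n)$) with no lattice structure.
\begin{itemize}
\item First note that the tiles of $\mathcal{C}_n$ are exactly those at contact-graph distance $n$ from $T_0$.
\item Statement \ref{stmt3}: a graph ball of radius $s$ around a tile of depth at least $s$ contains at least $cs^2$ tiles. The reason is that a segment of length $\alpha s$ meets fewer than $s$ tiles. Hence the disk of radius $\alpha s$ cannot reach $\partial\Omega$, and it is covered by that ball.
\item Statement \ref{stmt4}: a maximal $20s$-separated family in the last $h$ coronas has $\gtrsim t(T_h)/s^2$ members. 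Moving each member $5s$ steps inward gives disjoint radius-$s$ balls carrying at least $\beta\,t(T_h)$ tiles in the depth band $[4s,7s)$.
\item Repeating at scales $s,8s,8^2s,\dots$ produces $m\asymp\log n$ pairwise disjoint bands, so $t(T_h)\le t(\mathcal{A})/(m\beta)\to 0$.
\end{itemize}

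\textbf{Smaller point in Step 1.} The claim that each corona adds width at least $u$ is false in general, as you suspected. An unfilled deep nick of a last-corona tile can place a boundary point much closer than $u$ to $\Omega^{(i-1)}$. Your fallback needs an isoperimetric inequality to turn $t(\mathcal{C}_i)\gtrsim\ell_{i-1}$ into quadratic growth. Also, a non-convex tile's perimeter is bounded by some constant, not by $\pi D$. The paper gets $t(\mathcal{A})=\Theta(n^2)$ directly from the hole-free disk of radius $\alpha n$.
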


\begin{proof}[Proof of Lemma \ref{band}]

We first prove the following Statement.
Prior to that, we introduce the \textit{contact graph} of the patch $\mathcal{A}.$ Contact graph $G$ of a patch $\mathcal{A}$ is obtained by taking a vertex for each tile from $\mathcal{A}$ and connecting two vertices if and only if the corresponding tiles touch, potentially only at one point.
\begin{stmt}\label{stmt3}
    Let $\mathcal{A}$ be a patch of $n$ coronas around tile $T_0.$ Denote by $\mathscr{B}_s(T)$ a ball in the contact graph: $$\mathscr{B}_s(T) = \left \{ T' \in \mathcal{A} : d_G(T, T') \leq s \right \}.$$ Then there exist constants $s_0 \in \mathbb{N}$ and $c,C >0$ which depend only on the protoset s.t. the following holds. For every $s_0 \leq s \leq n$ and every tile $T$ satisfying $d_G(T,T_0)\leq n-s$, i.e. for all the tiles at least $s$ layers from the boundary, $$cs^2 \leq |\mathscr{B}_s(T)| \leq C(s+1)^2.$$ In particular, the whole patch has $\Theta(n^2)$ many tiles.
 \end{stmt}

 \begin{proof}[Proof of Statement \ref{stmt3}]

 To see why the upper bound holds, observe that all the tiles corresponding to the vertices in a radius--$s$ ball in $G$ are contained in a disk of radius $(s+1)D$. Since each tile has area at least $a,$ we have at most $\frac{D^2\pi}{a}(s+1)^2$ many tiles. 

 To get the lower bound, observe first that any tile meeting a line segment of length $\ell$ lives in its $D$-neighbourhood. Total area of this neighbourhood is $2D\ell + D^2\pi$ and since tiles are disjoint, at most $\frac{2D\ell + D^2\pi}{a} = O(\ell+1)$ tiles can meet the segment.

 Tiles intersecting this segment form a path in $G.$ Therefore, a line of length $\ell$ meets at most $C_1\ell+ C_0$ tiles, for some constants $C_1, C_0.$ Taking $\alpha >0$ sufficiently small, so that $C_1 \alpha s + C_0 < s,$ we have that a line segment of length $\alpha s$ meets less than $s$ tiles. We next argue that there are no holes inside of the patch generated by $\mathcal{B}_{\alpha s}(x),$ for a point $x$ inside $T$. Let $y$ be any point at distance at most $\alpha s$ from $x.$ Suppose that the segment between $x$ and $y$ leaves the support $\Omega$ of $\mathcal{A},$ and stop at a first point $z$ on $\partial \Omega$. Since earlier coronas are completely surrounded, a tile $Q$ containing $z$ belongs to the last corona $\mathcal{C}_n,$ so $d_G(T_0,Q) = n.$ Then triangle inequality gives $d_G(T,Q) \geq n - d_G(T,T_0) \geq s.$ However, by the previous argument, the line segment from $x$ to $z$ meets fewer than $s$ tiles; these tiles make a path of length less than $s$ in $G$, i.e. $d_G(T,Q) < s,$ which is a contradiction.
 For every $y \in \mathcal{B}_{\alpha s} (x)$ line connecting $x$ and $y$ touches less than $s$ tiles, so the whole patch will be contained in $\mathscr{B}_s(T).$ Each tile has area at most $A$ so $t(\mathscr{B}_{s}(T)) \geq \frac{\pi \alpha^2}{A}s^2$.

 In particular, taking $s = n$ and $T = T_0$, we get that the total number of tiles is $\Theta(n^2).$
     
 \end{proof}

 Reflecting on the previous Statement, and in light of the pathological examples of protosets where holes can arise in valid coronas, which could later be extended, it is quite interesting that we can guarantee that a linear width around the starting tile will not contain any holes. To get this, take any $\alpha$ so that $C_1 \alpha n + C_0 < n$ has a solution for sufficiently large $n,$ i.e. such that $C_1 \alpha < 1,$ or $\alpha < \frac{a}{2D}.$ Then for $n > \frac{\pi D^2}{a -2D\alpha}$ the patch generated by $\mathcal{B}_{n\alpha}(x)$ will not contain any holes, for any point $x$ in the interior of $T_0$. What's more, we can easily turn this into an area estimate: denote by $r = \text{dist}(x, \partial \Omega).$ Then a disk centered at $x$ and of radius $r$ is hole free, and from the previous calculations, it meets at most $\frac{2D\ell +D^2 \pi}{a}$ tiles. Since it goes to the boundary, it meets $n+1$ tiles, so $n+1 \leq \frac{2D\ell +D^2 \pi}{a}$, which simplifies to $r \geq \frac{a}{2D}n - \frac{D^2 \pi - a}{2D} \equiv \frac{a}{2D}n - c',$ for a constant $c'$. In particular, the radius of this disk is linear in $n$ and so we have a disk whose area is a constant fraction of the whole (since the total area is on the order of $n^2$ -- which follows from the previous Statement) that is hole-free.  Since this bears its own merit, we state it as a corollary:

 \begin{cor}
     For any $0< \alpha < \frac{a}{2D}$ and any sufficiently large $n$, the disk $\mathcal{B}_{n\alpha}(x)$ is fully contained in any $n$-corona patch around $T_0,$ for any point $x$ in the interior of $T_0$. As a consequence, there is a disk which covers a fraction of the total area of the patch that is hole-free.
 \end{cor}

Next, we prove that the number of tiles in the last constant number of coronas is a vanishing portion of the whole.

\begin{stmt}\label{stmt4}
    For a fixed $h \geq 1,$ and any patch of $n$ coronas, $\mathcal{A},$ let $T_h$ represent the tiles in the last $h$ coronas (coronas $n-h+1, ..., n$). Then $$\lim_{n \rightarrow \infty} \frac{t(T_h)}{t(\mathcal{A)}}=0.$$
\end{stmt}

\begin{proof}[Proof of Statement \ref{stmt4}]

Choose an integer $s \geq \max\left \{ h, s_0\right \}$ s.t. $8s \leq n.$ Choose also a maximal family of tiles in $T_h$ whose pairwise graph distances in $G$ exceed $ 20s$. Due to their maximality, the radius--$20s$ balls cover $T_h.$ Statement \ref{stmt3} gives us that each such ball contains $O(s^2)$ many tiles. Denoting by $N$ the number of centers of distinct balls in $T_h,$ we get: $N \cdot c_1 \cdot s^2 \geq t(T_h)$, so \begin{equation}\label{eq2}
    N \geq \frac{t(T_h)}{c_1\cdot s^2} \equiv c_2 \frac{t(T_h)}{s^2},
\end{equation} 
for $c_2 = \frac{1}{c_1}$ constant.

Next, from each center we move $5s$ steps towards the root, along the shortest path. Take the radius--$s$ balls from these points, and denote them by $\mathscr{B}_s'.$ The new centers are still more than $10s$ apart, so the balls in $\mathscr{B}_s'$ are disjoint. Statement \ref{stmt3} gives that each such ball has at least $c\cdot s^2$ many tiles (and we can apply it since $s \geq s_0$), and all of them lie between depths $4s$ and $7s$ from the outermost corona (more precisely, between depths $4s$ and $6s + h-1$ since we start from the corona of depth at most $h-1$). Recalling (\ref{eq2}), this gives that they contain at least $$cs^2 \cdot c_2 \frac{t(T_h)}{s^2} \equiv \beta \cdot t(T_h),$$ for $\beta$ a constant, which is a positive fraction of the number of tiles in $T_h.$

Now we repeat this procedure with $t$ replacing $s$, for $t = 8s, 8^2s, 8^3s,\dots,$ as long as $8t \leq n.$ More precisely, we again choose a maximal set of centers in $T_h,$ pairwise separated by more than $20t,$ then move each center $5t$ steps inward and put a radius--$t$ graph ball around each new center. The resulting balls lie progressively deeper in the patch for bigger values of $t$, in bands of depth $[4t, 7t).$ These bands are pairwise disjoint since $4 \cdot 8^{i+1}s > 7 \cdot 8^is.$ Each again contains at least the same positive fraction of the tiles in $T_h$, $\beta \cdot T_h$. Since they all lie in the original patch, as $n$ increases, $t(T_h)$ must present a vanishing fraction of its total tile count.

More formally, let $m$ be the number of scales $s, 8s, 8^2s, ...$ for which the construction is repeated. At each step we obtain a band containing at least $\beta \cdot t(T_h)$ many tiles, for a constant $\beta$ independent of $t.$ Since these bands are pairwise disjoint and lie in $\mathcal{A},$ $t(\mathcal{A}) \geq m \cdot \beta \cdot t(T_h),$ and so $$\frac{t(T_h)}{t(\mathcal{A})} \leq \frac{1}{m \beta} \rightarrow 0$$ since $m \rightarrow \infty$ as $n\rightarrow \infty.$

\end{proof}

We are now ready to finish the proof of Lemma \ref{band}.

To that end, denote the band of width $w$ around and inside $\partial \Omega$ by $I_w$. Every point of $\partial \Omega$ belongs to a tile in the last corona $\mathcal{C}_n.$ Choose a point $z_T$ in each such tile $T.$ Since diameters of tiles are at most $D,$ $I_w \subseteq \bigcup _{T \in \mathcal{C}_n} \mathcal{B}_{w + D}(z_T).$
Consequently, $$|I_w| \leq \pi (w +D)^2 \cdot t(\mathcal{C}_n) = \pi (w +D)^2 \varepsilon_n \cdot t(\Omega) \leq \frac{ \pi (w +D)^2 \varepsilon_n}{a} \cdot |\Omega| = o(|\Omega|),$$ where we used Statement \ref{stmt4} with $h=1$ to write $t(\mathcal{C}_n) = \varepsilon_nt(\Omega),$ for $\varepsilon_n \rightarrow 0$ and the fact that tiles have area at least $a.$
    
\end{proof}

An easier attempt at a proof of Lemma \ref{lem} is to try to prove an analogue of the Extension theorem -- that if we have patches that are all bad then we will have a tiling that is bad as well. However, the following construction shows that the direct compactness argument from the Extension theorem doesn't work in this case. For each patch $\mathcal{A}_j$, denote by $R_j$ its radius, and fill the approximations of disks of radius $\frac{R_j}{10}$ with one type of squares -- say, with sides of size 2 -- and the rest of the disk with unit squares. Calculating the ratio gives that there are approximately $396$ times more unit squares than the size 2 squares in each patch. However, the tiling that is obtained by the compactness argument of the Extension theorem gives is the one in which the only tile that appears is square of size 2, which gives us the predominance of the other tile than in the patches (technically, we defined $\mathcal{P}_1$ to be the prevalent one, but when a protoset is fixed we define the ratio to be the limit of one of the two fixed ratios, which in this case gives us the exact opposite of what we wanted). However, this does not imply that the implication "arbitrarily large bad patches exist" $\Rightarrow$ "there exists a bad tiling" is false. One might be tempted to read Lemma \ref{lem} as saying the exact contrapositive of that -- that if a protoset has ratio at least $q$ then there cannot exist a sequence of arbitrarily large bad patches -- but this is not the case. What the Lemma states is that the ratios of all the patches cannot all be smaller than some fixed number smaller than $q,$ not that they cannot approach it from below. Thus, the status of the Extension theorem for ratio $q$ tilings is still unresolved.

However, requiring something slightly stronger gives somewhat of an analogue of the Extension theorem: if the only way a patch of radius $r$ can be formed is in such a way that it has ratio at least $q$ then the protoset will have ratio (if it exists) at least $q$. Indeed this is immediate since then any patch has ratio at least $q$ and we get that the limit is at least $q$. Note, however, that the limit need not exist in this case.

\subsection{Unique ratio}

In this section we prove that Definition \ref{ratio} can be weakened to say that the tiling is of ratio $q$ if for \textit{some} point $P$ $\lim_{r \rightarrow\infty} \frac{t_1(r,P)}{t_2(r,P)}=q $ as opposed to the previous restriction of it being true for any point $P.$ In other words, we show that two points in the same tiling have the same ratio if one exists. In order to prove this, we will need a modification of the Normality Lemma \cite[Theorem~3.2.2]{gs1987}. Before stating the theorem we recall that a tiling is said to be $\textit{normal}$ if: \begin{itemize}
    \item Each tile in it is a closed topological disk;
    \item The circumparameter and inparameter of the protoset of the tiling exist;
    \item The intersection of any two tiles in a tiling is a connected set.
\end{itemize}
It is easily seen that indeed any tiling by $\{\mathcal{P}_1, \mathcal{P}_2 \}$ satisfies these three criteria.

\begin{lema}[Modification of Normality Lemma]\label{lem:norm}
If $\mathcal{T}$ is a normal tiling by $\mathcal{P} = \left \{\mathcal{P}_1, \mathcal{P}_2 \right \}$, and if $\mathcal{T}$ has ratio $q$ at $P,$ then for arbitrary $x > 0:$ $$\lim_{r \rightarrow \infty} \frac{t_1(r+x,P)}{t_1(r,P)} = 1.$$ Equivalently, \begin{equation}\label{norm}
    \lim_{r\rightarrow \infty} \frac{t_1(r+x,P) - t_1(r,P)}{t_1(r,P)} = 0.
\end{equation}
    
\end{lema}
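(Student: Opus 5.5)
We follow the classical proof of the Normality Lemma in Gr\"unbaum and Shephard: the tiles added when the radius grows from $r$ to $r+x$ live in a thin annulus, whose area is linear in $r$, while $t_1(r,P)$ grows quadratically. The ratio hypothesis is what lets us pass from the total tile count to the count of $\mathcal{P}_1$ alone.

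\textbf{Step 1 (upper bound on new tiles).} Every tile counted in $t_1(r+x,P)-t_1(r,P)$ meets $\mathcal{B}_{r+x}(P)$ but not $\mathcal{B}_r(P)$. It therefore lies inside the annulus $\{y : r-D \leq |y-P| \leq r+x+D\}$, since tile diameters are at most $D$. Tiles have disjoint interiors and area at least $a$, so
\[
0\leq t_1(r+x,P)-t_1(r,P)\leq \frac{\pi\bigl((r+x+D)^2-(r-D)^2\bigr)}{a}=O(r),
\]
with constants depending only on $x,D,a$.

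\textbf{Step 2 (quadratic lower bound on $t_1$).} The disk $\mathcal{B}_r(P)$ is covered by the tiles meeting it, each of area at most $A\leq\pi U^2$. Hence $t_1(r,P)+t_2(r,P)\geq \pi r^2/A$. This is where the ratio hypothesis enters. Since $t_1/t_2\to q$ and $q>0$ (here $q>1$), for large $r$ we have $t_2(r,P)\leq \frac{2}{q}\,t_1(r,P)$. Therefore
\[
t_1(r,P)\geq \frac{\pi r^2}{A(1+2/q)}.
\]
Note that finiteness of $q$ is needed to bound $t_2$ relative to $t_1$. The counts are also implicitly required to be positive for the ratio to be defined.

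\textbf{Step 3 (conclusion).} Dividing the bound of Step 1 by that of Step 2 gives a quotient of order $O(r)/\Theta(r^2)\to 0$. This is \eqref{norm}, and the first formulation follows immediately.

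The main obstacle is Step 2, namely guaranteeing that $\mathcal{P}_1$ is not asymptotically negligible. Without the ratio hypothesis, $t_1$ could grow slowly, and the quotient of Step 1 by $t_1(r,P)$ need not vanish. The finite positive limit $q$ is exactly what rules this out. Normality is used only to ensure the parameters $u,U$ exist, so that $a>0$ and $A<\infty$.
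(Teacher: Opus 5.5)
Your proof is correct and follows essentially the same route as the paper's: a linear-in-$r$ annulus bound on the newly added tiles, divided by a quadratic lower bound on $t_1(r,P)$ that comes from $t(r,P)\ge \pi r^2/A$ together with the ratio. The one difference is that you get $t_2\le \frac{2}{q}t_1$ directly from the hypothesis $t_1/t_2\to q$, whereas the paper invokes Lemma~\ref{lem} for this, which the hypothesis makes unnecessary; note also that your Step 2 uses only the positivity of $q$, not its finiteness.
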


\begin{proof}
        We will show the equation (\ref{norm}). Denote by $u$ the inparameter of the tiling $\mathcal{T}$ and by $U$ the circumparameter. Then, since the patch of tiles generated by $\mathcal{B}_{r}(P)$ contains (by definition) $\mathcal{B}_{r}(P),$ and since the area of each tile is at most $U^2\pi$, we have that $t(r,P) \geqslant \frac{r^2\pi}{U^2\pi} = \left (\frac{r}{U}\right )^2,$ for arbitrary $r > 0.$ Now we make use of Lemma \ref{lem}. Tako some $\delta,$ and take $r_{\delta}$ to be the smallest radius such that each union of the first $n_{\delta}$ coronas around any of $\mathcal{P}_1$ or $\mathcal{P}_2$ is contained in $\mathcal{B}_{r_{\delta}}(P_0),$ where $P_0$ is the centroid of the zeroth corona (which is just a single tile $\mathcal{P}_1$ or $\mathcal{P}_2$), again not accounting in the bumps and nicks. Lemma \ref{lem} then tells us that $\frac{t_1(r,P)}{t_2(r,P)} \geqslant q-\delta,$ for any $r > r_{\delta}.$ Since $t_1(r,P) + t_2(r,P) = t(r,P),$ we have that $t_1(r,P) \geqslant \frac{q-\delta}{q-\delta+1}t(r,P) \geqslant \frac{q-\delta}{q-\delta+1}\left (\frac{r}{U}\right )^2.$ On the other hand, $t_1(r+x,P)-t_1(r,P) \leqslant \frac{(r+x+2U)^2\pi - r^2\pi}{u^2\pi} = \frac{\pi(x+2U)(2r + x + 2U)}{u^2\pi},$ since we are counting the number of tiles in the annulus between two discs. The last two inequalities together give $$\frac{t_1(r+x,P)-t_1(r,P)}{t_1(r,P)} \leqslant \frac{(x+2U)(2r + x + 2U)U^2}{\frac{q-\delta}{q-\delta+1}u^2r^2},$$ which tends to zero as $r \rightarrow \infty$ ($\frac{q-\delta}{q-\delta+1}$ is a constant).
\end{proof}

% Note that in order to prove the analogous statement for $t_2(r,P)$ we would need to have an upper bound on the ratio $q$ in each tiling. And indeed, assuming that we consider only protosets in which no tile tiles the plane (and it is not obvious that for our goal we shall not try those as well), the argument to show that boundedness could go as follows: take a very large $K,$ and consider some radius $r_K$ such that in a patch from tiling $\mathcal{T}_K$ we have the ratio of tiles in some $\delta$-neighbourhood of $K.$ Circumscribe each tile $\mathcal{P}_2$ by a circle of radius $U$. Next, we consider the densest packing of circles of radius $U$ into a large circle of radius $r$ (such that the percentage of their area to the area of the large circle is adequate), and consider a circle that does not intersect any of these smaller circles. A patch generated by it will necessarily contain only tiles congruent to $\mathcal{P}_1$, and taking ever larger ratio $K$, in this way we obtain larger and larger patches made only of tiles congruent to $\mathcal{P}_1,$ and as a consequence of the Extention theorem, we obtain that $\mathcal{P}_1$ tiles the plane, which is a contradiction. In the other case, indeed no such bound need exist, and the analogue of equation (\ref{norm}) need not hold.

We are finally ready to prove that the ratio is equal in any point in the tiling.

\begin{lema}\label{welldef}
    If in any tiling $\mathcal{T}$ by two prototiles $\mathcal{P}_1$ and $\mathcal{P}_2$ $\lim_{r \rightarrow\infty} \frac{t_1(r,P)}{t_2(r,P)} = q$, for some point $P$ in the plane, then for any other point $Q$ we have $\lim_{r \rightarrow\infty} \frac{t_1(r,Q)}{t_2(r,Q)} = q.$
\end{lema}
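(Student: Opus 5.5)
Let $d = |PQ|$. The plan is a sandwich argument based on the nesting of disks, $\mathcal{B}_{r-d}(P) \subseteq \mathcal{B}_r(Q) \subseteq \mathcal{B}_{r+d}(P)$. Since every tile meeting a smaller disk also meets the larger one, the patches are nested as well. This gives, for $i=1,2$,
\[
t_i(r-d,P) \leqslant t_i(r,Q) \leqslant t_i(r+d,P).
\]
Consequently
\[
\frac{t_1(r-d,P)}{t_2(r+d,P)} \leqslant \frac{t_1(r,Q)}{t_2(r,Q)} \leqslant \frac{t_1(r+d,P)}{t_2(r-d,P)},
\]
and it suffices to show that both outer quotients tend to $q$ as $r\to\infty$.

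Write the right-hand quotient as
\[
\frac{t_1(r+d,P)}{t_1(r-d,P)}\cdot\frac{t_1(r-d,P)}{t_2(r-d,P)}.
\]
The second factor tends to $q$ by the hypothesis at $P$. The first factor tends to $1$ by Lemma \ref{lem:norm}, applied with $x=2d$ and with $r$ replaced by $r-d$. The left-hand quotient is
\[
\frac{t_1(r-d,P)}{t_2(r-d,P)}\cdot\frac{t_2(r-d,P)}{t_2(r+d,P)},
\]
so here we need the analogue of Lemma \ref{lem:norm} for $t_2$, namely $t_2(r+2d,P)/t_2(r,P)\to 1$.

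That analogue is the main point to check, since the paper's Lemma \ref{lem:norm} is stated only for $t_1$. It follows directly from the hypothesis that the ratio at $P$ tends to a finite $q$; no further appeal to Lemma \ref{lem} is needed. For $r$ large we have $t_1(r,P)\leqslant (q+1)\,t_2(r,P)$, so
\[
t_2(r,P) \geqslant \frac{t(r,P)}{q+2} \geqslant \frac{1}{q+2}\left(\frac{r}{U}\right)^2 .
\]
Meanwhile the same annulus count as in the proof of Lemma \ref{lem:norm} gives $t_2(r+x,P)-t_2(r,P) = O(r)$. Hence $\bigl(t_2(r+x,P)-t_2(r,P)\bigr)/t_2(r,P)\to 0$.

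The identical reasoning reproves the $t_1$ version using only $q>0$, which is implicit in the ratio at $P$ being $q$. If $q=0$, only the $t_1$ estimate is unavailable. In that case the lower sandwich bound is trivially $\geqslant 0$, and the upper bound still tends to $0$: the quotient $t_1(r+d,P)/t_2(r-d,P)$ equals
\[
\frac{t_1(r+d,P)}{t_2(r+d,P)}\cdot\frac{t_2(r+d,P)}{t_2(r-d,P)},
\]
which is a factor tending to $0$ times a factor tending to $1$. The case $q=\infty$ is handled symmetrically if it is allowed.

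Once these ratio limits are in hand, the squeeze theorem gives $\lim_{r\to\infty} t_1(r,Q)/t_2(r,Q)=q$.
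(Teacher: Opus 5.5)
Your proof is correct. Its skeleton is the same as the paper's: nested disks about $P$ and $Q$, a two-sided sandwich for the ratio at $Q$, and a normality estimate that absorbs the shift in radius.

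The differences lie in how the normality input is obtained.

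For the lower bound, the paper factors through $t_1$, writing $\frac{t_1(r,P)}{t_2(r+x,P)}=\frac{t_1(r+x,P)}{t_2(r+x,P)}\cdot\frac{t_1(r,P)}{t_1(r+x,P)}$. That way only the $t_1$ statement of Lemma~\ref{lem:norm} is ever used. You factor through $t_2$ instead, so you need a $t_2$ analogue. For $q>0$ that was avoidable. However, having the $t_2$ analogue is exactly what lets you cover the degenerate case $q=0$, which the paper's $t_1$-based factorisation cannot handle.

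More substantively, you derive the quadratic lower bound $t_i(r,P)\geqslant c\,(r/U)^2$ directly from the hypothesis that $t_1(r,P)/t_2(r,P)$ converges to a finite $q>0$. The paper's proof of Lemma~\ref{lem:norm} instead obtains the lower bound on $t_1$ from Lemma~\ref{lem}. That lemma presupposes that the whole protoset has ratio at least $q$. It is also formulated for corona patches, not for the disk patches counted by $t_i(r,P)$. Your route needs none of this, so it is more elementary. It is also valid under the actual hypotheses of Lemma~\ref{welldef}, which concern a single arbitrary tiling. Since convergence at $P$ is part of the hypothesis anyway, nothing is lost by bypassing Lemma~\ref{lem}.
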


\begin{proof}
    We follow a similar line of reasoning as that in \cite[Theorem~3.3.2]{gs1987}. Let $P$ and $Q$ be arbitrary, and denote by $\frac{x}{2}$ the distance between them. Then obviously $\mathcal{B}_{r}(P) \subset \mathcal{B}_{r+\frac{x}{2}}(Q) \subset \mathcal{B}_{r+x}(P),$ and therefore the analogous relations hold for the patches generated by them. It then follows that $t_1(r,P) \leqslant t_1(r+\frac{x}{2},Q) \leqslant t_1(r+x,P)$ and $t_2(r,P) \leqslant t_2(r+\frac{x}{2},Q) \leqslant t_2(r+x,P)$. The following inequalities are then easily checked to be true:

    \begin{equation}
        \frac{t_1(r+x,P)}{t_2(r+x,P)}\cdot \frac{t_1(r,P)}{t_1(r+x,P)} \leqslant \frac{t_1(r+\frac{x}{2},Q)}{t_2(r+\frac{x}{2},Q)} \leqslant\frac{t_1(r+x,P)}{t_2(r,P)}\cdot \frac{t_1(r,P)}{t_1(r,P)}.
    \end{equation}

    Taking the limit of both sides as $r\rightarrow \infty$, and using Lemma \ref{lem:norm}, we get that both the leftmost and the rightmost side is equal to $q$, proving the claim. 
\end{proof}

\section{Application to $\mathbb{E}^2$}

In this section we briefly show how the approach from the previous section can be utilized in the two-dimensional case.

Applying the density method to the two-dimensional tile results in one-dimensional tiles (which are often considered as unions of intervals) that play the roles of $\mathcal{P}_1$ and $\mathcal{P}_2.$ Indeed, any intersection that we get has to lie on a line. One of the obstacles in this case is that the intersection of a line with a tile is often just a single interval, which tiles the line on its own. Making this intersection disjoint calls for parts of the tile which would serve as "connectors" between different parts from the intersection, which makes the task of surrounding the tile substantially more complicated. We therefore restrict our attention to disconnected tiles in $\mathbb{E}^2$. This variation of the problem has already been solved by Ba\v si\' c, D\v zuklevski and Slivkov\' a, \cite{basicdzuksliv2023}, but we discuss how the density method could be potentially applied to obtain disconnected tiles with arbitrarily large Heesch numbers. One could then hope to somehow easily connect these pieces along the third dimension.

The protoset described below has ratio $2k+1$, as we show in the following Proposition, showing that finding protosets with prescribed ratio is much simpler in one dimension.

However, our intuition is that the bigger the disbalance of the protoset, the smaller the upper bound on its Heesch number is. This claim, while in need of a proof, is intuitively justified by observing that in highly disbalanced protosets, the disbalance of the patches in the other direction is of the same order. And the bigger the disbalance is, the bigger the difference is between the eventually required ratio and the one observed in a patch. Our intuition is therefore that this discrepancy will result in the impossibility of formation of larger coronas sooner, the bigger the disbalance. Following this, we should search for protosets that have ratio $q \approx 1$, but we postpone the study of this direction for the next version of this paper.

We just give a description of the protoset with ratio $2k+1$ but postpone the proof (which is quite straightforward once you draw the tiles) of this fact for the next version. Tiles will be described in terms of summands rather than precise numbers to potentially make the drawing of it easier for the brave reader. In what follows, $\alpha$ is an irrational number smaller than one. 

Let $\mathcal{P}_1 = [0, 0.5] \cup [3, 3.5] \cup [3.5 + (2\alpha +2)\cdot k + \alpha -1, 3.5 + (2\alpha +2)\cdot k + \alpha -1 + 0.5] \cup [3.5 + (2\alpha +2)\cdot k + \alpha -1 + 1, 3.5 + (2\alpha +2)\cdot k + \alpha -1 + 0.5 + 1] \cup [3.5 + (2\alpha +2)\cdot k + \alpha -1 + 0.5 + 1 + 0.5, 3.5 + (2\alpha +2)\cdot k + \alpha -1 + 0.5 + 1 + 0.5 + 2]$ and $\mathcal{P}_2 = [0,0.5] \cup [2, 2+\alpha -1] \cup [2 + \alpha -1 + 0.5, 2 + \alpha -1 + 0.5 + 0.5] \cup [2 + \alpha -1 + 0.5 + 0.5 + 0.5, 2 + \alpha -1 + 0.5 + 0.5 + 0.5 + 0.5]$.

\begin{prop}
 The protoset $\mathscr{P} = \left \{\mathcal{P}_1, \mathcal{P}_2 \right \},$ has ratio $2k+1.$
\end{prop}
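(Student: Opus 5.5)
The plan is to prove the Proposition by a local forcing argument. It has three parts: classify how each gap of $\mathcal{P}_1$ and $\mathcal{P}_2$ can be filled, show that every tiling of the line splits into translates (possibly reflected) of one finite block, and then count tiles in that block. Since a patch generated by an interval of radius $r$ is a union of $\Theta(r)$ complete blocks plus $O(1)$ boundary tiles, the ratio $t_1(r,P)/t_2(r,P)$ will differ from the ratio inside one block by $O(1/r)$. So it suffices to show that each block contains exactly $2k+1$ copies of $\mathcal{P}_1$ and one copy of $\mathcal{P}_2$, up to an allowed multiple of this composition. By Lemma \ref{welldef} it is enough to do this for a single point $P$, although here the uniform structure gives it for all $P$ directly.

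\emph{Step 1 (arithmetic bookkeeping).} Fix a tile and translate so that its left endpoint is $0$. Every endpoint of every interval component of either prototile, and of its reflection, lies in $\tfrac12\mathbb{Z}+\mathbb{Z}\alpha$. Because the tiles fit together with no gaps, an induction along the line shows that every endpoint in the whole tiling lies in $\tfrac12\mathbb{Z}+\mathbb{Z}\alpha$. Since $\alpha$ is irrational, every such point $x$ has a well-defined $\alpha$-coefficient $c(x)\in\mathbb{Z}$. Two lengths with different $\alpha$-coefficients can never be equal. This is the tool that rules out most local configurations: a gap whose length has $\alpha$-coefficient $m$ must be filled by components and gaps whose $\alpha$-coefficients add up to $m$.

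\emph{Step 2 (local classification).} I would list the gaps of both prototiles. For $\mathcal{P}_1$ these are $2.5$, then the long gap $(2\alpha+2)k+\alpha-1$, then $0.5$ and $0.5$; for $\mathcal{P}_2$ they are the short gaps. For each gap I would determine every way to fill it exactly with components of other tiles. The short gaps of length $0.5$ can only be filled by a single length-$0.5$ component, and these components come from a small list. This forces interleavings of a few tiles, where the components of one tile sit in the $0.5$-gaps of another. The long gap of $\mathcal{P}_1$ has $\alpha$-coefficient $2k+1$. Only $\mathcal{P}_2$ and the gaps between copies carry a nonzero $\alpha$-coefficient, and each copy of $\mathcal{P}_2$, or each $\alpha$-carrying gap, contributes a fixed amount to it. Matching coefficients should force exactly $2k+1$ interleaved $\mathcal{P}_1$-units inside each long gap for every $\mathcal{P}_2$, giving a chain $\mathcal{P}_1^{\,2k+1}\mathcal{P}_2$ that I take as the block.

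\emph{Step 3 (global structure and counting).} From the local rigidity in Step 2 I would show by induction to the right and to the left that the tiling is a concatenation of blocks. The only freedom is an orientation choice or a shift at a block junction, and neither changes the block's composition. The ratio then follows as described in the first paragraph.

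The main obstacle is Step 2: it is a finite but delicate case analysis. Some of the stated component lengths look degenerate; for example $\alpha-1<0$ for $\alpha<1$ in $\mathcal{P}_2$. So I would first fix the intended lengths, presumably $\alpha$ or $1-\alpha$, from the picture. After that, the main tool will be $\alpha$-coefficient bookkeeping, using irrationality to exclude cross-matchings.
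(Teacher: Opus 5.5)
The paper does not prove this proposition; the proof is explicitly postponed. Judged on its own, your proposal has a genuine gap at Step~2, and the conclusion you expect there is the opposite of what your own bookkeeping gives.

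In a tiling every point is covered, so there are no ``$\alpha$-carrying gaps between copies''. Gaps are filled by components of other tiles, and the only component with nonzero $\alpha$-coefficient is the second component of $\mathcal{P}_2$. Read its length as $\alpha-1$ with $\alpha>1$; otherwise the printed interval $[2,1+\alpha]$ is empty. Coefficient matching on the long gap of $\mathcal{P}_1$, of length $(2k+1)\alpha+2k-1$, then forces exactly $2k+1$ such components of $\mathcal{P}_2$ inside it, with rational length exactly $4k$ left over. This counts copies of $\mathcal{P}_2$, not of $\mathcal{P}_1$, so ``$2k+1$ interleaved $\mathcal{P}_1$-units per $\mathcal{P}_2$'' does not follow. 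With your alternative reading $1-\alpha$, the coefficient is $-1$, no component has positive coefficient, and the long gap cannot be filled at all.

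Pushing the bookkeeping further refutes the block. On each side of its $\alpha$-component, a copy of $\mathcal{P}_2$ occupies a coefficient-zero span of length $2$: the component $[0,0.5]$ and the $1.5$-gap on one side, and two $0.5$-gaps and two $0.5$-components on the other. Such spans contain no $\alpha$-component, so consecutive $\alpha$-components are at least $2$ apart. Hence the $2k$ stretches between consecutive $\alpha$-components inside a long gap have length exactly $2$, and the long gap begins and ends with an $\alpha$-component.

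A stretch of length $2$ contains no piece of $\mathcal{P}_1$. The two components of $\mathcal{P}_1$ before its long gap lie in one stretch and span $3.5$, and the three after it span $4$. So the long gap of every $\mathcal{P}_1$ contains no $\mathcal{P}_1$-material at all.

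Such a stretch also contains only material from its two flanking copies of $\mathcal{P}_2$, so both must turn their two-component side towards it. The right-hand copy then turns its $[0,0.5]$-side towards the next stretch, which receives at most $0.5+1=1.5$ units and cannot have length $2$. For $k\ge 1$ that next stretch also lies inside the long gap, which is a contradiction. The same argument rules out tilings by $\mathcal{P}_2$ alone, since there every stretch receives at most $2$ units and so must again be of this type.

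Unless I have misread the intervals, the printed protoset admits no tiling for $k\ge 1$. The proposition then holds only vacuously, and Step~3 has no block structure to count. A genuine proof needs corrected tiles, an explicit tiling of ratio $2k+1$, and the forcing analysis you have only sketched; none of these is in the proposal.
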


We also wonder if protosets with arbitrary ratio $q$ can be constructed, but again postpone this for future versions.

\section{Computational results}

In this section we will showcase some of the computational results done by searching the space of all square--based notched prisms up to certain height.

All the tiles were found using a SAT solver which tried to satisfy some set of conditions which exactly explained the behaviour of having Heesch number $k.$ Since searching through the whole space of tiles of a certain height would be too computationally demanding, we focused on specific families of tiles which were easy to describe to the computer. Also, while it is expected that hexagonal--based prisms would have bigger Heesch numbers at the same height (as is the case in the height 1 case), running code on them would be too demanding as well, so we restricted on square--based tiles as a proof of concept.

\subsection{Positive results}

We first start by listing positive results and stating under which restrictions they were found. Apart from the figure, a prism of height $h$ will be described as an ordered $h$-tuple of quadruples whose elements are in $\left \{  -1,0,1\right \},$ which we call its \textit{canonical description}. The way encoding works is that we fix a particular side of the prism (one of four) and list (from the bottom-most level up) the indentations on the sides of each level in the counterclockwise order -- $0$ representing a straight face, and $-1$ and $1$ representing faces with nicks and bumps, respectively, c.f. Figure \ref{fig:p1}. To make the notation less cumbersome, we will abbreviate several consecutive layers that are marked in the same way by putting the description of that layer to the adequate power. We also abbreviate by $\mathbf{H = k}$ the fact that a particular prism has Heesch number $k,$ and by $\mathbf{h = l}$ we denote that it has height $l.$ Since all the prisms are imbalanced, the absolute value of the difference in the number of their bumps and nicks will be denoted by $d.$ Each prism will be given, together with its canonical representation, with a figure showing it. The left part of each figure shows two sides of the prism visible from a fixed point of view, while the second shows the other two sides from the same viewpoint -- in those figures the first two sides are made invisible, so the reader can easily imagine putting one figure (left) on top of the other (right).

All the positive examples have Heesch number $2,$ and while this doesn't seem like much, we observe that the number of tiles in 2 full coronas around our prisms is around $90$, which is comparable to the number of tiles present in all $6$ coronas around the Heesch number $6$ example from Figure \ref{hes6} -- around 170.

We also show how these tiles can be generalized to give 6 infinite classes of tiles with finite Heesch numbers that are at least 2. We begin by showing the positive results.

\paragraph*{$\mathcal{P}_1$: H=2, h=4.}
Tile description:
\[
((-1,-1,1,1), (-1,-1,-1,1), (1,-1,-1,1)^2).
\]

This tile has $d=2.$

\paragraph*{$\mathcal{P}_2$:H=2, h=4.}
Tile description:
\[
((-1,-1,0,1), (0,-1,-1,1), (1,-1,-1,1)^2).
\]

This tile was found while searching for a prism with at least one flat side.

\begin{figure}[!htb]
  \centering
  \begin{minipage}[t]{.48\linewidth}
    \centering
  \begingroup
% Input-ready figure for p1; include inside a figure environment.
\definecolor{pigment}{HTML}{53677A}
\colorlet{bodyLeft}{pigment!52}
\colorlet{bodyRight}{pigment!36}
\colorlet{bodyTop}{pigment!44}
\colorlet{bumpA}{pigment!72}
\colorlet{bumpB}{pigment!62}
\colorlet{bumpC}{pigment!45}
\colorlet{bumpD}{pigment!68}
\colorlet{nickA}{pigment!27}
\colorlet{nickB}{pigment!39}
\colorlet{nickC}{pigment!17}
\colorlet{nickD}{pigment!25}
\colorlet{floorFar}{pigment!65}
\colorlet{floorNear}{pigment!28}
\tikzset{edge/.style={draw=black!80,line width=.45pt,line join=round},
  hidden/.style={draw=black!43,line width=.30pt,
    dash pattern=on 1.7pt off 1.5pt,line join=round},
  ghost/.style={draw=black!42,line width=.32pt,
    dash pattern=on .7pt off 1.45pt,line join=round}}
% Square-pyramidal base .30 by .30, apex depth .09.
% Feature on x=0, viewed from negative x and negative y.
\newcommand{\Xfeature}[2]{%
  \pgfmathsetmacro{\lo}{#1+.35}
  \pgfmathsetmacro{\mid}{#1+.5}
  \pgfmathsetmacro{\hi}{#1+.65}
  \pgfmathsetmacro{\dep}{-.09*(#2)}
  \ifnum#2=1
    \path[edge,fill=bumpA] (0,.35,\lo)--(0,.65,\lo)--(\dep,.5,\mid)--cycle;
    \path[edge,fill=bumpB] (0,.65,\lo)--(0,.65,\hi)--(\dep,.5,\mid)--cycle;
    \path[edge,fill=bumpC] (0,.65,\hi)--(0,.35,\hi)--(\dep,.5,\mid)--cycle;
    \path[edge,fill=bumpD] (0,.35,\hi)--(0,.35,\lo)--(\dep,.5,\mid)--cycle;
  \else
    \pgfmathsetmacro{\lo}{#1+.35}
    \pgfmathsetmacro{\hi}{#1+.65}
    \def\dep{.09}
    \begin{scope}
      \clip (0,.35,\lo)--(0,.65,\lo)--(0,.65,\hi)--(0,.35,\hi)--cycle;
      \path[fill=nickA] (0,.35,\lo)--(0,.65,\lo)--(\dep,.5,\mid)--cycle;
      \path[fill=nickB] (0,.65,\lo)--(0,.65,\hi)--(\dep,.5,\mid)--cycle;
      \path[fill=nickC] (0,.65,\hi)--(0,.35,\hi)--(\dep,.5,\mid)--cycle;
      \path[fill=nickD] (0,.35,\hi)--(0,.35,\lo)--(\dep,.5,\mid)--cycle;
      \draw[black!65,line width=.3pt] (\dep,.5,\mid)--(0,.35,\lo)
        (\dep,.5,\mid)--(0,.65,\lo)
        (\dep,.5,\mid)--(0,.65,\hi)
        (\dep,.5,\mid)--(0,.35,\hi);
    \end{scope}
    \draw[edge] (0,.35,\lo)--(0,.65,\lo)--(0,.65,\hi)--(0,.35,\hi)--cycle;
  \fi
}
% Feature on y=0.
\newcommand{\Yfeature}[2]{%
  \pgfmathsetmacro{\lo}{#1+.35}
  \pgfmathsetmacro{\mid}{#1+.5}
  \pgfmathsetmacro{\hi}{#1+.65}
  \pgfmathsetmacro{\dep}{-.09*(#2)}
  \ifnum#2=1
    \path[edge,fill=bumpA] (.35,0,\lo)--(.65,0,\lo)--(.5,\dep,\mid)--cycle;
    \path[edge,fill=bumpB] (.65,0,\lo)--(.65,0,\hi)--(.5,\dep,\mid)--cycle;
    \path[edge,fill=bumpC] (.65,0,\hi)--(.35,0,\hi)--(.5,\dep,\mid)--cycle;
    \path[edge,fill=bumpD] (.35,0,\hi)--(.35,0,\lo)--(.5,\dep,\mid)--cycle;
  \else
    \pgfmathsetmacro{\lo}{#1+.35}
    \pgfmathsetmacro{\hi}{#1+.65}
    \def\dep{.09}
    \begin{scope}
      \clip (.35,0,\lo)--(.65,0,\lo)--(.65,0,\hi)--(.35,0,\hi)--cycle;
      \path[fill=nickA] (.35,0,\lo)--(.65,0,\lo)--(.5,\dep,\mid)--cycle;
      \path[fill=nickB] (.65,0,\lo)--(.65,0,\hi)--(.5,\dep,\mid)--cycle;
      \path[fill=nickC] (.65,0,\hi)--(.35,0,\hi)--(.5,\dep,\mid)--cycle;
      \path[fill=nickD] (.35,0,\hi)--(.35,0,\lo)--(.5,\dep,\mid)--cycle;
      \draw[black!65,line width=.3pt] (.5,\dep,\mid)--(.35,0,\lo)
        (.5,\dep,\mid)--(.65,0,\lo)
        (.5,\dep,\mid)--(.65,0,\hi)
        (.5,\dep,\mid)--(.35,0,\hi);
    \end{scope}
    \draw[edge] (.35,0,\lo)--(.65,0,\lo)--(.65,0,\hi)--(.35,0,\hi)--cycle;
  \fi
}

\newcommand{\prismbody}{%
  \path[edge,fill=bodyLeft] (0,0,0)--(0,1,0)--(0,1,4)--(0,0,4)--cycle;
  \path[edge,fill=bodyRight] (0,0,0)--(1,0,0)--(1,0,4)--(0,0,4)--cycle;
  \path[edge,fill=bodyTop] (0,0,4)--(1,0,4)--(1,1,4)--(0,1,4)--cycle;
  \foreach \level in {1,2,3}{%
    \draw[hidden] (0,1,\level)--(0,0,\level)--(1,0,\level);
  }
}

\resizebox{.715\linewidth}{!}{%
\begin{tikzpicture}[font=\small]
  \begin{scope}[x={(1.08253cm,.625cm)},y={(-1.08253cm,.625cm)},z={(0cm,1.25cm)}]
    \prismbody
    \foreach \j/\sgn in {0/1,1/1,2/1,3/1}{\Xfeature{\j}{\sgn}}
    \foreach \j/\sgn in {0/-1,1/-1,2/1,3/1}{\Yfeature{\j}{\sgn}}
  \end{scope}
  \begin{scope}[shift={(4.2cm,0)},
      x={(1.08253cm,.625cm)},y={(-1.08253cm,.625cm)},z={(0cm,1.25cm)}]
    % The bottom face is the floor of the open shell.
    \shade[draw=black!70,line width=.45pt,top color=floorFar,bottom color=floorNear] (0,0,0)--(1,0,0)--(1,1,0)--(0,1,0)--cycle;
    % Actual rear faces: x=1 and y=1.
    \path[edge,fill=bodyRight] (1,0,0)--(1,1,0)--(1,1,4)--(1,0,4)--cycle;
    \path[edge,fill=bodyLeft] (0,1,0)--(1,1,0)--(1,1,4)--(0,1,4)--cycle;
    \path[fill=bodyTop,opacity=.3]
      (0,0,4)--(1,0,4)--(1,1,4)--(0,1,4)--cycle;
    \foreach \level in {1,2,3}{%
      \draw[hidden] (1,0,\level)--(1,1,\level)--(0,1,\level);
    }
    % Mirrored coordinates draw features on the physical rear faces.
    \begin{scope}[shift={(1,0,0)},x={(-1.08253cm,-.625cm)}]
      \foreach \j/\sgn in {0/-1,1/-1,2/-1,3/-1}{\Xfeature{\j}{\sgn}}
    \end{scope}
    \begin{scope}[shift={(0,1,0)},y={(1.08253cm,-.625cm)}]
      \foreach \j/\sgn in {0/1,1/-1,2/-1,3/-1}{\Yfeature{\j}{\sgn}}
    \end{scope}
    % The dotted outline records the removed front walls.
    \draw[ghost]
      (0,0,0)--(0,0,4)--(0,1,4)--(0,1,0)--cycle
      (0,0,0)--(1,0,0)--(1,0,4)--(0,0,4)
      (0,1,4)--(1,1,4)--(1,0,4)
      (1,0,0)--(1,1,0)--(0,1,0)
      (1,1,0)--(1,1,4);

  \end{scope}
  \node at (2.1,6.75) {$\mathcal P_1$};
  \node at (0,-.47) {(a) front};
  \node at (4.2,-.47) {(b) rear faces};
\end{tikzpicture}%
}%
\endgroup
  \caption{Two views of $\mathcal{P}_1$.}
  \label{fig:p1}
  \end{minipage}%
  \hfill
  \begin{minipage}[t]{.48\linewidth}
    \centering
  \begingroup
% Input-ready figure for p2; include inside a figure environment.
\definecolor{pigment}{HTML}{53677A}
\colorlet{bodyLeft}{pigment!52}
\colorlet{bodyRight}{pigment!36}
\colorlet{bodyTop}{pigment!44}
\colorlet{bumpA}{pigment!72}
\colorlet{bumpB}{pigment!62}
\colorlet{bumpC}{pigment!45}
\colorlet{bumpD}{pigment!68}
\colorlet{nickA}{pigment!27}
\colorlet{nickB}{pigment!39}
\colorlet{nickC}{pigment!17}
\colorlet{nickD}{pigment!25}
\colorlet{floorFar}{pigment!65}
\colorlet{floorNear}{pigment!28}
\tikzset{edge/.style={draw=black!80,line width=.45pt,line join=round},
  hidden/.style={draw=black!43,line width=.30pt,
    dash pattern=on 1.7pt off 1.5pt,line join=round},
  ghost/.style={draw=black!42,line width=.32pt,
    dash pattern=on .7pt off 1.45pt,line join=round}}
% Square-pyramidal base .30 by .30, apex depth .09.
% Feature on x=0, viewed from negative x and negative y.
\newcommand{\Xfeature}[2]{%
  \pgfmathsetmacro{\lo}{#1+.35}
  \pgfmathsetmacro{\mid}{#1+.5}
  \pgfmathsetmacro{\hi}{#1+.65}
  \pgfmathsetmacro{\dep}{-.09*(#2)}
  \ifnum#2=1
    \path[edge,fill=bumpA] (0,.35,\lo)--(0,.65,\lo)--(\dep,.5,\mid)--cycle;
    \path[edge,fill=bumpB] (0,.65,\lo)--(0,.65,\hi)--(\dep,.5,\mid)--cycle;
    \path[edge,fill=bumpC] (0,.65,\hi)--(0,.35,\hi)--(\dep,.5,\mid)--cycle;
    \path[edge,fill=bumpD] (0,.35,\hi)--(0,.35,\lo)--(\dep,.5,\mid)--cycle;
  \else
    \pgfmathsetmacro{\lo}{#1+.35}
    \pgfmathsetmacro{\hi}{#1+.65}
    \def\dep{.09}
    \begin{scope}
      \clip (0,.35,\lo)--(0,.65,\lo)--(0,.65,\hi)--(0,.35,\hi)--cycle;
      \path[fill=nickA] (0,.35,\lo)--(0,.65,\lo)--(\dep,.5,\mid)--cycle;
      \path[fill=nickB] (0,.65,\lo)--(0,.65,\hi)--(\dep,.5,\mid)--cycle;
      \path[fill=nickC] (0,.65,\hi)--(0,.35,\hi)--(\dep,.5,\mid)--cycle;
      \path[fill=nickD] (0,.35,\hi)--(0,.35,\lo)--(\dep,.5,\mid)--cycle;
      \draw[black!65,line width=.3pt] (\dep,.5,\mid)--(0,.35,\lo)
        (\dep,.5,\mid)--(0,.65,\lo)
        (\dep,.5,\mid)--(0,.65,\hi)
        (\dep,.5,\mid)--(0,.35,\hi);
    \end{scope}
    \draw[edge] (0,.35,\lo)--(0,.65,\lo)--(0,.65,\hi)--(0,.35,\hi)--cycle;
  \fi
}
% Feature on y=0.
\newcommand{\Yfeature}[2]{%
  \pgfmathsetmacro{\lo}{#1+.35}
  \pgfmathsetmacro{\mid}{#1+.5}
  \pgfmathsetmacro{\hi}{#1+.65}
  \pgfmathsetmacro{\dep}{-.09*(#2)}
  \ifnum#2=1
    \path[edge,fill=bumpA] (.35,0,\lo)--(.65,0,\lo)--(.5,\dep,\mid)--cycle;
    \path[edge,fill=bumpB] (.65,0,\lo)--(.65,0,\hi)--(.5,\dep,\mid)--cycle;
    \path[edge,fill=bumpC] (.65,0,\hi)--(.35,0,\hi)--(.5,\dep,\mid)--cycle;
    \path[edge,fill=bumpD] (.35,0,\hi)--(.35,0,\lo)--(.5,\dep,\mid)--cycle;
  \else
    \pgfmathsetmacro{\lo}{#1+.35}
    \pgfmathsetmacro{\hi}{#1+.65}
    \def\dep{.09}
    \begin{scope}
      \clip (.35,0,\lo)--(.65,0,\lo)--(.65,0,\hi)--(.35,0,\hi)--cycle;
      \path[fill=nickA] (.35,0,\lo)--(.65,0,\lo)--(.5,\dep,\mid)--cycle;
      \path[fill=nickB] (.65,0,\lo)--(.65,0,\hi)--(.5,\dep,\mid)--cycle;
      \path[fill=nickC] (.65,0,\hi)--(.35,0,\hi)--(.5,\dep,\mid)--cycle;
      \path[fill=nickD] (.35,0,\hi)--(.35,0,\lo)--(.5,\dep,\mid)--cycle;
      \draw[black!65,line width=.3pt] (.5,\dep,\mid)--(.35,0,\lo)
        (.5,\dep,\mid)--(.65,0,\lo)
        (.5,\dep,\mid)--(.65,0,\hi)
        (.5,\dep,\mid)--(.35,0,\hi);
    \end{scope}
    \draw[edge] (.35,0,\lo)--(.65,0,\lo)--(.65,0,\hi)--(.35,0,\hi)--cycle;
  \fi
}

\newcommand{\prismbody}{%
  \path[edge,fill=bodyLeft] (0,0,0)--(0,1,0)--(0,1,4)--(0,0,4)--cycle;
  \path[edge,fill=bodyRight] (0,0,0)--(1,0,0)--(1,0,4)--(0,0,4)--cycle;
  \path[edge,fill=bodyTop] (0,0,4)--(1,0,4)--(1,1,4)--(0,1,4)--cycle;
  \foreach \level in {1,2,3}{%
    \draw[hidden] (0,1,\level)--(0,0,\level)--(1,0,\level);
  }
}

\resizebox{.715\linewidth}{!}{%
\begin{tikzpicture}[font=\small]
  \begin{scope}[x={(1.08253cm,.625cm)},y={(-1.08253cm,.625cm)},z={(0cm,1.25cm)}]
    \prismbody
    \foreach \j/\sgn in {0/1,1/1,2/1,3/1}{\Xfeature{\j}{\sgn}}
    \foreach \j/\sgn in {0/-1,2/1,3/1}{\Yfeature{\j}{\sgn}}
  \end{scope}
  \begin{scope}[shift={(4.2cm,0)},
      x={(1.08253cm,.625cm)},y={(-1.08253cm,.625cm)},z={(0cm,1.25cm)}]
    % The bottom face is the floor of the open shell.
    \shade[draw=black!70,line width=.45pt,top color=floorFar,bottom color=floorNear] (0,0,0)--(1,0,0)--(1,1,0)--(0,1,0)--cycle;
    % Actual rear faces: x=1 and y=1.
    \path[edge,fill=bodyRight] (1,0,0)--(1,1,0)--(1,1,4)--(1,0,4)--cycle;
    \path[edge,fill=bodyLeft] (0,1,0)--(1,1,0)--(1,1,4)--(0,1,4)--cycle;
    \path[fill=bodyTop,opacity=.3]
      (0,0,4)--(1,0,4)--(1,1,4)--(0,1,4)--cycle;
    \foreach \level in {1,2,3}{%
      \draw[hidden] (1,0,\level)--(1,1,\level)--(0,1,\level);
    }
    % Mirrored coordinates draw features on the physical rear faces.
    \begin{scope}[shift={(1,0,0)},x={(-1.08253cm,-.625cm)}]
      \foreach \j/\sgn in {0/-1,1/-1,2/-1,3/-1}{\Xfeature{\j}{\sgn}}
    \end{scope}
    \begin{scope}[shift={(0,1,0)},y={(1.08253cm,-.625cm)}]
      \foreach \j/\sgn in {1/-1,2/-1,3/-1}{\Yfeature{\j}{\sgn}}
    \end{scope}
    % The dotted outline records the removed front walls.
    \draw[ghost]
      (0,0,0)--(0,0,4)--(0,1,4)--(0,1,0)--cycle
      (0,0,0)--(1,0,0)--(1,0,4)--(0,0,4)
      (0,1,4)--(1,1,4)--(1,0,4)
      (1,0,0)--(1,1,0)--(0,1,0)
      (1,1,0)--(1,1,4);

  \end{scope}
  \node at (2.1,6.75) {$\mathcal P_2$};
  \node at (0,-.47) {(a) front};
  \node at (4.2,-.47) {(b) rear faces};
\end{tikzpicture}%
}%
\endgroup
  \caption{Two views of $\mathcal{P}_2$.}
  \label{fig:p2}
  \end{minipage}
\end{figure}

\paragraph*{$\mathcal{P}_3$: H=2, h=8.}
Tile description:
\[
((-1,-1,1,1), (1,-1,1,1)^2, (1,-1,-1,1)^5).
\]

This tile was found while searching for a prism with no flat faces.

This tile has $d=4.$

\paragraph*{$\mathcal{P}_4$: H=2, h=8.}
Tile description:
\[
\begin{aligned}
&((-1,-1,-1,-1), (-1,0,-1,-1), (-1,1,-1,-1), (0,1,-1,-1),\\
&\qquad (1,1,-1,-1), (1,1,-1,0), (1,1,-1,1), (1,1,0,1)).
\end{aligned}
\]

This example is particularly interesting since it exhibits a "spiral structure" -- the flat faces move in a spiral. Also, each side is "increasing", in the sense that each side consists of some faces with a nick, then a flat face, then some faces with a bump. It is also interesting that in one of the surroundings that achieve Heesch number $2$, prisms can be paired so that each is in a block of two that are stacked on top of each other, where all pairs have same choice of an upper prism being rotated around the lower prism.

This tile has $d=4.$

\begin{figure}[!htb]
  \centering
  \begin{minipage}[t]{.48\linewidth}
    \centering
  \begingroup
% Input-ready figure for p3; include inside a figure environment.
\definecolor{pigment}{HTML}{53677A}
\colorlet{bodyLeft}{pigment!52}
\colorlet{bodyRight}{pigment!36}
\colorlet{bodyTop}{pigment!44}
\colorlet{bumpA}{pigment!72}
\colorlet{bumpB}{pigment!62}
\colorlet{bumpC}{pigment!45}
\colorlet{bumpD}{pigment!68}
\colorlet{nickA}{pigment!27}
\colorlet{nickB}{pigment!39}
\colorlet{nickC}{pigment!17}
\colorlet{nickD}{pigment!25}
\colorlet{floorFar}{pigment!65}
\colorlet{floorNear}{pigment!28}
\tikzset{edge/.style={draw=black!80,line width=.45pt,line join=round},
  hidden/.style={draw=black!43,line width=.30pt,
    dash pattern=on 1.7pt off 1.5pt,line join=round},
  ghost/.style={draw=black!42,line width=.32pt,
    dash pattern=on .7pt off 1.45pt,line join=round}}
% Square-pyramidal base .30 by .30, apex depth .09.
% Feature on x=0, viewed from negative x and negative y.
\newcommand{\Xfeature}[2]{%
  \pgfmathsetmacro{\lo}{#1+.35}
  \pgfmathsetmacro{\mid}{#1+.5}
  \pgfmathsetmacro{\hi}{#1+.65}
  \pgfmathsetmacro{\dep}{-.09*(#2)}
  \ifnum#2=1
    \path[edge,fill=bumpA] (0,.35,\lo)--(0,.65,\lo)--(\dep,.5,\mid)--cycle;
    \path[edge,fill=bumpB] (0,.65,\lo)--(0,.65,\hi)--(\dep,.5,\mid)--cycle;
    \path[edge,fill=bumpC] (0,.65,\hi)--(0,.35,\hi)--(\dep,.5,\mid)--cycle;
    \path[edge,fill=bumpD] (0,.35,\hi)--(0,.35,\lo)--(\dep,.5,\mid)--cycle;
  \else
    \pgfmathsetmacro{\lo}{#1+.35}
    \pgfmathsetmacro{\hi}{#1+.65}
    \def\dep{.09}
    \begin{scope}
      \clip (0,.35,\lo)--(0,.65,\lo)--(0,.65,\hi)--(0,.35,\hi)--cycle;
      \path[fill=nickA] (0,.35,\lo)--(0,.65,\lo)--(\dep,.5,\mid)--cycle;
      \path[fill=nickB] (0,.65,\lo)--(0,.65,\hi)--(\dep,.5,\mid)--cycle;
      \path[fill=nickC] (0,.65,\hi)--(0,.35,\hi)--(\dep,.5,\mid)--cycle;
      \path[fill=nickD] (0,.35,\hi)--(0,.35,\lo)--(\dep,.5,\mid)--cycle;
      \draw[black!65,line width=.3pt] (\dep,.5,\mid)--(0,.35,\lo)
        (\dep,.5,\mid)--(0,.65,\lo)
        (\dep,.5,\mid)--(0,.65,\hi)
        (\dep,.5,\mid)--(0,.35,\hi);
    \end{scope}
    \draw[edge] (0,.35,\lo)--(0,.65,\lo)--(0,.65,\hi)--(0,.35,\hi)--cycle;
  \fi
}
% Feature on y=0.
\newcommand{\Yfeature}[2]{%
  \pgfmathsetmacro{\lo}{#1+.35}
  \pgfmathsetmacro{\mid}{#1+.5}
  \pgfmathsetmacro{\hi}{#1+.65}
  \pgfmathsetmacro{\dep}{-.09*(#2)}
  \ifnum#2=1
    \path[edge,fill=bumpA] (.35,0,\lo)--(.65,0,\lo)--(.5,\dep,\mid)--cycle;
    \path[edge,fill=bumpB] (.65,0,\lo)--(.65,0,\hi)--(.5,\dep,\mid)--cycle;
    \path[edge,fill=bumpC] (.65,0,\hi)--(.35,0,\hi)--(.5,\dep,\mid)--cycle;
    \path[edge,fill=bumpD] (.35,0,\hi)--(.35,0,\lo)--(.5,\dep,\mid)--cycle;
  \else
    \pgfmathsetmacro{\lo}{#1+.35}
    \pgfmathsetmacro{\hi}{#1+.65}
    \def\dep{.09}
    \begin{scope}
      \clip (.35,0,\lo)--(.65,0,\lo)--(.65,0,\hi)--(.35,0,\hi)--cycle;
      \path[fill=nickA] (.35,0,\lo)--(.65,0,\lo)--(.5,\dep,\mid)--cycle;
      \path[fill=nickB] (.65,0,\lo)--(.65,0,\hi)--(.5,\dep,\mid)--cycle;
      \path[fill=nickC] (.65,0,\hi)--(.35,0,\hi)--(.5,\dep,\mid)--cycle;
      \path[fill=nickD] (.35,0,\hi)--(.35,0,\lo)--(.5,\dep,\mid)--cycle;
      \draw[black!65,line width=.3pt] (.5,\dep,\mid)--(.35,0,\lo)
        (.5,\dep,\mid)--(.65,0,\lo)
        (.5,\dep,\mid)--(.65,0,\hi)
        (.5,\dep,\mid)--(.35,0,\hi);
    \end{scope}
    \draw[edge] (.35,0,\lo)--(.65,0,\lo)--(.65,0,\hi)--(.35,0,\hi)--cycle;
  \fi
}

\newcommand{\prismbody}{%
  \path[edge,fill=bodyLeft] (0,0,0)--(0,1,0)--(0,1,8)--(0,0,8)--cycle;
  \path[edge,fill=bodyRight] (0,0,0)--(1,0,0)--(1,0,8)--(0,0,8)--cycle;
  \path[edge,fill=bodyTop] (0,0,8)--(1,0,8)--(1,1,8)--(0,1,8)--cycle;
  \foreach \level in {1,2,3,4,5,6,7}{%
    \draw[hidden] (0,1,\level)--(0,0,\level)--(1,0,\level);
  }
}

\resizebox{.715\linewidth}{!}{%
\begin{tikzpicture}[font=\small]
  \begin{scope}[x={(1.08253cm,.625cm)},y={(-1.08253cm,.625cm)},z={(0cm,1.25cm)}]
    \prismbody
    \foreach \j/\sgn in {0/1,1/1,2/1,3/1,4/1,5/1,6/1,7/1}{\Xfeature{\j}{\sgn}}
    \foreach \j/\sgn in {0/-1,1/1,2/1,3/1,4/1,5/1,6/1,7/1}{\Yfeature{\j}{\sgn}}
  \end{scope}
  \begin{scope}[shift={(4.2cm,0)},
      x={(1.08253cm,.625cm)},y={(-1.08253cm,.625cm)},z={(0cm,1.25cm)}]
    % The bottom face is the floor of the open shell.
    \shade[draw=black!70,line width=.45pt,top color=floorFar,bottom color=floorNear] (0,0,0)--(1,0,0)--(1,1,0)--(0,1,0)--cycle;
    % Actual rear faces: x=1 and y=1.
    \path[edge,fill=bodyRight] (1,0,0)--(1,1,0)--(1,1,8)--(1,0,8)--cycle;
    \path[edge,fill=bodyLeft] (0,1,0)--(1,1,0)--(1,1,8)--(0,1,8)--cycle;
    \path[fill=bodyTop,opacity=.3]
      (0,0,8)--(1,0,8)--(1,1,8)--(0,1,8)--cycle;
    \foreach \level in {1,2,3,4,5,6,7}{%
      \draw[hidden] (1,0,\level)--(1,1,\level)--(0,1,\level);
    }
    % Mirrored coordinates draw features on the physical rear faces.
    \begin{scope}[shift={(1,0,0)},x={(-1.08253cm,-.625cm)}]
      \foreach \j/\sgn in {0/-1,1/-1,2/-1,3/-1,4/-1,5/-1,6/-1,7/-1}{\Xfeature{\j}{\sgn}}
    \end{scope}
    \begin{scope}[shift={(0,1,0)},y={(1.08253cm,-.625cm)}]
      \foreach \j/\sgn in {0/1,1/1,2/1,3/-1,4/-1,5/-1,6/-1,7/-1}{\Yfeature{\j}{\sgn}}
    \end{scope}
    % The dotted outline records the removed front walls.
    \draw[ghost]
      (0,0,0)--(0,0,8)--(0,1,8)--(0,1,0)--cycle
      (0,0,0)--(1,0,0)--(1,0,8)--(0,0,8)
      (0,1,8)--(1,1,8)--(1,0,8)
      (1,0,0)--(1,1,0)--(0,1,0)
      (1,1,0)--(1,1,8);

  \end{scope}
  \node at (2.1,11.75) {$\mathcal P_3$};
  \node at (0,-.47) {(a) front};
  \node at (4.2,-.47) {(b) rear faces};
\end{tikzpicture}%
}%
\endgroup
  \caption{Two views of $\mathcal{P}_3$.}
  \label{fig:p3}
  \end{minipage}%
  \hfill
  \begin{minipage}[t]{.48\linewidth}
    \centering
  \begingroup
% Input-ready figure for p4; include inside a figure environment.
\definecolor{pigment}{HTML}{53677A}
\colorlet{bodyLeft}{pigment!52}
\colorlet{bodyRight}{pigment!36}
\colorlet{bodyTop}{pigment!44}
\colorlet{bumpA}{pigment!72}
\colorlet{bumpB}{pigment!62}
\colorlet{bumpC}{pigment!45}
\colorlet{bumpD}{pigment!68}
\colorlet{nickA}{pigment!27}
\colorlet{nickB}{pigment!39}
\colorlet{nickC}{pigment!17}
\colorlet{nickD}{pigment!25}
\colorlet{floorFar}{pigment!65}
\colorlet{floorNear}{pigment!28}
\tikzset{edge/.style={draw=black!80,line width=.45pt,line join=round},
  hidden/.style={draw=black!43,line width=.30pt,
    dash pattern=on 1.7pt off 1.5pt,line join=round},
  ghost/.style={draw=black!42,line width=.32pt,
    dash pattern=on .7pt off 1.45pt,line join=round}}
% Square-pyramidal base .30 by .30, apex depth .09.
% Feature on x=0, viewed from negative x and negative y.
\newcommand{\Xfeature}[2]{%
  \pgfmathsetmacro{\lo}{#1+.35}
  \pgfmathsetmacro{\mid}{#1+.5}
  \pgfmathsetmacro{\hi}{#1+.65}
  \pgfmathsetmacro{\dep}{-.09*(#2)}
  \ifnum#2=1
    \path[edge,fill=bumpA] (0,.35,\lo)--(0,.65,\lo)--(\dep,.5,\mid)--cycle;
    \path[edge,fill=bumpB] (0,.65,\lo)--(0,.65,\hi)--(\dep,.5,\mid)--cycle;
    \path[edge,fill=bumpC] (0,.65,\hi)--(0,.35,\hi)--(\dep,.5,\mid)--cycle;
    \path[edge,fill=bumpD] (0,.35,\hi)--(0,.35,\lo)--(\dep,.5,\mid)--cycle;
  \else
    \pgfmathsetmacro{\lo}{#1+.35}
    \pgfmathsetmacro{\hi}{#1+.65}
    \def\dep{.09}
    \begin{scope}
      \clip (0,.35,\lo)--(0,.65,\lo)--(0,.65,\hi)--(0,.35,\hi)--cycle;
      \path[fill=nickA] (0,.35,\lo)--(0,.65,\lo)--(\dep,.5,\mid)--cycle;
      \path[fill=nickB] (0,.65,\lo)--(0,.65,\hi)--(\dep,.5,\mid)--cycle;
      \path[fill=nickC] (0,.65,\hi)--(0,.35,\hi)--(\dep,.5,\mid)--cycle;
      \path[fill=nickD] (0,.35,\hi)--(0,.35,\lo)--(\dep,.5,\mid)--cycle;
      \draw[black!65,line width=.3pt] (\dep,.5,\mid)--(0,.35,\lo)
        (\dep,.5,\mid)--(0,.65,\lo)
        (\dep,.5,\mid)--(0,.65,\hi)
        (\dep,.5,\mid)--(0,.35,\hi);
    \end{scope}
    \draw[edge] (0,.35,\lo)--(0,.65,\lo)--(0,.65,\hi)--(0,.35,\hi)--cycle;
  \fi
}
% Feature on y=0.
\newcommand{\Yfeature}[2]{%
  \pgfmathsetmacro{\lo}{#1+.35}
  \pgfmathsetmacro{\mid}{#1+.5}
  \pgfmathsetmacro{\hi}{#1+.65}
  \pgfmathsetmacro{\dep}{-.09*(#2)}
  \ifnum#2=1
    \path[edge,fill=bumpA] (.35,0,\lo)--(.65,0,\lo)--(.5,\dep,\mid)--cycle;
    \path[edge,fill=bumpB] (.65,0,\lo)--(.65,0,\hi)--(.5,\dep,\mid)--cycle;
    \path[edge,fill=bumpC] (.65,0,\hi)--(.35,0,\hi)--(.5,\dep,\mid)--cycle;
    \path[edge,fill=bumpD] (.35,0,\hi)--(.35,0,\lo)--(.5,\dep,\mid)--cycle;
  \else
    \pgfmathsetmacro{\lo}{#1+.35}
    \pgfmathsetmacro{\hi}{#1+.65}
    \def\dep{.09}
    \begin{scope}
      \clip (.35,0,\lo)--(.65,0,\lo)--(.65,0,\hi)--(.35,0,\hi)--cycle;
      \path[fill=nickA] (.35,0,\lo)--(.65,0,\lo)--(.5,\dep,\mid)--cycle;
      \path[fill=nickB] (.65,0,\lo)--(.65,0,\hi)--(.5,\dep,\mid)--cycle;
      \path[fill=nickC] (.65,0,\hi)--(.35,0,\hi)--(.5,\dep,\mid)--cycle;
      \path[fill=nickD] (.35,0,\hi)--(.35,0,\lo)--(.5,\dep,\mid)--cycle;
      \draw[black!65,line width=.3pt] (.5,\dep,\mid)--(.35,0,\lo)
        (.5,\dep,\mid)--(.65,0,\lo)
        (.5,\dep,\mid)--(.65,0,\hi)
        (.5,\dep,\mid)--(.35,0,\hi);
    \end{scope}
    \draw[edge] (.35,0,\lo)--(.65,0,\lo)--(.65,0,\hi)--(.35,0,\hi)--cycle;
  \fi
}

\newcommand{\prismbody}{%
  \path[edge,fill=bodyLeft] (0,0,0)--(0,1,0)--(0,1,8)--(0,0,8)--cycle;
  \path[edge,fill=bodyRight] (0,0,0)--(1,0,0)--(1,0,8)--(0,0,8)--cycle;
  \path[edge,fill=bodyTop] (0,0,8)--(1,0,8)--(1,1,8)--(0,1,8)--cycle;
  \foreach \level in {1,2,3,4,5,6,7}{%
    \draw[hidden] (0,1,\level)--(0,0,\level)--(1,0,\level);
  }
}

\resizebox{.715\linewidth}{!}{%
\begin{tikzpicture}[font=\small]
  \begin{scope}[x={(1.08253cm,.625cm)},y={(-1.08253cm,.625cm)},z={(0cm,1.25cm)}]
    \prismbody
    \foreach \j/\sgn in {0/-1,1/-1,2/-1,3/-1,4/-1,6/1,7/1}{\Xfeature{\j}{\sgn}}
    \foreach \j/\sgn in {0/-1,1/-1,2/-1,4/1,5/1,6/1,7/1}{\Yfeature{\j}{\sgn}}
  \end{scope}
  \begin{scope}[shift={(4.2cm,0)},
      x={(1.08253cm,.625cm)},y={(-1.08253cm,.625cm)},z={(0cm,1.25cm)}]
    % The bottom face is the floor of the open shell.
    \shade[draw=black!70,line width=.45pt,top color=floorFar,bottom color=floorNear] (0,0,0)--(1,0,0)--(1,1,0)--(0,1,0)--cycle;
    % Actual rear faces: x=1 and y=1.
    \path[edge,fill=bodyRight] (1,0,0)--(1,1,0)--(1,1,8)--(1,0,8)--cycle;
    \path[edge,fill=bodyLeft] (0,1,0)--(1,1,0)--(1,1,8)--(0,1,8)--cycle;
    \path[fill=bodyTop,opacity=.3]
      (0,0,8)--(1,0,8)--(1,1,8)--(0,1,8)--cycle;
    \foreach \level in {1,2,3,4,5,6,7}{%
      \draw[hidden] (1,0,\level)--(1,1,\level)--(0,1,\level);
    }
    % Mirrored coordinates draw features on the physical rear faces.
    \begin{scope}[shift={(1,0,0)},x={(-1.08253cm,-.625cm)}]
      \foreach \j/\sgn in {0/-1,2/1,3/1,4/1,5/1,6/1,7/1}{\Xfeature{\j}{\sgn}}
    \end{scope}
    \begin{scope}[shift={(0,1,0)},y={(1.08253cm,-.625cm)}]
      \foreach \j/\sgn in {0/-1,1/-1,2/-1,3/-1,4/-1,5/-1,6/-1}{\Yfeature{\j}{\sgn}}
    \end{scope}
    % The dotted outline records the removed front walls.
    \draw[ghost]
      (0,0,0)--(0,0,8)--(0,1,8)--(0,1,0)--cycle
      (0,0,0)--(1,0,0)--(1,0,8)--(0,0,8)
      (0,1,8)--(1,1,8)--(1,0,8)
      (1,0,0)--(1,1,0)--(0,1,0)
      (1,1,0)--(1,1,8);

  \end{scope}
  \node at (2.1,11.75) {$\mathcal P_4$};
  \node at (0,-.47) {(a) front};
  \node at (4.2,-.47) {(b) rear faces};
\end{tikzpicture}%
}%
\endgroup
  \caption{Two views of $\mathcal{P}_4$.}
  \label{fig:p4}
  \end{minipage}
\end{figure}

\paragraph*{$\mathcal{P}_5$: H=2, h=8.}
Tile description:
\[
\begin{aligned}
&((-1,-1,0,0), (0,0,1,1)^2, (-1,-1,0,0), (0,-1,0,0),\\
&\qquad (0,0,1,1), (1,0,0,1), (0,1,0,0)).
\end{aligned}
\]

This tile has $d = 4.$

\paragraph*{$\mathcal{P}_6$: H=2, h=8.}
Tile description:
\[
\begin{aligned}
&((-1,0,0,0), (-1,1,0,0), (0,1,0,0), (0,0,1,0),\\
&\qquad (0,0,1,-1), (0,0,0,-1), (-1,0,0,0), (-1,1,0,0)).
\end{aligned}
\]

This tile has $d = 1.$

\begin{figure}[!htb]
  \centering
  \begin{minipage}[t]{.48\linewidth}
    \centering
  \begingroup
% Input-ready figure for p5; include inside a figure environment.
\definecolor{pigment}{HTML}{53677A}
\colorlet{bodyLeft}{pigment!52}
\colorlet{bodyRight}{pigment!36}
\colorlet{bodyTop}{pigment!44}
\colorlet{bumpA}{pigment!72}
\colorlet{bumpB}{pigment!62}
\colorlet{bumpC}{pigment!45}
\colorlet{bumpD}{pigment!68}
\colorlet{nickA}{pigment!27}
\colorlet{nickB}{pigment!39}
\colorlet{nickC}{pigment!17}
\colorlet{nickD}{pigment!25}
\colorlet{floorFar}{pigment!65}
\colorlet{floorNear}{pigment!28}
\tikzset{edge/.style={draw=black!80,line width=.45pt,line join=round},
  hidden/.style={draw=black!43,line width=.30pt,
    dash pattern=on 1.7pt off 1.5pt,line join=round},
  ghost/.style={draw=black!42,line width=.32pt,
    dash pattern=on .7pt off 1.45pt,line join=round}}
% Square-pyramidal base .30 by .30, apex depth .09.
% Feature on x=0, viewed from negative x and negative y.
\newcommand{\Xfeature}[2]{%
  \pgfmathsetmacro{\lo}{#1+.35}
  \pgfmathsetmacro{\mid}{#1+.5}
  \pgfmathsetmacro{\hi}{#1+.65}
  \pgfmathsetmacro{\dep}{-.09*(#2)}
  \ifnum#2=1
    \path[edge,fill=bumpA] (0,.35,\lo)--(0,.65,\lo)--(\dep,.5,\mid)--cycle;
    \path[edge,fill=bumpB] (0,.65,\lo)--(0,.65,\hi)--(\dep,.5,\mid)--cycle;
    \path[edge,fill=bumpC] (0,.65,\hi)--(0,.35,\hi)--(\dep,.5,\mid)--cycle;
    \path[edge,fill=bumpD] (0,.35,\hi)--(0,.35,\lo)--(\dep,.5,\mid)--cycle;
  \else
    \pgfmathsetmacro{\lo}{#1+.35}
    \pgfmathsetmacro{\hi}{#1+.65}
    \def\dep{.09}
    \begin{scope}
      \clip (0,.35,\lo)--(0,.65,\lo)--(0,.65,\hi)--(0,.35,\hi)--cycle;
      \path[fill=nickA] (0,.35,\lo)--(0,.65,\lo)--(\dep,.5,\mid)--cycle;
      \path[fill=nickB] (0,.65,\lo)--(0,.65,\hi)--(\dep,.5,\mid)--cycle;
      \path[fill=nickC] (0,.65,\hi)--(0,.35,\hi)--(\dep,.5,\mid)--cycle;
      \path[fill=nickD] (0,.35,\hi)--(0,.35,\lo)--(\dep,.5,\mid)--cycle;
      \draw[black!65,line width=.3pt] (\dep,.5,\mid)--(0,.35,\lo)
        (\dep,.5,\mid)--(0,.65,\lo)
        (\dep,.5,\mid)--(0,.65,\hi)
        (\dep,.5,\mid)--(0,.35,\hi);
    \end{scope}
    \draw[edge] (0,.35,\lo)--(0,.65,\lo)--(0,.65,\hi)--(0,.35,\hi)--cycle;
  \fi
}
% Feature on y=0.
\newcommand{\Yfeature}[2]{%
  \pgfmathsetmacro{\lo}{#1+.35}
  \pgfmathsetmacro{\mid}{#1+.5}
  \pgfmathsetmacro{\hi}{#1+.65}
  \pgfmathsetmacro{\dep}{-.09*(#2)}
  \ifnum#2=1
    \path[edge,fill=bumpA] (.35,0,\lo)--(.65,0,\lo)--(.5,\dep,\mid)--cycle;
    \path[edge,fill=bumpB] (.65,0,\lo)--(.65,0,\hi)--(.5,\dep,\mid)--cycle;
    \path[edge,fill=bumpC] (.65,0,\hi)--(.35,0,\hi)--(.5,\dep,\mid)--cycle;
    \path[edge,fill=bumpD] (.35,0,\hi)--(.35,0,\lo)--(.5,\dep,\mid)--cycle;
  \else
    \pgfmathsetmacro{\lo}{#1+.35}
    \pgfmathsetmacro{\hi}{#1+.65}
    \def\dep{.09}
    \begin{scope}
      \clip (.35,0,\lo)--(.65,0,\lo)--(.65,0,\hi)--(.35,0,\hi)--cycle;
      \path[fill=nickA] (.35,0,\lo)--(.65,0,\lo)--(.5,\dep,\mid)--cycle;
      \path[fill=nickB] (.65,0,\lo)--(.65,0,\hi)--(.5,\dep,\mid)--cycle;
      \path[fill=nickC] (.65,0,\hi)--(.35,0,\hi)--(.5,\dep,\mid)--cycle;
      \path[fill=nickD] (.35,0,\hi)--(.35,0,\lo)--(.5,\dep,\mid)--cycle;
      \draw[black!65,line width=.3pt] (.5,\dep,\mid)--(.35,0,\lo)
        (.5,\dep,\mid)--(.65,0,\lo)
        (.5,\dep,\mid)--(.65,0,\hi)
        (.5,\dep,\mid)--(.35,0,\hi);
    \end{scope}
    \draw[edge] (.35,0,\lo)--(.65,0,\lo)--(.65,0,\hi)--(.35,0,\hi)--cycle;
  \fi
}

\newcommand{\prismbody}{%
  \path[edge,fill=bodyLeft] (0,0,0)--(0,1,0)--(0,1,8)--(0,0,8)--cycle;
  \path[edge,fill=bodyRight] (0,0,0)--(1,0,0)--(1,0,8)--(0,0,8)--cycle;
  \path[edge,fill=bodyTop] (0,0,8)--(1,0,8)--(1,1,8)--(0,1,8)--cycle;
  \foreach \level in {1,2,3,4,5,6,7}{%
    \draw[hidden] (0,1,\level)--(0,0,\level)--(1,0,\level);
  }
}

\resizebox{.715\linewidth}{!}{%
\begin{tikzpicture}[font=\small]
  \begin{scope}[x={(1.08253cm,.625cm)},y={(-1.08253cm,.625cm)},z={(0cm,1.25cm)}]
    \prismbody
    \foreach \j/\sgn in {1/1,2/1,5/1,6/1}{\Xfeature{\j}{\sgn}}
    \foreach \j/\sgn in {0/-1,3/-1,6/1}{\Yfeature{\j}{\sgn}}
  \end{scope}
  \begin{scope}[shift={(4.2cm,0)},
      x={(1.08253cm,.625cm)},y={(-1.08253cm,.625cm)},z={(0cm,1.25cm)}]
    % The bottom face is the floor of the open shell.
    \shade[draw=black!70,line width=.45pt,top color=floorFar,bottom color=floorNear] (0,0,0)--(1,0,0)--(1,1,0)--(0,1,0)--cycle;
    % Actual rear faces: x=1 and y=1.
    \path[edge,fill=bodyRight] (1,0,0)--(1,1,0)--(1,1,8)--(1,0,8)--cycle;
    \path[edge,fill=bodyLeft] (0,1,0)--(1,1,0)--(1,1,8)--(0,1,8)--cycle;
    \path[fill=bodyTop,opacity=.3]
      (0,0,8)--(1,0,8)--(1,1,8)--(0,1,8)--cycle;
    \foreach \level in {1,2,3,4,5,6,7}{%
      \draw[hidden] (1,0,\level)--(1,1,\level)--(0,1,\level);
    }
    % Mirrored coordinates draw features on the physical rear faces.
    \begin{scope}[shift={(1,0,0)},x={(-1.08253cm,-.625cm)}]
      \foreach \j/\sgn in {0/-1,3/-1,4/-1,7/1}{\Xfeature{\j}{\sgn}}
    \end{scope}
    \begin{scope}[shift={(0,1,0)},y={(1.08253cm,-.625cm)}]
      \foreach \j/\sgn in {1/1,2/1,5/1}{\Yfeature{\j}{\sgn}}
    \end{scope}
    % The dotted outline records the removed front walls.
    \draw[ghost]
      (0,0,0)--(0,0,8)--(0,1,8)--(0,1,0)--cycle
      (0,0,0)--(1,0,0)--(1,0,8)--(0,0,8)
      (0,1,8)--(1,1,8)--(1,0,8)
      (1,0,0)--(1,1,0)--(0,1,0)
      (1,1,0)--(1,1,8);

  \end{scope}
  \node at (2.1,11.75) {$\mathcal P_5$};
  \node at (0,-.47) {(a) front};
  \node at (4.2,-.47) {(b) rear faces};
\end{tikzpicture}%
}%
\endgroup
  \caption{Two views of $\mathcal{P}_5$.}
  \label{fig:p5}
  \end{minipage}%
  \hfill
  \begin{minipage}[t]{.48\linewidth}
    \centering
  \begingroup
% Input-ready figure for p6; include inside a figure environment.
\definecolor{pigment}{HTML}{53677A}
\colorlet{bodyLeft}{pigment!52}
\colorlet{bodyRight}{pigment!36}
\colorlet{bodyTop}{pigment!44}
\colorlet{bumpA}{pigment!72}
\colorlet{bumpB}{pigment!62}
\colorlet{bumpC}{pigment!45}
\colorlet{bumpD}{pigment!68}
\colorlet{nickA}{pigment!27}
\colorlet{nickB}{pigment!39}
\colorlet{nickC}{pigment!17}
\colorlet{nickD}{pigment!25}
\colorlet{floorFar}{pigment!65}
\colorlet{floorNear}{pigment!28}
\tikzset{edge/.style={draw=black!80,line width=.45pt,line join=round},
  hidden/.style={draw=black!43,line width=.30pt,
    dash pattern=on 1.7pt off 1.5pt,line join=round},
  ghost/.style={draw=black!42,line width=.32pt,
    dash pattern=on .7pt off 1.45pt,line join=round}}
% Square-pyramidal base .30 by .30, apex depth .09.
% Feature on x=0, viewed from negative x and negative y.
\newcommand{\Xfeature}[2]{%
  \pgfmathsetmacro{\lo}{#1+.35}
  \pgfmathsetmacro{\mid}{#1+.5}
  \pgfmathsetmacro{\hi}{#1+.65}
  \pgfmathsetmacro{\dep}{-.09*(#2)}
  \ifnum#2=1
    \path[edge,fill=bumpA] (0,.35,\lo)--(0,.65,\lo)--(\dep,.5,\mid)--cycle;
    \path[edge,fill=bumpB] (0,.65,\lo)--(0,.65,\hi)--(\dep,.5,\mid)--cycle;
    \path[edge,fill=bumpC] (0,.65,\hi)--(0,.35,\hi)--(\dep,.5,\mid)--cycle;
    \path[edge,fill=bumpD] (0,.35,\hi)--(0,.35,\lo)--(\dep,.5,\mid)--cycle;
  \else
    \pgfmathsetmacro{\lo}{#1+.35}
    \pgfmathsetmacro{\hi}{#1+.65}
    \def\dep{.09}
    \begin{scope}
      \clip (0,.35,\lo)--(0,.65,\lo)--(0,.65,\hi)--(0,.35,\hi)--cycle;
      \path[fill=nickA] (0,.35,\lo)--(0,.65,\lo)--(\dep,.5,\mid)--cycle;
      \path[fill=nickB] (0,.65,\lo)--(0,.65,\hi)--(\dep,.5,\mid)--cycle;
      \path[fill=nickC] (0,.65,\hi)--(0,.35,\hi)--(\dep,.5,\mid)--cycle;
      \path[fill=nickD] (0,.35,\hi)--(0,.35,\lo)--(\dep,.5,\mid)--cycle;
      \draw[black!65,line width=.3pt] (\dep,.5,\mid)--(0,.35,\lo)
        (\dep,.5,\mid)--(0,.65,\lo)
        (\dep,.5,\mid)--(0,.65,\hi)
        (\dep,.5,\mid)--(0,.35,\hi);
    \end{scope}
    \draw[edge] (0,.35,\lo)--(0,.65,\lo)--(0,.65,\hi)--(0,.35,\hi)--cycle;
  \fi
}
% Feature on y=0.
\newcommand{\Yfeature}[2]{%
  \pgfmathsetmacro{\lo}{#1+.35}
  \pgfmathsetmacro{\mid}{#1+.5}
  \pgfmathsetmacro{\hi}{#1+.65}
  \pgfmathsetmacro{\dep}{-.09*(#2)}
  \ifnum#2=1
    \path[edge,fill=bumpA] (.35,0,\lo)--(.65,0,\lo)--(.5,\dep,\mid)--cycle;
    \path[edge,fill=bumpB] (.65,0,\lo)--(.65,0,\hi)--(.5,\dep,\mid)--cycle;
    \path[edge,fill=bumpC] (.65,0,\hi)--(.35,0,\hi)--(.5,\dep,\mid)--cycle;
    \path[edge,fill=bumpD] (.35,0,\hi)--(.35,0,\lo)--(.5,\dep,\mid)--cycle;
  \else
    \pgfmathsetmacro{\lo}{#1+.35}
    \pgfmathsetmacro{\hi}{#1+.65}
    \def\dep{.09}
    \begin{scope}
      \clip (.35,0,\lo)--(.65,0,\lo)--(.65,0,\hi)--(.35,0,\hi)--cycle;
      \path[fill=nickA] (.35,0,\lo)--(.65,0,\lo)--(.5,\dep,\mid)--cycle;
      \path[fill=nickB] (.65,0,\lo)--(.65,0,\hi)--(.5,\dep,\mid)--cycle;
      \path[fill=nickC] (.65,0,\hi)--(.35,0,\hi)--(.5,\dep,\mid)--cycle;
      \path[fill=nickD] (.35,0,\hi)--(.35,0,\lo)--(.5,\dep,\mid)--cycle;
      \draw[black!65,line width=.3pt] (.5,\dep,\mid)--(.35,0,\lo)
        (.5,\dep,\mid)--(.65,0,\lo)
        (.5,\dep,\mid)--(.65,0,\hi)
        (.5,\dep,\mid)--(.35,0,\hi);
    \end{scope}
    \draw[edge] (.35,0,\lo)--(.65,0,\lo)--(.65,0,\hi)--(.35,0,\hi)--cycle;
  \fi
}

\newcommand{\prismbody}{%
  \path[edge,fill=bodyLeft] (0,0,0)--(0,1,0)--(0,1,8)--(0,0,8)--cycle;
  \path[edge,fill=bodyRight] (0,0,0)--(1,0,0)--(1,0,8)--(0,0,8)--cycle;
  \path[edge,fill=bodyTop] (0,0,8)--(1,0,8)--(1,1,8)--(0,1,8)--cycle;
  \foreach \level in {1,2,3,4,5,6,7}{%
    \draw[hidden] (0,1,\level)--(0,0,\level)--(1,0,\level);
  }
}

\resizebox{.715\linewidth}{!}{%
\begin{tikzpicture}[font=\small]
  \begin{scope}[x={(1.08253cm,.625cm)},y={(-1.08253cm,.625cm)},z={(0cm,1.25cm)}]
    \prismbody
    \foreach \j/\sgn in {4/-1,5/-1}{\Xfeature{\j}{\sgn}}
    \foreach \j/\sgn in {0/-1,1/-1,6/-1,7/-1}{\Yfeature{\j}{\sgn}}
  \end{scope}
  \begin{scope}[shift={(4.2cm,0)},
      x={(1.08253cm,.625cm)},y={(-1.08253cm,.625cm)},z={(0cm,1.25cm)}]
    % The bottom face is the floor of the open shell.
    \shade[draw=black!70,line width=.45pt,top color=floorFar,bottom color=floorNear] (0,0,0)--(1,0,0)--(1,1,0)--(0,1,0)--cycle;
    % Actual rear faces: x=1 and y=1.
    \path[edge,fill=bodyRight] (1,0,0)--(1,1,0)--(1,1,8)--(1,0,8)--cycle;
    \path[edge,fill=bodyLeft] (0,1,0)--(1,1,0)--(1,1,8)--(0,1,8)--cycle;
    \path[fill=bodyTop,opacity=.3]
      (0,0,8)--(1,0,8)--(1,1,8)--(0,1,8)--cycle;
    \foreach \level in {1,2,3,4,5,6,7}{%
      \draw[hidden] (1,0,\level)--(1,1,\level)--(0,1,\level);
    }
    % Mirrored coordinates draw features on the physical rear faces.
    \begin{scope}[shift={(1,0,0)},x={(-1.08253cm,-.625cm)}]
      \foreach \j/\sgn in {1/1,2/1,7/1}{\Xfeature{\j}{\sgn}}
    \end{scope}
    \begin{scope}[shift={(0,1,0)},y={(1.08253cm,-.625cm)}]
      \foreach \j/\sgn in {3/1,4/1}{\Yfeature{\j}{\sgn}}
    \end{scope}
    % The dotted outline records the removed front walls.
    \draw[ghost]
      (0,0,0)--(0,0,8)--(0,1,8)--(0,1,0)--cycle
      (0,0,0)--(1,0,0)--(1,0,8)--(0,0,8)
      (0,1,8)--(1,1,8)--(1,0,8)
      (1,0,0)--(1,1,0)--(0,1,0)
      (1,1,0)--(1,1,8);

  \end{scope}
  \node at (2.1,11.75) {$\mathcal P_6$};
  \node at (0,-.47) {(a) front};
  \node at (4.2,-.47) {(b) rear faces};
\end{tikzpicture}%
}%
\endgroup
  \caption{Two views of $\mathcal{P}_6$.}
  \label{fig:p6}
  \end{minipage}
\end{figure}

\paragraph*{$2\mathcal{P}_1$: H=2, h=8.}
Tile description:
\[
((-1,-1,1,1)^4, (-1,-1,-1,1)^2, (1,-1,-1,1)^2)
\]

This tile has $d = 4.$

\begin{figure}[!htb]
  \centering
  \begingroup
% Input-ready figure for 2p1; include inside a figure environment.
\definecolor{pigment}{HTML}{53677A}
\colorlet{bodyLeft}{pigment!52}
\colorlet{bodyRight}{pigment!36}
\colorlet{bodyTop}{pigment!44}
\colorlet{bumpA}{pigment!72}
\colorlet{bumpB}{pigment!62}
\colorlet{bumpC}{pigment!45}
\colorlet{bumpD}{pigment!68}
\colorlet{nickA}{pigment!27}
\colorlet{nickB}{pigment!39}
\colorlet{nickC}{pigment!17}
\colorlet{nickD}{pigment!25}
\colorlet{floorFar}{pigment!65}
\colorlet{floorNear}{pigment!28}
\tikzset{edge/.style={draw=black!80,line width=.45pt,line join=round},
  hidden/.style={draw=black!43,line width=.30pt,
    dash pattern=on 1.7pt off 1.5pt,line join=round},
  ghost/.style={draw=black!42,line width=.32pt,
    dash pattern=on .7pt off 1.45pt,line join=round}}
% Square-pyramidal base .30 by .30, apex depth .09.
% Feature on x=0, viewed from negative x and negative y.
\newcommand{\Xfeature}[2]{%
  \pgfmathsetmacro{\lo}{#1+.35}
  \pgfmathsetmacro{\mid}{#1+.5}
  \pgfmathsetmacro{\hi}{#1+.65}
  \pgfmathsetmacro{\dep}{-.09*(#2)}
  \ifnum#2=1
    \path[edge,fill=bumpA] (0,.35,\lo)--(0,.65,\lo)--(\dep,.5,\mid)--cycle;
    \path[edge,fill=bumpB] (0,.65,\lo)--(0,.65,\hi)--(\dep,.5,\mid)--cycle;
    \path[edge,fill=bumpC] (0,.65,\hi)--(0,.35,\hi)--(\dep,.5,\mid)--cycle;
    \path[edge,fill=bumpD] (0,.35,\hi)--(0,.35,\lo)--(\dep,.5,\mid)--cycle;
  \else
    \pgfmathsetmacro{\lo}{#1+.35}
    \pgfmathsetmacro{\hi}{#1+.65}
    \def\dep{.09}
    \begin{scope}
      \clip (0,.35,\lo)--(0,.65,\lo)--(0,.65,\hi)--(0,.35,\hi)--cycle;
      \path[fill=nickA] (0,.35,\lo)--(0,.65,\lo)--(\dep,.5,\mid)--cycle;
      \path[fill=nickB] (0,.65,\lo)--(0,.65,\hi)--(\dep,.5,\mid)--cycle;
      \path[fill=nickC] (0,.65,\hi)--(0,.35,\hi)--(\dep,.5,\mid)--cycle;
      \path[fill=nickD] (0,.35,\hi)--(0,.35,\lo)--(\dep,.5,\mid)--cycle;
      \draw[black!65,line width=.3pt] (\dep,.5,\mid)--(0,.35,\lo)
        (\dep,.5,\mid)--(0,.65,\lo)
        (\dep,.5,\mid)--(0,.65,\hi)
        (\dep,.5,\mid)--(0,.35,\hi);
    \end{scope}
    \draw[edge] (0,.35,\lo)--(0,.65,\lo)--(0,.65,\hi)--(0,.35,\hi)--cycle;
  \fi
}
% Feature on y=0.
\newcommand{\Yfeature}[2]{%
  \pgfmathsetmacro{\lo}{#1+.35}
  \pgfmathsetmacro{\mid}{#1+.5}
  \pgfmathsetmacro{\hi}{#1+.65}
  \pgfmathsetmacro{\dep}{-.09*(#2)}
  \ifnum#2=1
    \path[edge,fill=bumpA] (.35,0,\lo)--(.65,0,\lo)--(.5,\dep,\mid)--cycle;
    \path[edge,fill=bumpB] (.65,0,\lo)--(.65,0,\hi)--(.5,\dep,\mid)--cycle;
    \path[edge,fill=bumpC] (.65,0,\hi)--(.35,0,\hi)--(.5,\dep,\mid)--cycle;
    \path[edge,fill=bumpD] (.35,0,\hi)--(.35,0,\lo)--(.5,\dep,\mid)--cycle;
  \else
    \pgfmathsetmacro{\lo}{#1+.35}
    \pgfmathsetmacro{\hi}{#1+.65}
    \def\dep{.09}
    \begin{scope}
      \clip (.35,0,\lo)--(.65,0,\lo)--(.65,0,\hi)--(.35,0,\hi)--cycle;
      \path[fill=nickA] (.35,0,\lo)--(.65,0,\lo)--(.5,\dep,\mid)--cycle;
      \path[fill=nickB] (.65,0,\lo)--(.65,0,\hi)--(.5,\dep,\mid)--cycle;
      \path[fill=nickC] (.65,0,\hi)--(.35,0,\hi)--(.5,\dep,\mid)--cycle;
      \path[fill=nickD] (.35,0,\hi)--(.35,0,\lo)--(.5,\dep,\mid)--cycle;
      \draw[black!65,line width=.3pt] (.5,\dep,\mid)--(.35,0,\lo)
        (.5,\dep,\mid)--(.65,0,\lo)
        (.5,\dep,\mid)--(.65,0,\hi)
        (.5,\dep,\mid)--(.35,0,\hi);
    \end{scope}
    \draw[edge] (.35,0,\lo)--(.65,0,\lo)--(.65,0,\hi)--(.35,0,\hi)--cycle;
  \fi
}

\newcommand{\prismbody}{%
  \path[edge,fill=bodyLeft] (0,0,0)--(0,1,0)--(0,1,8)--(0,0,8)--cycle;
  \path[edge,fill=bodyRight] (0,0,0)--(1,0,0)--(1,0,8)--(0,0,8)--cycle;
  \path[edge,fill=bodyTop] (0,0,8)--(1,0,8)--(1,1,8)--(0,1,8)--cycle;
  \foreach \level in {1,2,3,4,5,6,7}{%
    \draw[hidden] (0,1,\level)--(0,0,\level)--(1,0,\level);
  }
}

\resizebox{.486\linewidth}{!}{%
\begin{tikzpicture}[font=\small]
  \begin{scope}[x={(1.08253cm,.625cm)},y={(-1.08253cm,.625cm)},z={(0cm,1.25cm)}]
    \prismbody
    \foreach \j/\sgn in {0/1,1/1,2/1,3/1,4/1,5/1,6/1,7/1}{\Xfeature{\j}{\sgn}}
    \foreach \j/\sgn in {0/-1,1/-1,2/-1,3/-1,4/-1,5/-1,6/1,7/1}{\Yfeature{\j}{\sgn}}
  \end{scope}
  \begin{scope}[shift={(4.2cm,0)},
      x={(1.08253cm,.625cm)},y={(-1.08253cm,.625cm)},z={(0cm,1.25cm)}]
    % The bottom face is the floor of the open shell.
    \shade[draw=black!70,line width=.45pt,top color=floorFar,bottom color=floorNear] (0,0,0)--(1,0,0)--(1,1,0)--(0,1,0)--cycle;
    % Actual rear faces: x=1 and y=1.
    \path[edge,fill=bodyRight] (1,0,0)--(1,1,0)--(1,1,8)--(1,0,8)--cycle;
    \path[edge,fill=bodyLeft] (0,1,0)--(1,1,0)--(1,1,8)--(0,1,8)--cycle;
    \path[fill=bodyTop,opacity=.3]
      (0,0,8)--(1,0,8)--(1,1,8)--(0,1,8)--cycle;
    \foreach \level in {1,2,3,4,5,6,7}{%
      \draw[hidden] (1,0,\level)--(1,1,\level)--(0,1,\level);
    }
    % Mirrored coordinates draw features on the physical rear faces.
    \begin{scope}[shift={(1,0,0)},x={(-1.08253cm,-.625cm)}]
      \foreach \j/\sgn in {0/-1,1/-1,2/-1,3/-1,4/-1,5/-1,6/-1,7/-1}{\Xfeature{\j}{\sgn}}
    \end{scope}
    \begin{scope}[shift={(0,1,0)},y={(1.08253cm,-.625cm)}]
      \foreach \j/\sgn in {0/1,1/1,2/1,3/1,4/-1,5/-1,6/-1,7/-1}{\Yfeature{\j}{\sgn}}
    \end{scope}
    % The dotted outline records the removed front walls.
    \draw[ghost]
      (0,0,0)--(0,0,8)--(0,1,8)--(0,1,0)--cycle
      (0,0,0)--(1,0,0)--(1,0,8)--(0,0,8)
      (0,1,8)--(1,1,8)--(1,0,8)
      (1,0,0)--(1,1,0)--(0,1,0)
      (1,1,0)--(1,1,8);

  \end{scope}
  \node at (2.1,11.75) {$2\mathcal P_1$};
  \node at (0,-.47) {(a) front};
  \node at (4.2,-.47) {(b) rear faces};
\end{tikzpicture}%
}%
\endgroup
  \caption{Two views of $2\mathcal{P}_1$.}
  \label{fig:2p1}
\end{figure}

This prism can actually be obtained by "doubling" the first "$\textbf{H=2, h=4}$" example (after a suitable transformation of the prism). Here by doubling we mean that each layer has been doubled, or repeated twice (raised to second power in canonical description). 

Generalizing this, we introduce the following definition:

\begin{defn}
    We say that a prism $p\mathcal{H}$ is obtained by \textit{p-fattening} prism $\mathcal{H}$ if its canonical description is equal to that of $\mathcal{H}$ with every quadruple raised to the $p^{th}$ power.
\end{defn}

\begin{figure}[htbp]
  \centering
  \includegraphics[width=.72\linewidth]{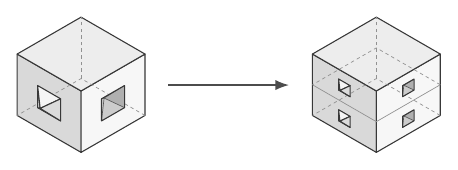}
  \caption{One height-one layer (left) and its $2$-fattening into two layers of height $\frac12$ each (right).}
  \label{fig:layer-doubling}
\end{figure}

This finding gave way to the following, easy, proposition.

\begin{prop}
    If a notched disbalanced prism $\mathcal{H}$ has Heesch number $k,$ then a prism that is obtained by $p$-fattening $\mathcal{H},$ $p\mathcal{H}$, has a finite Heesch number that is at least $k.$
\end{prop}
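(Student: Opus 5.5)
Two claims need proof: that $p\mathcal{H}$ has a \emph{finite} Heesch number, and that this number is \emph{at least} $k$.

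\textbf{Finiteness.} We would reduce it to the imbalance (Amman-type) argument from Section~\ref{overview}, now in three dimensions. Every quadruple of $\mathcal{H}$ is repeated $p$ times in $p\mathcal{H}$. Hence the excess of nicks over bumps in $p\mathcal{H}$ is $p\cdot d>0$, so $p\mathcal{H}$ is still disbalanced. The steps are:
\begin{itemize}
\item Arrange, exactly as in the proof of Theorem~\ref{thm1}, that all tiles in any corona are vertically oriented and occupy the square grid horizontally. Vertical offsets are then forced to be multiples of the layer height $\frac1p$ wherever bumps meet nicks.
\item Observe that touching faces carry matched bumps and nicks, so the excess nicks must all lie on the outer boundary of the patch.
\item Note that an $n$-corona patch contains $\Theta(n^3)$ prisms but has boundary area only $O(n^2)$. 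This uses the same kind of geometric growth estimate as Statement~\ref{stmt3}, lifted to $\mathbb{E}^3$.
\end{itemize}
The resulting contradiction for large $n$ bounds the Heesch number. Since $\mathcal{H}$ is assumed to have finite Heesch number $k$ for exactly this disbalance reason, the same constants work up to a factor depending on $p$.

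\textbf{Lower bound.} We would transport the coronas of $\mathcal{H}$ to $p\mathcal{H}$. Take $k$ coronas $\mathcal{C}_1,\dots,\mathcal{C}_k$ around $\mathcal{H}$, whose copies sit at horizontal grid positions and at vertical offsets $t$. Replace each copy $g(\mathcal{H})+(m\ell,n\ell,t)$ by $g(p\mathcal{H})+(m\ell,n\ell,t)$. Here $p\mathcal{H}$ has the same total height, and each original unit layer is split into $p$ sublayers of height $\frac1p$ carrying the same face pattern (Figure~\ref{fig:layer-doubling}). The map sending $\mathcal{H}$ to $p\mathcal{H}$ preserves the square base and commutes with every $g$ and every translation.

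It remains to check that the new collection is a valid $k$-fold surrounding. Disjointness of interiors and the covering and containment conditions hold because the underlying boxes are identical to those of the original configuration. The only new issue is the faces: we must show that wherever two faces of $\mathcal{H}$ were in contact, the corresponding faces of $p\mathcal{H}$ are compatible.

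\textbf{Main obstacle.} The hard step is when the original contact occurs between faces whose vertical offset $t$ is a non-integer, or when a contact is partial. In $\mathcal{H}$, bumps and nicks sit at the centres of unit layers. A bump of $\mathcal{H}$ can therefore only meet a nick or a flat region. After fattening, the features sit at the centres of the sublayers. Two cases then arise:
\begin{itemize}
\item If the contact offset was an integer, every sublayer pairs with the corresponding sublayer, and bump--nick matching is inherited directly.
\item If the offset was not an integer, the original contact must have been flat against flat wherever features could collide. We would try to show that the corresponding fattened faces are also flat there, or that their features avoid each other.
\end{itemize}
We would first try to argue that in any valid corona of a notched disbalanced prism, contacts along faces carrying features force integer offsets. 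If that fails, we would fall back on choosing the feature size small enough that non-aligned features never overlap.

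Finally, the holes allowed by our definition of a corona are inherited unchanged, so the $k$ coronas persist and the Heesch number of $p\mathcal{H}$ is at least $k$.
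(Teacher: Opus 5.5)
Your two-part plan is the paper's own. Finiteness comes from the Amman-type count: each copy of $p\mathcal{H}$ carries an imbalance of $pd$ between bumps and nicks, against $\Theta(n^3)$ tiles and $O(n^2)$ boundary. The lower bound comes from taking a $k$-fold surrounding of $\mathcal{H}$ and splitting every layer into $p$ sublayers, each carrying a smaller copy of its bump or nick. The paper simply asserts that the split configuration is a surrounding of $p\mathcal{H}$, so it never treats the offset issue you raise. Your proposal does not close that issue either: you leave it open, and neither of your two fallbacks works as stated.

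Your first route is sound for \emph{matched} features. A bump or nick of a tile in $\mathcal{C}_0,\dots,\mathcal{C}_{k-1}$ must be filled by an aligned feature of a neighbour at integer relative offset, since otherwise part of the tile's boundary is not interior to the next union. That neighbour then occupies the full height of the layer, so a neighbour at non-integer offset meets only flat stretches of such layers, and these stay flat after splitting. However, tiles of the last corona $\mathcal{C}_k$ may carry \emph{unmatched} features, and this is exactly where splitting can fail. Suppose $A\in\mathcal{C}_k$ has a bump on its layer $i$ (heights $[i,i+1]$ in $A$'s coordinates) that faces empty space. Suppose also that a neighbour $B\in\mathcal{C}_k$ at non-integer relative offset has its bottom at a height $t\in(i,i+1)$ just above that bump. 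This is allowed in a surrounding of $\mathcal{H}$, since neither tile has to lie in the interior of the union. After splitting, the topmost sub-bump of layer $i$ is centred at $i+1-\frac{1}{2p}\ge i+\frac34$ and reaches higher than the original bump did. So if $t$ lies between the two tops, it runs into the flat lower part of $B$. Similarly, an unmatched nick can, after splitting, open a pocket on the face of a neighbour in $\mathcal{C}_{k-1}$, which violates the interior condition. Hence your claim that feature-carrying contacts force integer offsets is false in the last corona.

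Shrinking the features does not rescue the argument. The collision is a sub-bump against a flat face, which happens at every feature size. Moreover, the feature size is part of the tile $p\mathcal{H}$, not a parameter you may choose. To complete the lower bound you need a separate argument that disposes of unmatched features of $\mathcal{C}_k$ lying next to non-integer neighbours. One option would be to show that the surrounding of $\mathcal{H}$ can be chosen, or its last corona modified, so that all vertical offsets are integers; your transport is then immediate.
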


\begin{proof}
    To see why this is true, notice that $p\mathcal{H}$ is also disbalanced: indeed, if the disbalance (difference really) between the bumps and nicks in $\mathcal{H}$ is $d,$ then the disbalance between bumps and nicks in $p\mathcal{H}$ will be $pd.$ And using an argument analogous to the one first introduced by Amman (see Section \ref{overview}), we have that it has a finite Heesch number (important to note here is that the growth rate of the boundary of the coronas of a square prism is quadratic, while the volume is cubic, which is an easy and tedious task which we left to the reader).
    That the Heesch number of $p\mathcal{H}$ is at least $k$ follows from a simple observation that in some optimal surrounding of $\mathcal{H}$ we can just subdivide each layer into $p$ layers, by imagining bumps and nicks to consist of $p$ parallel smaller copies of themselves. See Figure \ref{fig:layer-doubling} for case $p=2.$ After appropriate transformation this gives  us a surrounding of $p\mathcal{H},$ which finished the proof.
\end{proof}
Observe that the fattened tiles can showcase some new surrounding behaviour: indeed, we do not have to match the blown up faces exactly as they were before -- we could move some of them along the $z$-axis. This gives rise to more possibilities for surrounding such tiles and potentially greater Heesch numbers. And indeed, one can imagine that for a suitable choice of $p$ some new patterns would be possible, but we have no results to support this belief. 
% Note also that the disbalance in these tiles is bigger, which could lead one to believe that they would have even smaller Heesch numbers, but this is not so since the area also grows with the increase of the height of the tiles, and the ratio of excess bumps and the area they can accumulate on remains the same, perhaps suggesting that the Heesch number of each fattened tile should be 2.
As a direct consequence of the previous proposition, we get the following statement:

\begin{stmt}
    Each tile from 6 families of tiles that arise by fattening of the examples shown in this section has a finite Heesch number that is at least $2.$ These families are given by the canonical descriptions of their tiles as follows:
    \[
    \mathcal{F}_1 = \bigl\{((-1,-1,1,1)^p, (-1,-1,-1,1)^p, (1,-1,-1,1)^{2p}) \, : \, p \in \mathbb{N} \bigr\},
    \]
    \[
    \mathcal{F}_2 = \bigl\{((-1,-1,0,1)^p, (0,-1,-1,1)^p, (1,-1,-1,1)^{2p}) \, : \, p \in \mathbb{N} \bigr\},
    \]
    \[
    \mathcal{F}_3 = \bigl\{((-1,-1,1,1)^p, (1,-1,1,1)^{2p}, (1,-1,-1,1)^{5p}) \, : \, p \in \mathbb{N} \bigr\},
    \]
    \[
    \begin{aligned}
    \mathcal{F}_4 = \bigl\{&((-1,-1,-1,-1)^p, (-1,0,-1,-1)^p,
      (-1,1,-1,-1)^p, (0,1,-1,-1)^p,\\
      &\quad (1,1,-1,-1)^p, (1,1,-1,0)^p,
      (1,1,-1,1)^p, (1,1,0,1)^p) \, : \, p \in \mathbb{N} \bigr\},
    \end{aligned}
    \]
    \[
    \begin{aligned}
    \mathcal{F}_5 = \bigl\{&((-1,-1,0,0)^p, (0,0,1,1)^{2p},
      (-1,-1,0,0)^p, (0,-1,0,0)^p,\\
      &\quad (0,0,1,1)^p, (1,0,0,1)^p,
      (0,1,0,0)^p) \, : \, p \in \mathbb{N} \bigr\},
    \end{aligned}
    \]
    \[
    \begin{aligned}
    \mathcal{F}_6 = \bigl\{&((-1,0,0,0)^p, (-1,1,0,0)^p,
      (0,1,0,0)^p, (0,0,1,0)^p,\\
      &\quad (0,0,1,-1)^p, (0,0,0,-1)^p,
      (-1,0,0,0)^p, (-1,1,0,0)^p) \, : \, p \in \mathbb{N} \bigr\}.
    \end{aligned}
    \]
\end{stmt}

\begin{figure}[p]
  \centering
  \includegraphics[height=.92\textheight,keepaspectratio]{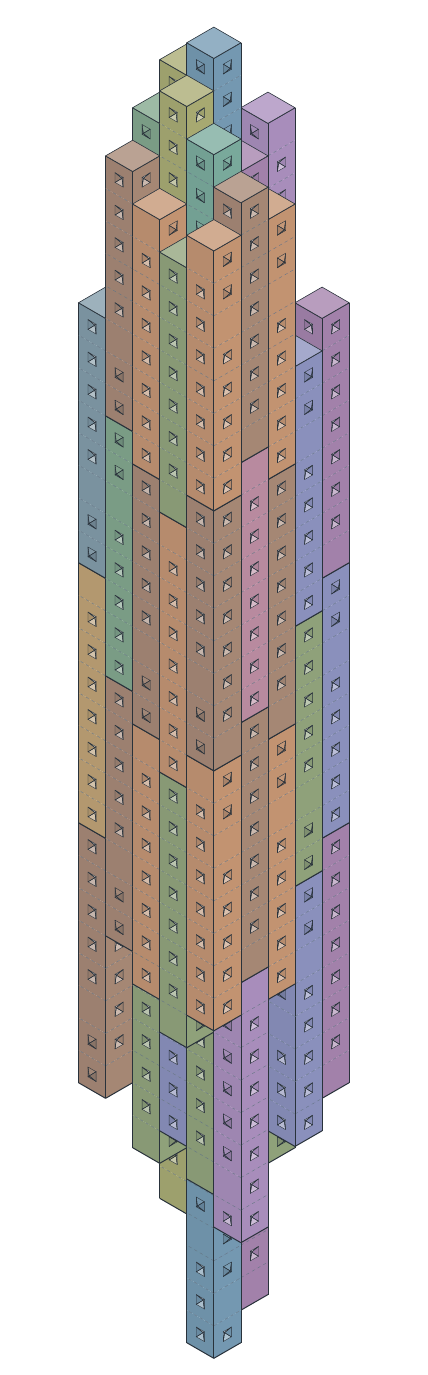}
  \caption{A complete two-corona surrounding of $\mathcal{P}_4$. Colours distinguish the orientations of its prisms, including reflected orientations.}
  \label{fig:p4-two-coronas}
\end{figure}

\begin{figure}[p]
  \centering
  \includegraphics[width=\linewidth,height=.84\textheight,keepaspectratio]{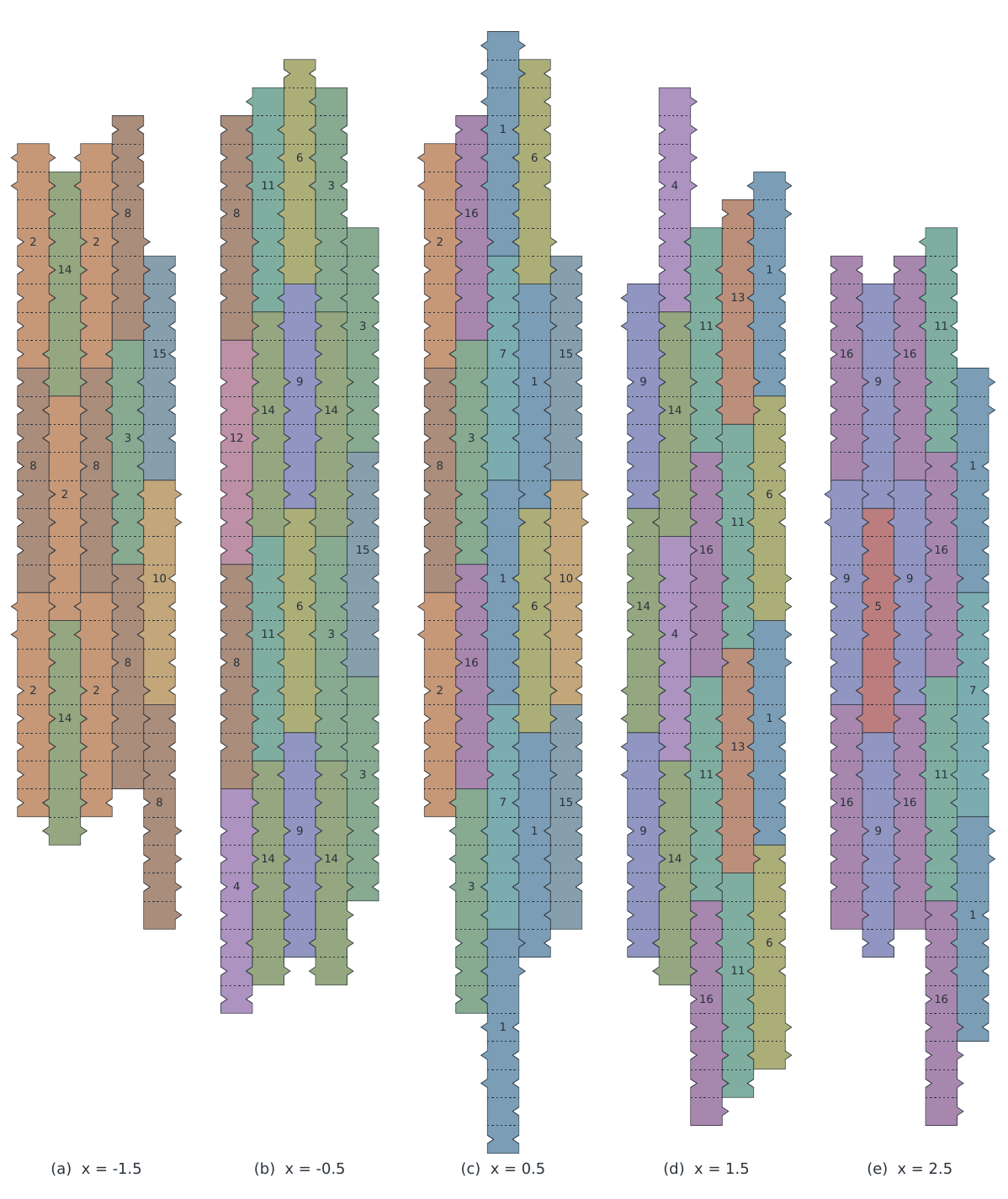}
  \caption{Five vertical $x$--sections of the same surrounding. Tile numbers $1$--$8$ indicate rotations; $9$--$16$ indicate reflected orientations.}
  \label{fig:p4-two-coronas-sections}
\end{figure}

\subsection{Negative results}
We also tried searching in a lot of specific families, which resulted in no findings, thus proving that no notched prism of a certain type exists. Most of these were motivated by the examples shown above, in hopes of generalizing them, or finding some other tiles with those specific restrictions that would have high Heesch numbers. In a few cases it will seem like the restrictions are neverending, but this was done since, as mentioned, sometimes the space of all the posibilities was too large. We list these results below:

\begin{itemize}
    \item The two "$\mathbf{H = 2, h=4}$" examples are the only such;
    \item There are no "$\mathbf{H=3, h=8}$" prisms that are disbalanced;
    \item There are no "$\mathbf{H=2, h=8}$" prisms whose each layer is non--negative (and at least one positive) -- meaning that each entry in that layer is either $0$ or $1$, and such that each layer has at least two flat sides;
    \item There are no "$\mathbf{H=3, h=16}$" prisms such that: all the even layers are mutually congruent, as well as all the odd ones, and each two consecutive layers (really, just the two height one prisms we choose for even and odd layers) can differ only on one side;
    \item There is no "$\mathbf{H=2, h=8}$" prism such that: each layer has at least two flat faces, and one of the layers can have sum equal to $-1$, while the rest are non--negative, and there are two positive layers to ensure the tile is disbalanced (one with sum $+2$ would be too hard for the program);
    \item There is no "$\mathbf{H=3, h=16}$" prism such that: no side of any layer is flat, and the whole lower half is $(1,1,-1,-1);$
    \item There is no "$\mathbf{H=3, h=16}$" prism such that one of four sides is only bumps and the opposite side is all nicks. Additionally, there are no flat sides on the other two sides;
    \item There is no "$\mathbf{H=3, h=16}$" prism such that every second layer is $(-1,-1,-1,-1);$
    \item There is no "$\mathbf{H=3, h=16}$" prism such that every second layer is $(1,1,-1,-1)$, every other layer is non--negative, and there is at least one positive layer;
    \item There are no other "stairy" "$\mathbf{H=2, h=8}$" prisms except for $\mathcal{P}_4$, where by stairy we mean that each side is increasing.
\end{itemize}

\subsection{AI disclosure}

During the preparation of this article a full solution to Heesch's problem in $\mathbb{E}^3$ was announced \cite{tilinglist}. Nudged by the necessity stemming from this sudden development, I resorted to asking ChatGPT 6 to proofread the paper, and it found a mistake in my proof of Lemma \ref{lem}. Being unable to fix it on such short notice (indeed, the majority of the paper was written in a 6--day haze), I opted to write down its proof of the same Lemma, albeit in a more presentable and simplified manner. It is thus that complete proofs (although still in some part based on the original approach by the author) of Lemmas \ref{lem} \ref{band} are completely due to AI. The proofs are considerably hard and make an important part of the paper. I would add that I was surprised by the elegance of the proofs and that I found them quite beautiful, and a valuable addition to the structural study of ratio $q$ tilings. I vouch for having checked them and claim that they are as true as proofs get. I am hopeful that in some future version of the paper I may come up with my own proofs of the said Lemmas, although the ones presented here seem to be the most elegant and simple. That said, the main idea of the paper, the new approach, and all the parts of the proofs (omitting the two mentioned lemmas), as well as the writing of the paper, are original. I acknowledge the use of ChatGPT--6 for proof checking and for grammar checks. I also acknowledge that the figures were made with the help of ChatGPT--6.

\section{Acknowledgements}

I am thankful to Bojan Ba\v si\' c for running and writing all the computational tests mentioned in the paper, as well as for some fruitful discussions during the starting stages of this paper.
My gratitude also goes to Anton Bakker for allowing me to showcase the tile shown in Figure \ref{hes6}.

This work was supported by grant GA\v{C}R 23-04949X, GAUK grant VV--2025--2608/22 and by the
Visegrad Fund.

\bibliographystyle{abbrv}
\bibliography{heesch_references}
\end{document}